\newcommand{\weakly}{\rightharpoonup}\normalfont
\newcommand{\N}{\ensuremath{\mathbb{N}}}
\newcommand{\R}{\ensuremath{\mathbb{R}}}
\newcommand{\X}{\mathbb{X}}
\newcommand{\supp}{\textnormal{supp}}
\newcommand{\dx}{\,\mathrm{d}}
\def\3{\ss}
\newcommand*\pFq[6][8]{
  \begingroup 
  \pFqmuskip=#1mu\relax
  \begingroup\lccode`\~=`\,
  \lowercase{\endgroup\let~}\pFqcomma
  {}_{#2}F_{#3}{\left(\genfrac..{0pt}{}{#4}{#5};#6\right)}%
  \endgroup
}
\newcommand*\pRegFq[6][8]{
  \begingroup 
  \pFqmuskip=#1mu\relax
  \begingroup\lccode`\~=`\,
  \lowercase{\endgroup\let~}\pFqcomma
  {}_{#2}\tilde{F}_{#3}{\left(\genfrac..{0pt}{}{#4}{#5};#6\right)}%
  \endgroup
}
\newcommand{\pFqcomma}{\mskip\pFqmuskip}
\DeclareMathOperator{\Lip}{Lip}
\DeclareMathOperator*{\diam}{diam}
\DeclareMathOperator{\dist}{dist}
\DeclareMathOperator*{\argmin}{argmin}
\DeclareMathOperator{\OT}{OT}
\newtheorem{theorem}{Theorem}[section]
\newtheorem{lemma}[theorem]{Lemma}
\newtheorem{remark}[theorem]{Remark}
\newtheorem{example}[theorem]{Example}
\newtheorem{corollary}[theorem]{Corollary}
\newtheorem{proposition}[theorem]{Proposition}
\begin{document}
\title{From Optimal Transport to Discrepancy}

\author{
Sebastian Neumayer\footnotemark[1]
	\and
Gabriele Steidl\footnotemark[1]
	}

\maketitle

\footnotetext[1]{Department of Mathematics,
	Technische Universit\"at Kaiserslautern,
	Paul-Ehrlich-Str.~31, D-67663 Kaiserslautern, Germany,
	\{name\}@mathematik.uni-kl.de.}

\begin{abstract}
A common way to quantify the ,,distance'' between measures is via their discrepancy, 
also known as maximum mean discrepancy (MMD).
Discrepancies are related to Sinkhorn divergences $S_\varepsilon$ with appropriate cost functions as $\varepsilon \to \infty$.
In the opposite direction, if $\varepsilon \to 0$,  Sinkhorn divergences approach another important distance between measures, namely the Wasserstein distance or more generally optimal transport ,,distance''.
In this chapter, we investigate the limiting process for arbitrary measures on compact sets and Lipschitz continuous cost functions.
In particular, we are interested in the behavior of the corresponding optimal potentials
$\hat \varphi_\varepsilon$, $\hat \psi_\varepsilon$ and $\hat \varphi_K$ appearing in the dual formulation of the Sinkhorn divergences and discrepancies, respectively.
While part of the results are known, we provide rigorous proofs for some relations which we have not found in this generality in the literature.
Finally, we demonstrate the limiting process  by numerical examples and show the behavior of the distances when used for the approximation of measures by point measures in a process called dithering. 
\end{abstract}

\section{Introduction}

The approximation of probability measures based on their discrepancies 
is a well examined problem in approximation and complexity theory \cite{Kuipers:1974la,Matousek:2010kb,Nowak:2010rr}.
Discrepancies appear in  a wide range of applications, e.g., in the derivation of quadrature rules \cite{Nowak:2010rr}, 
the construction of designs \cite{DGS1977}, image dithering and representation \cite{EGNS2019,Graf:2013fk,SGBW2010,TSGSW2011}, 
see also Fig.~\ref{fig:dither_sphere}, generative adversarial networks \cite{DRG2015} and multivariate statistical testing \cite{FJG2008,GBRSS2007,GBRSS2012}.
In the last two applications, they are also called kernel based maximum mean discrepancies (MMDs).

\begin{figure}[t]
	\begin{center}
		\includegraphics[width=0.46\textwidth]{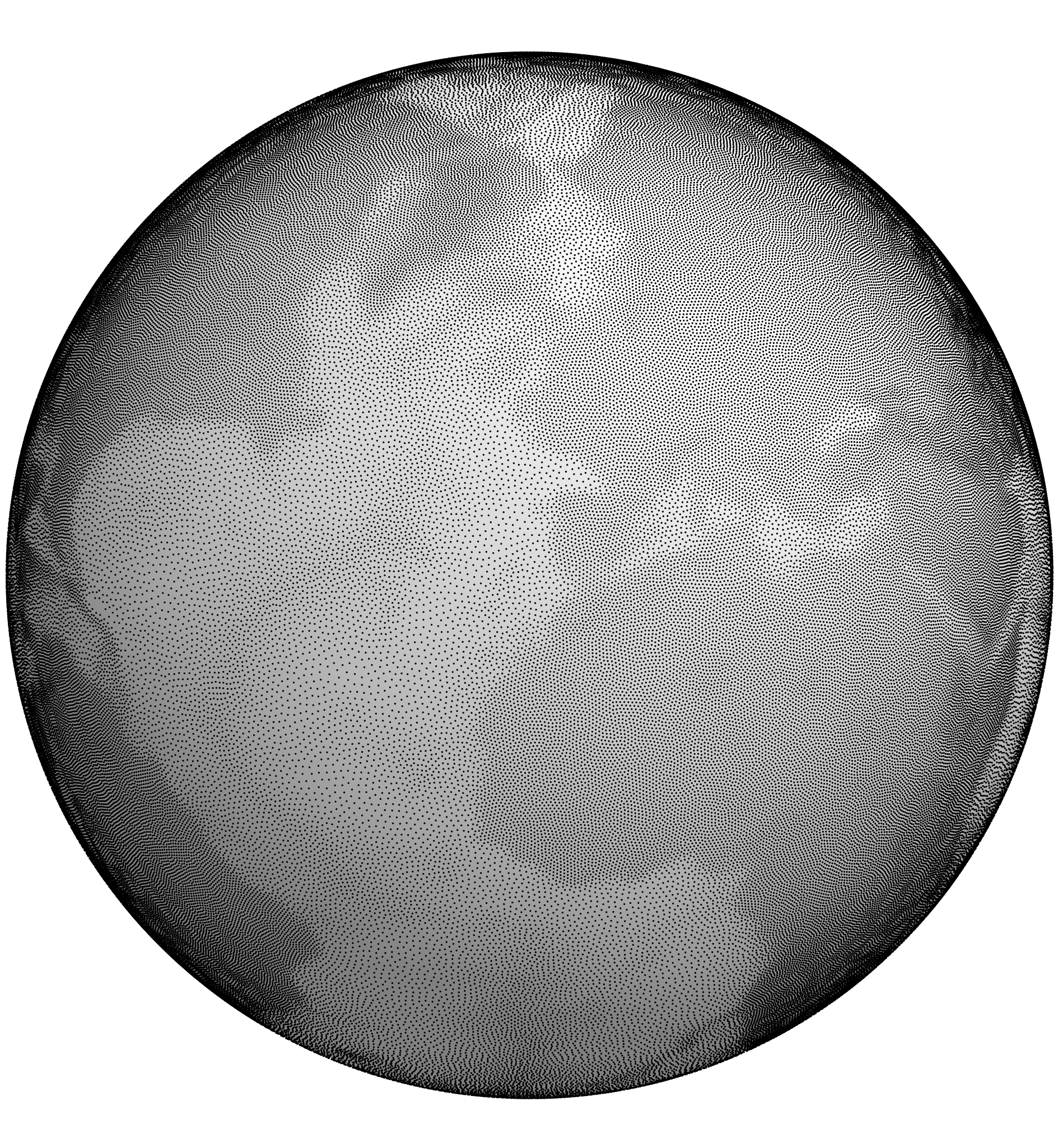}
		\qquad \includegraphics[width=0.46\textwidth]{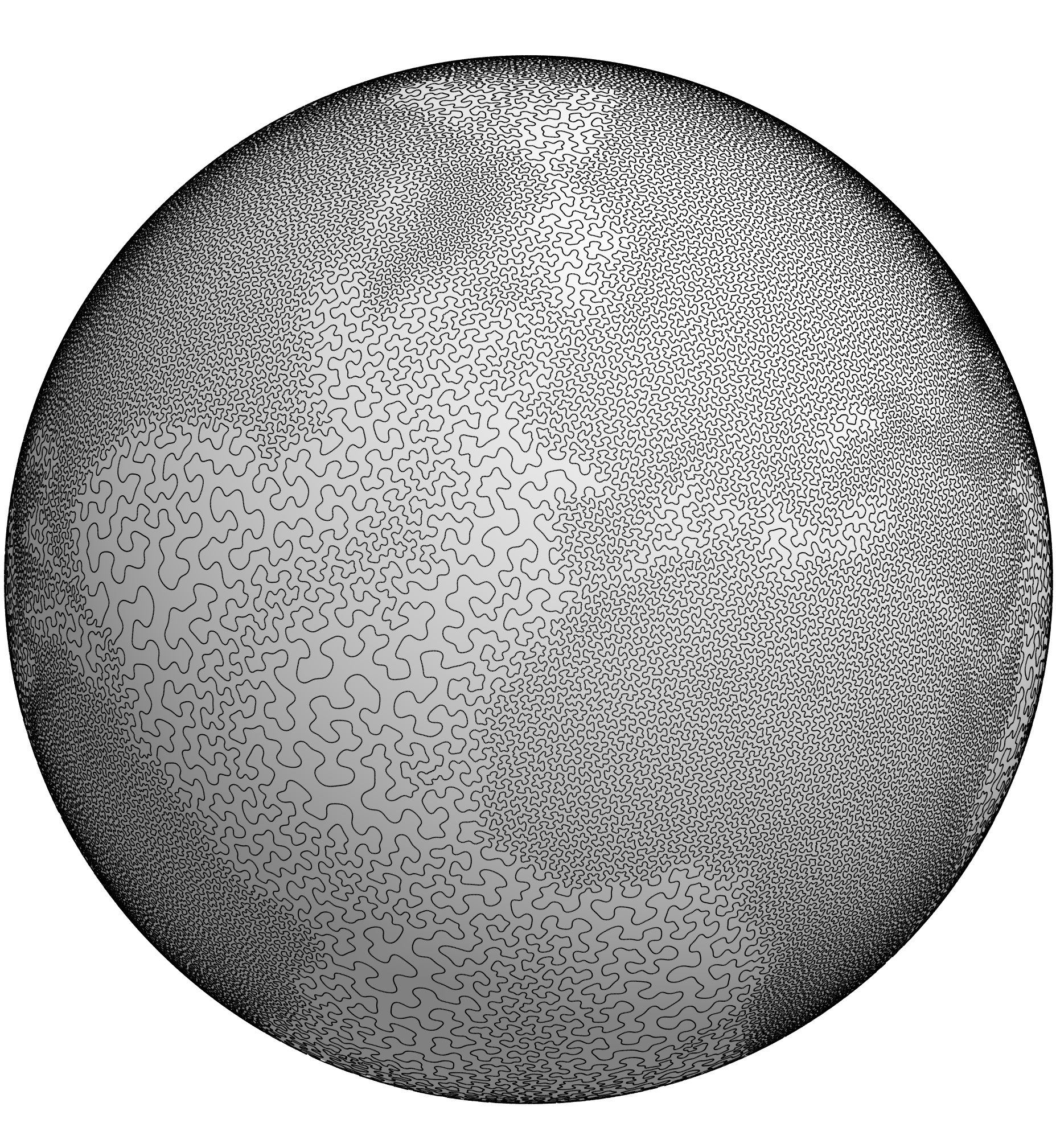}
	\end{center}
	\caption{\label{fig:dither_sphere} 
		Approximation of a measure on $\mathbb S^2$ by an empirical measure \cite{Graf:2013fk} (left) and a measure supported on a curve \cite{EGNS2019} (right)
		using discrepancies as objective function to minimize.}
\end{figure}		

On the other hand, optimal transport (OT) ,,distances'' and in particular Wasserstein distances became very popular for tackling various problems in imaging sciences, graphics or machine learning \cite{CP2019}. There exists a large amount of papers both on the theory and applications of OT,
for image dithering with Wasserstein distances see, e.g., \cite{CCKW2017,GBOD2012,LGKW2018}.

Recently, regularized versions of OT for an efficient numerical treatment, known as Sinkhorn divergences \cite{C2013}, were used as replacement of OT in data science.
Note that such regularization ideas are also investigated in the earlier works \cite{Rus95,Sin64,Wil69,Yul12}.
For appropriately related transport cost functions and discrepancy kernels, the Sinkhorn divergences interpolate between the OT distance if the parameter goes to zero and the discrepancy if it goes to infinity~\cite{FSVATP2018}.
In this chapter, the convergence behavior is examined for general measures on compact sets.
Since cost functions applied in practice are mainly Lipschitz, we restrict our attention to such costs.
This simplifies some proofs, since the theorem of Arzelà--Ascoli can be utilized.
To make the paper self-contained, we provide most of the proofs although some of them are not novel and the corresponding papers are cited in the context.
For estimating approximation rates when approximating measures by those of certain subsets, see, e.g., \cite{Che18,EGNS2019,GCBCP2018,Nowak:2010rr}, the dual form of the discrepancy, respectively of the (regularized) Wasserstein distance, plays an important role. 
Therefore, we are interested in the properties of the optimal dual potentials for varying regularization parameters. 
In Proposition~\ref{lem:PropPot} we prove that the optimal dual potentials 
converge uniformly to certain functions as $\varepsilon \to \infty$.
Then, in Corollary~\ref{cor:conn_disr}, we see that the normalized difference of these limiting functions coincides with the optimal potential in the dual form of the discrepancy if the cost function and the kernel are appropriately related.
This behavior is underlined by a numerical example.

This chapter is organized as follows:
Section \ref{sec:basics_OT} recalls basic results on measures, the Kullback-Leibler (KL) divergence and from convex analysis.
In Section~\ref{sec:discrepancies_OT}, we introduce discrepancies, in particular their dual formulation.
Since these rely on positive definite kernels, we have a closer look at positive definite and conditionally positive definite kernels.
Optimal transport and in particular Wasserstein distances are considered in Section \ref{sec:OT}.
In Section \ref{sec:sinkhorn}, we investigate the limiting processes for the KL regularized OT distances, when the regularization parameter goes to zero or infinity.
Some results in Proposition \ref{prop:conv} are novel in this generality; Proposition \ref{lem:PropPot} seems to be new as well.
Remark \ref{rem:KL-versus-entropy} highlights why the KL divergence should be preferred as regularizer instead of the (neg)-entropy when dealing with non-discrete measures. 
KL regularized OT does not fulfill $\OT_\varepsilon(\mu,\mu) = 0$, which motivates the definition of the Sinkhorn divergence $S_\varepsilon$ in Section \ref{sec:sink_div}.
Further, we prove $\Gamma$-convergence to the discrepancy as $\varepsilon \to \infty$ if the cost function of the Sinkhorn divergence is adapted to the kernel defining the discrepancy.
Section \ref{sec:numerics_OT} underlines the results on the limiting process by numerical examples.
Further, we provide an example on the dithering of the standard Gaussian when Sinkhorn divergences with respect to different regularization parameters $\varepsilon$ are involved. 
Finally, conclusions and directions of future research are given in Section \ref{sec:conclusions_OT}.

\section{Preliminaries} \label{sec:basics_OT}
%
\paragraph{Measures} 
%
Let $\X$ be a compact Polish space (separable, complete metric space) with metric $\dist_\X$. 
By $\mathcal{B}(\X)$ we denote the Borel $\sigma$-algebra on $\X$ 
and by $\mathcal M(\X)$ the linear space of all finite signed Borel measures on $\X$, 
i.e., all $\mu\colon \mathcal{B}(\X) \rightarrow \mathbb R$ satisfying $\mu(\X) < \infty$ and for any sequence 
$\{B_k\}_{k \in \N} \subset \mathcal{B}(\X)$ of pairwise disjoint sets the relation $\mu(\cup_{k=1}^\infty B_k) = \sum_{k=1}^\infty \mu(B_k)$.
In the following, the subset of non-negative measures is denoted by $\mathcal M^+(\X)$.
The \emph{support of a measure} $\mu$ is defined as the closed set 
\[\supp(\mu) \coloneqq \bigl\{ x \in \X: B \subset \X \text{ open, }x \in B  \implies \mu(B) >0\bigr\}.\]

The \emph{total variation} measure of $\mu \in \mathcal M(\X)$ is defined by
\[
|\mu|(B) \coloneqq \sup \Bigl\{ \sum_{k=1}^\infty |\mu(B_k)|:
\bigcup\limits_{k=1}^\infty B_k = B, \, B_k \; \mbox{pairwise disjoint}\Bigr\}.
\]
With the norm $\| \mu\|_{\mathcal M} = |\mu|(\X)$ the space $\mathcal M(\X)$ becomes a Banach space.
By $C(\X)$ we denote the Banach space of continuous real-valued functions on 
$\X$ equipped with the norm $\| \varphi\|_{C(\X)} \coloneqq \max_{x \in \X} |\varphi(x)|$.
The space $\mathcal M(\X)$ can be identified via Riesz' representation theorem with the dual space of $C(\X)$ and the weak-$\ast$ topology on $\mathcal M(\X)$ gives rise to the \emph{weak convergence of measures}.
More precisely, a sequence $\{\mu_k\}_{k \in \N} \subset \mathcal M(\X)$ converges \emph{weakly} to $\mu$ and we write $\mu_k \weakly \mu$, if
\begin{equation}
\lim_{k \to \infty} \int_{\X} \varphi \dx \mu_k = \int_{\X} \varphi \dx \mu \qquad \text{for all } \varphi \in C(\X).
\end{equation}
For a non-negative, finite measure $\mu$ and $p \in [1,\infty)$, let $L^p(\X,\mu)$ 
be the Banach space (of equivalence classes) of complex-valued functions with norm
\[\|f\|_{L^p(\X,\mu)} = \left( \int_\X |f|^p \dx \mu \right)^\frac1p < \infty.\]

A measure $\nu \in \mathcal M(\X)$ is \emph{absolutely continuous} with respect to $\mu$ and we write $\nu \ll \mu$ if for every $A \in \mathcal B(\X)$ with $\mu(A) = 0$ we have $\nu(A) = 0$.
If $\mu, \nu \in \mathcal M^+(\X)$ satisfy $\nu \ll \mu$, then the \emph{Radon-Nikodym derivative} $\sigma_\nu \in L^1(\X,\mu)$ (also denoted by \smash{$\tfrac{\dx \nu}{\dx \mu}$}) exists and $\nu = \sigma_\nu \mu$.
Further, $\mu, \nu \in \mathcal M(\X)$ are \emph{mutually singular} and we write $\mu \perp \nu$ if two disjoint sets $X_\mu, X_\nu \in \mathcal B(\X)$ exist such that $\X = X_\mu \cup X_\nu$ and for every $A \in \mathcal B(\X)$ we have $\mu(A) = \mu(A \cap X_\mu)$ and $\nu(A) = \nu(A \cap X_\nu)$.
For any $\mu,\nu \in \mathcal M^+(\X)$, there exists a unique \emph{Lebesgue decomposition} of $\mu$ with respect to $\nu$ given by $\mu = \sigma_\mu \nu + \mu^\perp$, where $\sigma \in L^1(\X, \nu)$ and $\mu^\perp \perp \nu$.

By $\mathcal P (\X)$ we denote the \emph{set of Borel probability measures} on $\X$,
i.e., non-negative Borel measures with $\mu(\X) = 1$.
This set is \emph{weakly compact}, i.e., compact with respect to the \mbox{weak-$\ast$} topology.
Note that there is an ambiguity in the notation as the above usual weak-$\ast$ convergence is called weak convergence in stochastics.
In Section \ref{sec:OT}, we introduce a metric on $\mathcal P (\X)$ such that it becomes a Polish space.

\paragraph{Convex analysis} 
%
The following can be found, e.g., in \cite{BL2011}.
Let $V$ be a real Banach space with dual $V^*$, i.e., the space of real-valued continuous linear functionals on $V$.
We use the notation $\langle v,x \rangle = v(x)$, $v \in V^*, x \in V$.
For $F\colon V \rightarrow (-\infty,+\infty]$, the \emph{domain} of $F$ is given by
$ \mathrm{dom} F \coloneqq \{x \in V: F(x) \in \mathbb R \}$.
If $\mathrm{dom} F \not = \emptyset$, then $F$ is called \emph{proper}.
The \emph{subdifferential} of $F\colon V \rightarrow (-\infty,+\infty]$ at a point $x_0 \in \mathrm{dom} F$
is defined as
$$
\partial F(x_0) \coloneqq \bigl\{v \in V^*: F(x) \ge F(x_0) + \langle v,x - x_0 \rangle \bigr\},
$$
and $\partial F(x_0) = \emptyset$ if $x_0 \not \in \mathrm{dom} F$. The \emph{Fenchel conjugate}
$F^*\colon V^* \rightarrow (-\infty,+\infty]$ is given by
$$
F^*(v) = \sup_{x \in V} \{ \langle v,x\rangle  - F(x) \}.
$$
If $F\colon V\rightarrow \mathbb (-\infty,+\infty]$ is convex and lower semi-continuous (lsc) at $x \in \mathrm{dom} F$, then
\begin{equation} \label{dual_1}
v \in \partial F(x) \qquad \Leftrightarrow \qquad x \in \partial F^*(v).
\end{equation}
By $\Gamma_0(V)$ we denote the set of proper, convex, lsc functions mapping from $V$ to $(-\infty,+\infty]$.
Let $W$ be another real Banach space.
Then, for $F \in \Gamma_0(V)$,  $G \in \Gamma_0(W)$ and a linear, bounded operator $A\colon V \rightarrow W$
with the property that there exists $x \in \mathrm{dom} F$ such that $G$ is continuous at $A x$,
the following  Fenchel--Rockafellar duality  relation is fulfilled
\begin{equation} \label{primal-dual}
\sup_{x \in V}\bigl\{ - F(-x) - G(Ax)\bigr\}
=
\inf_{w \in W^*} \bigl\{ F^*(A^* w) + G^*(w)\bigr\},
\end{equation}
see \cite[Thm.~4.1, p.~61]{ET1999},
where we consider 
\[\sup_{x \in V}\bigl\{ - F(-x) - G(Ax)\bigr\} = - \inf_{x \in V} \bigl\{  F(-x) + G(Ax)\bigr\}\]
as primal problem with respect to the notation in \cite{ET1999}. 
If the optimal (primal) solution $\hat x$ exists, it is related to any optimal (dual) solution $\hat w$ by
\begin{equation} \label{dual_2}
A \hat x \in \partial G^* (\hat w),
\end{equation}
see \cite[Prop.~4.1]{ET1999}.

\paragraph{Kullback-Leibler divergence} 
%
A function $f \colon [0, +\infty) \to [0, +\infty]$ is called \emph{entropy function}, 
if it is convex, lsc and $\mathrm{dom} f \cap (0,+\infty) \neq \emptyset$.
The corresponding  recession constant is given by
$f'_\infty = \lim_{x \to \infty} \tfrac{f(x)}{x}$.
For every $\mu,\nu \in \mathcal M^+(\X)$ with Lebesgue decomposition $\mu = \sigma_\mu \nu + \mu^\perp$, the \emph{$f$-divergence} is defined as
\begin{equation}\label{eq:FDiv}
D_f(\mu,\nu) = \int_{\X} f \circ \sigma_\mu \dx \nu + f'_\infty\, \mu^\perp(\X).
\end{equation}
In case that $f'_\infty = \infty$ and $\mu^\perp(\X)=0$, we make the usual convention $\infty \cdot 0 = 0$.
The $f$-divergence fulfills $D_f(\mu,\nu) \ge 0$ for all $\mu,\nu \in \mathcal M^+(\X)$ with equality if and only if $\mu = \nu$, and is in general neither symmetric nor satisfies a triangle inequality.
The associated mapping $D_f\colon \mathcal M^+(\X) \times \mathcal M^+(\X) \to [0, +\infty]$ is jointly convex and weakly lsc, 
see \cite[Cor.~2.9]{LMS18}.
The $f$-divergence can be written in the dual form 
\[D_f(\mu,\nu) = \sup_{\varphi \in C(\X)}\int_{\X} \varphi \dx \mu - \int_{\X} f^* \circ \varphi \dx \mu,\]
see \cite[Rem.~2.10]{LMS18}.
Hence, $D_f(\cdot,\nu)$ is the Fenchel conjugate of 
$H \colon C(\X) \to \R$ given by $H(\varphi) \coloneqq \int_{\X} f^* \circ \varphi \dx \nu$.
If $f^*$ is differentiable, we directly deduce from \eqref{dual_1} that
\begin{equation}\label{eq:GradDiv}
\varphi \in \partial_{\mu}D_f(\mu,\nu) 
\quad \Leftrightarrow \quad 
\mu = \nabla H(\varphi) 
\quad \Leftrightarrow \quad 
\mu = \nabla f^* \circ \varphi \, \nu.
\end{equation}

In the following, we focus on the \emph{Shannon-Boltzmann entropy} function and its Fenchel conjugate given by
\[f(x) = x\log(x) - x +1 \quad \mathrm{and} \quad f^*(x) = \exp(x) -1\] 
with the agreement $0 \log 0 = 0$.
The corresponding $f$-divergence is the
\emph{Kullback-Leibler divergence} $\mathrm{KL}\colon{\mathcal M^+}(\X) \times {\mathcal M^+}(\X) \rightarrow \mathbb [0, +\infty]$.
For $\mu,\nu\in {\mathcal M^+}(\X)$ with existing Radon-Nikodym derivative 
$\sigma_\mu = \frac{\dx \mu}{\dx \nu}$ of $\mu$ with respect to $\nu$, formula \eqref{eq:FDiv} 
can be written as
\begin{equation} \label{KLdef}
\mathrm{KL} (\mu,\nu) \coloneqq \int_{\X} \log(\sigma_\mu ) \, \dx \mu + \nu(\X) - \mu(\X).
\end{equation}
In case that the above Radon-Nikodym derivative does not exist, \eqref{eq:FDiv} implies $\mathrm{KL} (\mu,\nu) = + \infty$.
For $\mu,\nu \in \mathcal{P}(\X)$ the last two summands in \eqref{KLdef} cancel each other.
Hence, we have for discrete measures
$\mu = \sum_{j=1}^n \mu_j \delta_{x_j}$ and $\nu = \sum_{j=1}^n \nu_j \delta_{x_j}$ with $\mu_j,\nu_j \ge 0$ and
$\sum_{j=1}^n \mu_j = \sum_{j=1}^n \nu_j = 1$
that
$$
\mathrm{KL}(\mu,\nu) = \sum_{j=1}^n \log \left( \frac{\mu_j}{\nu_j}\right) \mu_j.
$$
Further, the $\mathrm{KL}$ divergence is strictly convex with respect to the first variable.
Due to the Fenchel conjugate pairing
\begin{equation}\label{dual_G}
H(\varphi) = \int_\X \exp(\varphi) -1 \dx \nu  
\quad \mathrm{and} \quad 
H^*(\mu) = \mathrm{KL}(\mu,\nu),
\end{equation}
the derivative relation \eqref{eq:GradDiv} simplifies to
\begin{equation}\label{eq:GradKL}
\varphi \in \partial_{\mu}\mathrm{KL}(\mu,\nu) 
\quad \Leftrightarrow \quad 
\mu = e^\varphi \nu
\quad \Leftrightarrow \quad 
\varphi = \log \Bigl(\frac{\dx \mu}{\dx \nu}\Bigr).
\end{equation}
Finally, note that the KL divergence and  the total variation norm $\Vert \cdot \Vert_\mathcal{M}$ are related by the \emph{Pinsker inequality}
$
\|\mu - \nu\|_{\mathcal M}^2 \le \mathrm{KL} (\mu,\nu). 
$

\section{Discrepancies} \label{sec:discrepancies_OT}
In this section, we introduce the notation of discrepancies and have a closer look at (conditionally) positive definite kernels.
In particular, we emphasize how conditionally positive definite kernels can be modified to  positive definite ones.

Let $\sigma_\X \in \mathcal M(\X)$ be non-negative with $\supp(\sigma_\X) = \X$.
The given definition of discrepancies is based on symmetric, positive definite, continuous kernels.
There is a close relation to general discrepancies related to measures on $\mathcal{B}(\X)$, see \cite{Nowak:2010rr}.
Recall that a symmetric function $K\colon \X \times \X \rightarrow \mathbb R$ is \emph{positive definite} 
if for any finite number $n \in \mathbb N$ 
of points $x_j\in \X$, $j=1,\ldots,n$, the relation
$$
\sum_{i,j=1}^n a_i a_j K(x_i,x_j) \ge 0
$$
is satisfied for all $(a_j)_{j=1}^n \in \R^n$ and \emph{strictly positive definite} if strict inequality holds for all $(a_j)_{j=1}^n \neq 0$.
Assuming that $K \in C(\X \times \X)$ is symmetric, positive definite, 
we know by Mercer's theorem \cite{CS2002,Mer1909,Steinwart:2011it} 
that there exists an orthonormal basis $\{\phi_k: k \in \N\}$ of $L^2(\X,\sigma_\X)$ 
and non-negative coefficients $\{\alpha_k\}_{k \in \N} \in \ell_1$ such that $K$ has the Fourier expansion
\begin{equation} \label{mercer_OT}
K(x,y) = \sum_{k=0}^\infty \alpha_k \phi_k(x)\overline{\phi_k(y)}
\end{equation}
with absolute and uniform convergence of the right-hand side.
If $\alpha_k > 0$ for some $k \in \mathbb N_0$, the corresponding function $\phi_k$ is continuous.
Every function $f\in L^2(\X, \sigma_\X)$ has a Fourier expansion
$$
f = \sum_{k=0}^\infty \hat f_k \phi_k, \quad \hat f_k \coloneqq \int_{\X} f \overline{\phi_k} \dx \sigma_\X.
$$
Moreover, for $k \in \mathbb  N_0$ with $\alpha_k >0$, the \emph{Fourier coefficients} of $\mu \in \mathcal P(\X)$ are well-defined by
\begin{equation}
\hat{\mu}_k\coloneqq \int_{\X}\overline{\phi_k} \dx\mu.
\end{equation}

The kernel $K$ gives rise to a \emph{reproducing kernel Hilbert space} (RKHS).
More precisely, the function space
$$
H_{K} (\X) \coloneqq \Bigl\{f \in L^2(\X, \sigma_\X) : \sum_{k=0}^\infty \alpha_k^{-1} |\hat f_k|^2 < \infty \Bigr\}
$$
equipped with the inner product and the corresponding norm
\begin{equation} \label{norm_rkhs_OT}
\langle f,g \rangle_{H_{K} (\X)} 
= \sum_{k=0}^\infty \alpha_k^{-1} \hat f_k \overline{\hat g_k}, \quad \|f\|_{H_{K} (\X)} = \langle f,f \rangle_{H_{K} (\X)}^\frac12
\end{equation}
forms a Hilbert space with reproducing kernel, i.e.,
\begin{align}
K (x,\cdot) \in H_{K} (\X) \quad \qquad &\mbox{for all} \; x \in \X,\\
f(x) = \left\langle f, K (x,\cdot) \right\rangle_{H_{K} (\X)} \quad &\mbox{for all} \; f \in H_{K} (\X), \; x \in \X. \label{reprod}
\end{align}
Note that $f\in H_K(\X)$ implies $\hat{f}_k=0$ if $\alpha_k=0$, in which case we make the convention $\alpha_k^{-1} \hat f_k=0$ in \eqref{norm_rkhs_OT}.
Indeed, $H_{K} (\X)$ is the closure of the linear span of $\{ K (x_j,\cdot): x_j \in \X \}$ with respect to the norm \eqref{norm_rkhs_OT}.
The space $H_{K} (\X)$ is continuously embedded in $C(\X)$ and hence point evaluations in $H_{K} (\X)$ are continuous.
Since the series in \eqref{mercer_OT} converges uniformly and the functions $\phi_k$ are continuous, the function
$$
\|K(x,\cdot)\|_{H_K(\X)} 
= 
\Bigl \|  \sum_{k=0}^\infty \alpha_k \phi_k(x)\overline{\phi_k(\cdot)} \Bigr\|_{H_K(\X)}
= 
\biggl( \sum_{k=0}^\infty \alpha_k |\phi_k(x)|^2 \biggr)^\frac12
$$
is also continuous so that we have
$\int_\X \|K(x,\cdot)\|_{H_K(\X)} \dx \mu(x) < \infty$.
By the definition of Bochner integrals, see~\cite[Prop.~1.3.1]{HNVW2016}, we have for any $\mu \in \mathcal{P}(\X)$ that
\begin{equation} \label{gg}
\int_\X K(x,\cdot) \dx \mu(x) \in H_K(\X).
\end{equation} 

For $\mu,\nu \in {\mathcal M}(\X)$,
the \emph{discrepancy} $\mathscr{D}_K(\mu,\nu)$ is defined as norm of the linear operator $T\colon H_K \rightarrow \mathbb R$ with $\varphi \mapsto \int_{\X} \varphi  \dx \xi$,
\begin{align}\label{equiv_1_OT}
\mathscr{D}_K(\mu,\nu)
&=
\max_{\| \varphi\|_{ H_{K} (\X) } \le 1}
\int_{\X} \varphi  \dx  \xi,
\end{align}
where $\xi \coloneqq \mu-  \nu$, see \cite{Gnewuch:2012jy,Nowak:2010rr}.
If $\mu_n \weakly \mu$ and $\nu_n \weakly \nu$ as $n\rightarrow \infty$, then also $\mu_n \otimes \nu_n \weakly \mu \otimes \nu$.
Thus, continuity of $K$ implies that $\lim_{n \rightarrow \infty} \mathscr{D}_K(\mu_n,\nu_n) = \mathscr{D}_K(\mu,\nu)$.
Since
\begin{align}
\int_\X \varphi \dx \xi 
= 
\int_\X  \langle \varphi, K(x,\cdot) \rangle_{H_K(\X)} \dx \xi(x)
= 
\Bigl\langle \varphi, \int_\X K(x,\cdot) \dx \xi(x) \Bigr\rangle_{H_K(\X)},
\end{align}
we obtain by Schwarz' inequality that
the optimal dual potential (up to the sign) is given by
\begin{equation}\label{dual_kern}
\hat \varphi_K = \frac{\int_\X K(x,\cdot) \dx \xi(x) }{\Vert \int_\X K(x,\cdot) \dx \xi(x) \Vert_{H_K(\X)}} 
=  \frac{\int_\X K(x,\cdot) \dx \mu(x) - \int_\X K(x,\cdot) \dx \nu(x)}{\Vert K(x,\cdot) \dx \mu(x) 
	- \int_\X  K(x,\cdot) \dx \nu(x) \Vert_{H_K(\X)}} .
\end{equation}
In the following, it is always clear from the context if the Fourier transform of the function 
or the optimal dual potential is meant.	
Further,  Riesz' representation theorem implies
\begin{equation} \label{dual-equiv}
\mathscr{D}_K(\mu,\nu) = \max_{\| \varphi\|_{ H_{K} (\X) } \le 1} \int_{\X} \varphi  \dx \xi 
= \Bigl\| \int_\X K(x,\cdot) 
\dx \xi(x) \Bigr\|_{H_{K}(\X)},
\end{equation}
so that we conclude by Fubini's theorem and \eqref{reprod} that
\begin{align} \label{mercer_1_OT}
\mathscr{D}_K^2(\mu,\nu)
&= \Bigl\| \int_\X K(x,\cdot) 
\dx \xi(x) \Bigr\|_{H_{K}(\X)}^2 
= \int_{\X^2} K \dx(\xi \otimes \xi) \\
&= \int_{\X^2} K \dx(\mu \otimes \mu)
+\int_{\X^2} K\dx(\nu \otimes \nu)  -2\int_{\X^2} K \dx(\mu \otimes \nu).  
\end{align}
By \eqref{mercer_OT}, we finally get
\begin{align} 
\mathscr{D}_K^2(\mu,\nu)
&=\sum_{k=0}^\infty \alpha_k  \big| \hat{\mu}_{k}-\hat{\nu}_{k}  \big|^2, \label{mercer_2_OT}
\end{align}
where the summation runs over all $k\in \mathbb N_0$ with $\alpha_k >0$.

\begin{remark}\upshape\textbf{(Relation to attraction-repulsion functionals)} \label{rem:attraction-repulsion}
	We briefly consider the relation to attraction-repulsion functionals motivated from electrostatic halftoning, see
	\cite{SGBW2010,TSGSW2011}.
	Let $\nu = w \dx x$ be fixed, for example a continuous (normalized) image with gray values in $[0,1]$ represented by $w\colon \X \to [0,1]$, where pure black is the largest value of $w$ and white the smallest one.
	Then, looking for a discrete measure $\mu = \frac{1}{M} \sum_{j=1}^M \delta(\cdot-p_j)$ that approximates $\nu$ by minimizing the squared discrepancy is equivalent to solving the minimization problem
	$$
	\argmin_{p \in \R^M} \biggl\{ \underbrace{\frac{1}{2M} \sum_{i,j=1}^M K(p_i,p_j)}_{\mathrm{repulsion}} 
	- \underbrace{\sum_{i=1}^M \int_{\X } w(x) K(x,p_i)}_{\mathrm{attraction}} \biggr\}.
	$$
	For $K(x,y) = h(\|x-y\|)$ and an decreasing function $h\colon [0,+\infty) \rightarrow \mathbb R$,
	it becomes clear that 
	\begin{itemize}
		\item
		the first term is minimal if the points are far away from each other, implying a \emph{repulsion};
		\item
		the second (negative) term becomes maximal if for large $w(x)$, there are many points positioned in this area; so it can be considered as an \emph{attraction} steered by $w$. 
	\end{itemize}
\end{remark}

\paragraph{Kernels.}  
In this paragraph, we want to have a closer look at appropriate kernels. 
Recall that for symmetric, positive definite kernels $K_i \in C(\X \times \X)$, $i=1,2$, and $\alpha > 0$, 
the kernels $\alpha K_1$, $K_1 + K_2$, $K_1 \cdot K_2$ and $\mathrm{exp}(K_1)$ 
are again positive definite, see \cite[Lems.~4.5 and 4.6]{SC08}.

Of special interest are so-called radial kernels of the form
$$
K(x,y) \coloneqq h\bigl(\dist_{\X} (x,y) \bigr),
$$
where $h\colon [0,+\infty) \rightarrow \mathbb R$. 
In the following, the discussion is restricted to compact sets $\X$ in $\mathbb R^d$ 
and the Euclidean distance $\dist_{\X} (x,y) = \|x-y\|$. 
Many results on positive definite functions on $\mathbb R^d$ go back to Schoenberg \cite{Sch38} and Micchelli \cite{Mi86}.
For a good overview, we refer to \cite{Wendland:2004wd}, where some of the following statements can be found.
Clearly, restricting positive definite kernels on $\mathbb R^d$ to compact subsets $\X$ results in positive definite kernels on $\X$.
The radial kernels related to the Gaussian, which are quite popular in MMDs, and the inverse multiquadric given by 
\begin{align}
h(r) = e^{-r^2/c^2}  \qquad \mathrm{and} \qquad 
h(r) = (c^2 + r^2)^{-p},  \quad c, p >0,
\end{align}
are known to be strictly positive definite on $\mathbb R^d$ for every $d \in \mathbb N$.
Further, the following compactly supported functions $h$ give rise to positive definite kernels in $\mathbb R^d$:
\begin{equation} \label{dist_posdef}
h(r) = (1-r)_+^p, \qquad p \ge \left\lfloor \frac{d}{2} \right\rfloor + 1,
\end{equation}
where $\lfloor a \rfloor$ denotes the largest integer less or equal than $a \in \mathbb R$
and $a_+ \coloneqq \max(a,0)$.

In connection with Wasserstein distances, we are interested in (negative) powers of distances 
$K(x,y) =  \|x-y\|^p$, $p > 0$,  
related to the functions $h(r) = r^p$.
Unfortunately, all these functions are not positive definite!
By \eqref{dist_posdef}, we know that $\tilde K(x,y) = 1-|x-y|$ is positive definite in one dimension $d=1$.
A more  general result for the Euclidean distance is given in the following proposition.

\begin{proposition}\label{prop:graef}
	Let $K(x,y) = - \|x - y\|$. 
	For every compact set $\X \subset \mathbb R^d$, there exists a constant $C > 0$ such that the function
	$$
	\tilde K(x,y) \coloneqq C - \|x - y\|
	$$
	is positive definite on $\X$. 
	Further, for $\mu, \nu \in \mathcal{P}(\X)$, it holds
	$$
	\mathscr{D}_{\tilde K}^2(\mu,\nu) = \mathscr{D}_K^2(\mu,\nu) 
	\quad \mathrm{and} \quad 
	\hat \varphi_{\tilde K} = \hat \varphi_{K}.
	$$
\end{proposition}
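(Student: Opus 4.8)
The plan is to prove positive definiteness of $\tilde K$ on $\X$ by a conditionally-positive-definite argument, and then to show that adding a constant to a conditionally positive definite kernel does not affect the discrepancy or the optimal potential when $\mu,\nu$ are probability measures. First, recall the classical Schoenberg result: the function $x \mapsto -\|x-y\|$ is \emph{conditionally positive definite} on $\R^d$ of order one, i.e.\ for any finite set of points $x_1,\dots,x_n \in \R^d$ and any coefficients $(a_j)_{j=1}^n \in \R^n$ with $\sum_{j=1}^n a_j = 0$ one has $\sum_{i,j=1}^n a_i a_j\,(-\|x_i-x_j\|) \ge 0$. This is exactly the statement that Euclidean distance is a negative-type metric, and it restricts to any compact $\X \subset \R^d$. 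So I would begin by citing this (from \cite{Sch38,Wendland:2004wd}) and then invoke the standard modification lemma: if $K_0$ is symmetric and conditionally positive definite of order one on the compact set $\X$, then for $C$ large enough the kernel $\tilde K = C + K_0$ is (unconditionally) positive definite on $\X$.

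For the modification step I would give the short direct argument rather than just a citation, since it also pins down what $C$ must be. Fix points $x_1,\dots,x_n\in\X$ and arbitrary $(a_j)_{j=1}^n\in\R^n$; write $s \coloneqq \sum_j a_j$ and decompose $a = (a - \tfrac{s}{n}\mathbf 1) + \tfrac{s}{n}\mathbf 1$, where the first summand has coefficient sum zero. Expanding $\sum_{i,j} a_i a_j \tilde K(x_i,x_j) = C s^2 + \sum_{i,j} a_i a_j K_0(x_i,x_j)$ and then expanding the second term using the decomposition, the "sum-zero part" of $K_0$ is $\ge 0$ by conditional positive definiteness, while the cross terms and the $\tfrac{s^2}{n^2}\mathbf 1^\top K_0\mathbf 1$ term are all bounded in absolute value by $\operatorname{const}\cdot s^2 \cdot \max_{x,y\in\X}\|x-y\| = \operatorname{const}\cdot s^2 \cdot \operatorname{diam}(\X)$. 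Hence choosing $C \ge c\,\operatorname{diam}(\X)$ for a suitable absolute constant $c$ makes the whole expression $\ge 0$; this $C$ depends only on $\X$ through its diameter, which is finite by compactness. (One can also quote \cite{SC08,Wendland:2004wd} and skip the bookkeeping.)

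For the second assertion, the point is that $\tilde K - K = C$ is a constant kernel, and a constant kernel contributes nothing to the discrepancy of a difference of two probability measures. Concretely, using the representation \eqref{mercer_1_OT}, namely $\mathscr D_{\tilde K}^2(\mu,\nu) = \int_{\X^2} \tilde K \dx(\xi\otimes\xi)$ with $\xi = \mu-\nu$, and $\int_{\X^2} C \dx(\xi\otimes\xi) = C\,\xi(\X)^2 = C\,(\mu(\X)-\nu(\X))^2 = 0$ because $\mu,\nu\in\mathcal P(\X)$ both have total mass one, we get $\mathscr D_{\tilde K}^2(\mu,\nu) = \int_{\X^2} K \dx(\xi\otimes\xi) = \mathscr D_K^2(\mu,\nu)$. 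Note that although $K=-\|\cdot-\cdot\|$ itself is not positive definite, the right-hand side of \eqref{mercer_1_OT} still makes sense as a (nonnegative) quadratic form in $\xi$ on the mean-zero subspace, which is exactly where conditional positive definiteness lives, so the identity is meaningful. For the potentials, recall from \eqref{dual_kern} that $\hat\varphi_{\tilde K}$ is $\int_\X \tilde K(x,\cdot)\dx\xi(x)$ normalized in $H_{\tilde K}(\X)$; since $\int_\X C \dx\xi(x) = C\,\xi(\X) = 0$, we have $\int_\X \tilde K(x,\cdot)\dx\xi(x) = \int_\X K(x,\cdot)\dx\xi(x)$ pointwise, and after normalization this yields $\hat\varphi_{\tilde K} = \hat\varphi_K$, where on the right-hand side $K$ is to be read via the conditionally positive definite structure (the seminorm of $\int_\X K(x,\cdot)\dx\xi(x)$ on the mean-zero space coincides with its $H_{\tilde K}$-norm).

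The main obstacle is bookkeeping around the fact that $K$ itself is not positive definite, so phrases like "$\hat\varphi_K$" and "$\mathscr D_K$" only make literal sense through the conditionally-positive-definite semi-inner-product; I would add a sentence clarifying that these objects are well-defined on the codimension-one subspace of mean-zero measures and that all the relevant formulas \eqref{dual_kern}–\eqref{mercer_2_OT} specialize correctly there. Everything else is routine: Schoenberg's theorem is quoted, the modification constant is an elementary estimate, and the invariance under adding a constant is a one-line computation using $\xi(\X)=0$.
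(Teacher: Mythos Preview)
Your treatment of the invariance of $\mathscr D_K^2$ and $\hat\varphi_K$ under addition of a constant is correct and is exactly what the paper does: both follow immediately from \eqref{mercer_1_OT} and \eqref{dual_kern} together with $\xi(\X)=0$ for $\xi=\mu-\nu$.

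The positive-definiteness argument, however, has a genuine gap. The ``standard modification lemma'' you invoke---that any symmetric, conditionally positive definite kernel of order one on a compact set becomes positive definite after adding a sufficiently large constant---is \emph{false} in general. Take $\X=\{x_1,x_2\}$ and $K_0(x_i,x_j)=-\bigl(f(x_i)+f(x_j)\bigr)$ with $f(x_1)=M>0$, $f(x_2)=0$; then $K_0$ is (trivially) conditionally positive definite of order one, but the $2\times 2$ matrix $\bigl(C+K_0(x_i,x_j)\bigr)_{i,j}$ has determinant $-M^2<0$ for \emph{every} $C$, so $C+K_0$ is never positive definite. Correspondingly, your direct argument breaks precisely at the cross-term estimate: writing $a=b+\tfrac{s}{n}\mathbf 1$ with $\sum_j b_j=0$, the cross contribution $\tfrac{2s}{n}\sum_i b_i\sum_j K_0(x_i,x_j)$ is of order $|s|\cdot\|b\|_1\cdot\operatorname{diam}(\X)$, not $s^2\cdot\operatorname{diam}(\X)$, and $\|b\|_1$ is not controlled by $|s|$. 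The paper does not attempt a self-contained proof here but cites \cite[Cor.~2.15]{Graf:2013zl}; a correct argument must use structure specific to $K(x,y)=-\|x-y\|$ (for instance, its integral representation or Schoenberg's theorem that $e^{-t\|x-y\|}$ is positive definite for every $t>0$) rather than mere conditional positive definiteness.
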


\begin{proof}
	In \cite[Cor.~2.15]{Graf:2013zl} it was shown that $\tilde K$ is positive definite.
	The rest follows in a straightforward way from \eqref{mercer_1_OT} and \eqref{dual_kern} regarding that $\mu$ and $\nu$ are probability measures.
\end{proof}

Some interesting functions such as negative powers of Euclidean distances or the smoothed distance function
$\sqrt{c^2 + \|x-y\|^2}$, $0 < c \ll 1$, are conditionally positive definite. 
Let $\Pi_{m-1} (\mathbb R^d)$ denote the \smash{$\binom{d + m -1}{d}$}-dimensional space of polynomials on $\mathbb R^d$ of absolute degree (sum of exponents) $\le m-1$.
A function $K\colon \X \times \X \rightarrow \mathbb R$
is \emph{conditionally positive definite of order} $m$ if for 
all points
$x_1, \ldots, x_n \in \mathbb R^d$, $n \in \mathbb N$, the relation
\begin{equation} \label{cpd}
\sum_{i,j=1}^n a_i a_j K(x_i,x_j) \ge 0
\end{equation}
holds true for  all
$a_1, \ldots,a_n \in \mathbb R$
satisfying
\begin{equation}
\sum_{i=1}^n a_i P(x_i) = 0 \qquad \mathrm{for \; all} \quad P \in \Pi_{m-1}(\mathbb R^d).
\end{equation}
If strong inequality holds in \eqref{cpd} except for $a_i=0$ for all $i=1,\ldots,n$, then
$K$ is called \emph{strictly conditionally positive definite of order} $m$.
In particular, for $m=1$, the condition~\eqref{cpd} relaxes to $\sum_{i=1}^n a_i = 0$.

The radial kernels related to the following functions are strictly conditionally positive definite of order $m$ on $\mathbb R^d$:
\begin{align*}
h(r) &= (-1)^{\lceil p\rceil} (c^2 +r^2)^p , && p>0, p \not \in \mathbb N, m =\lceil p \rceil,
\\
h(r) &= (-1)^{\lceil p/2 \rceil} r^p ,
&&p>0, p \not \in 2 \mathbb N, m = \lceil p/2 \rceil,
\\
h(r) &=  (-1)^{k+1}r^{2k} \log(r), &&k \in \mathbb N, m = k+1,
\end{align*}
where $\lceil a \rceil$, denotes the smallest integer larger or equal than $a \in \R$.
The first group of functions are called multiquadric and the last group is known as thin plate splines. 
In connection with Wasserstein distances, the second group of functions is of interest.

By the following lemma, it is easy to turn conditionally positive definite functions into
positive definite ones. However, only for conditionally positive definite functions of order $m=1$,
the discrepancy remains the same.

\begin{lemma} \label{lem:cpd_2}
	Let $\Xi \coloneqq \{u_k: k=1,\ldots,N\}$ with \smash{$N \coloneqq \binom{d + m -1}{m-1}$}
	be a set of points such that 
	$P(u_k) = 0$ for all $k=1,\ldots,N$, $P \in \Pi_{m-1} (\mathbb R^d)$, 
	is only fulfilled for the zero polynomial. 
	Denote by $\{P_k:k=1,\ldots,N\}$ the set of Lagrangian basis polynomials with respect to $\Xi$,
	i.e., $P_k(u_j) = \delta_{jk}$.
	Let $K \in C(\X \times \X)$ be a symmetric conditionally positive definite kernel of order $m$. 
	\begin{itemize}
		\item[i)]
		Then
		\begin{align*}
		\tilde K(x,y) \coloneqq K(x,y) - &\sum_{j=1}^N P_j(x) K(u_j,y) - \sum_{k=1}^N P_k(y) K(x,u_k)\\ 
		+ &\sum_{j,k=1}^N P_j(x)P_k(y) K(u_j,u_k)
		\end{align*}
		is a positive definite kernel.
		\item[ii)] 
		If $\mu$ and $\nu$ have the same moments up to order $m-1$, then they satisfy $\mathscr{D}_{\tilde K}^2(\mu,\nu) = \mathscr{D}_K^2(\mu,\nu)$.
		\item[iii)]
		In particular, we have for $m=1$, $\mu, \nu \in \mathcal{P}(\X)$ and any fixed $u \in \X$ that
		\begin{equation} \label{cpd_1}
		\tilde K(x,y) = K(x,y) -  K(u,y) - K(x,u) + K(u,u)
		\end{equation}
		and
		\begin{align}
		\mathscr{D}_{\tilde K}^2(\mu,\nu) &= \mathscr{D}_K^2(\mu,\nu) , \\
		\hat \varphi_{\tilde K} &= \frac{\int_\X  K(x,\cdot) \dx \mu(x) - \int_\X  K(x,\cdot) \dx \nu(x) + c_\nu - c_\mu}
		{\Vert \int_\X  K(x,\cdot) \dx \mu(x) - \int_\X  K(x,\cdot) \dx \nu(x) + c_\nu - c_\mu \Vert_{H_K(\X)}},
		\end{align}
		where
		\begin{equation} \label{eq:xx}
		c_\mu \coloneqq \int\limits_{\X}  K(x,u) \dx\mu(x) 
		\quad \mathrm{and} \quad
		c_\nu \coloneqq \int\limits_{\X}  K(x,u) \dx\nu(x).
		\end{equation}
	\end{itemize}
\end{lemma}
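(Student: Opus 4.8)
The plan is to prove (i) by enlarging the point configuration by the set $\Xi$, to read off (ii) from (i) together with the representation \eqref{mercer_1_OT}, and to obtain (iii) by specialising to $m=1$; throughout, for a merely conditionally positive definite kernel $K$ the symbol $\mathscr{D}_K^2(\mu,\nu)$ is understood as $\int_{\X^2} K \dx(\xi \otimes \xi)$ with $\xi \coloneqq \mu - \nu$, consistently with \eqref{mercer_1_OT}. For part (i): symmetry of $\tilde K$ follows by swapping the summation indices $j$ and $k$ and using the symmetry of $K$, and continuity is clear since the $P_j$ are polynomials and $K \in C(\X \times \X)$. To prove positive definiteness, I fix $x_1,\dots,x_n \in \X$ and $a_1,\dots,a_n \in \R$, enlarge the configuration to $x_1,\dots,x_n,u_1,\dots,u_N$, keep the coefficient $a_i$ at $x_i$ and attach $c_k \coloneqq -\sum_{i=1}^n a_i P_k(x_i)$ at $u_k$. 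Because $\{P_k\}$ is the Lagrange basis of the $\Pi_{m-1}(\R^d)$-unisolvent set $\Xi$, interpolation at $\Xi$ reproduces $\Pi_{m-1}(\R^d)$, i.e.\ $P = \sum_{k=1}^N P(u_k) P_k$ for all $P \in \Pi_{m-1}(\R^d)$, and therefore
\[ \sum_{i=1}^n a_i P(x_i) + \sum_{k=1}^N c_k P(u_k) = \sum_{k=1}^N P(u_k) \sum_{i=1}^n a_i P_k(x_i) - \sum_{k=1}^N \Bigl( \sum_{i=1}^n a_i P_k(x_i) \Bigr) P(u_k) = 0 . \]
Hence the enlarged coefficients annihilate $\Pi_{m-1}(\R^d)$, so conditional positive definiteness of order $m$ of $K$ gives
\[ 0 \le \sum_{i,l=1}^n a_i a_l K(x_i,x_l) + 2 \sum_{i=1}^n \sum_{k=1}^N a_i c_k K(x_i,u_k) + \sum_{k,k'=1}^N c_k c_{k'} K(u_k,u_{k'}), \]
and substituting back $c_k = -\sum_i a_i P_k(x_i)$ and using the symmetry of $K$ shows that the right-hand side equals $\sum_{i,l=1}^n a_i a_l \tilde K(x_i,x_l)$, which proves (i).

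For part (ii): by (i) the kernel $\tilde K$ is symmetric, continuous and positive definite, so \eqref{mercer_1_OT} gives $\mathscr{D}_{\tilde K}^2(\mu,\nu) = \int_{\X^2} \tilde K \dx(\xi \otimes \xi)$. Inserting the definition of $\tilde K$, each correction term carries a factor $\int_\X P_j \dx \xi$ or $\int_\X P_k \dx \xi$; since $\mu$ and $\nu$ have the same moments up to order $m-1$ and $P_j,P_k \in \Pi_{m-1}(\R^d)$, these factors vanish and only $\int_{\X^2} K \dx(\xi \otimes \xi) = \mathscr{D}_K^2(\mu,\nu)$ survives. For part (iii): when $m=1$ we have $N = \binom{d}{0} = 1$, $\Pi_0(\R^d)$ consists of the constants, and the unique Lagrange polynomial with $P_1(u) = 1$ is $P_1 \equiv 1$, so the formula of (i) collapses to \eqref{cpd_1}; since $\mu,\nu \in \mathcal P(\X)$ share the zeroth moment, (ii) already gives $\mathscr{D}_{\tilde K}^2(\mu,\nu) = \mathscr{D}_K^2(\mu,\nu)$. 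For the potential I apply \eqref{dual_kern} to the positive definite kernel $\tilde K$: expanding $\int_\X \tilde K(x,\cdot) \dx \xi(x)$ and using $\xi(\X) = \mu(\X) - \nu(\X) = 0$ kills the terms involving $K(u,\cdot)$ and $K(u,u)$, leaving the numerator $\int_\X K(x,\cdot) \dx \mu(x) - \int_\X K(x,\cdot) \dx \nu(x) + c_\nu - c_\mu$ with $c_\mu,c_\nu$ from \eqref{eq:xx}; its norm in the denominator equals $\mathscr{D}_{\tilde K}(\mu,\nu)$, which by (ii) and \eqref{mercer_1_OT} coincides with the stated $H_K(\X)$-norm.

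The main obstacle is the bookkeeping in part (i): choosing the correcting coefficients $c_k$ so that the enlarged configuration annihilates $\Pi_{m-1}(\R^d)$, and then verifying that the resulting quadratic form is \emph{exactly} the one generated by $\tilde K$. Once that identity is in place, parts (ii) and (iii) are routine consequences of \eqref{mercer_1_OT} and \eqref{dual_kern}.
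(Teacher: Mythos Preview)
Your proof is correct and follows essentially the same route as the paper: for (i) you give the standard ``enlarge the node set by $\Xi$ and correct the coefficients'' argument, which is precisely the computation behind the reference \cite[Thm.~10.18]{Wendland:2004wd} that the paper cites in lieu of details; for (ii) and (iii) you use the same expansion via \eqref{mercer_1_OT} and \eqref{dual_kern} as the paper, only more compactly by working directly with $\xi = \mu-\nu$ instead of the three product measures $\mu\otimes\mu$, $\nu\otimes\nu$, $\mu\otimes\nu$.
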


\begin{proof}
	i) This part follows by straightforward computation, see \cite[Thm.~10.18]{Wendland:2004wd}.
	\\
	ii) Assuming that $\mu$ and $\nu$ have that same moments up to order $m-1$,
	i.e.,
	$$
	p_j = \int_{\X} P_j(x) \dx \mu(x) = \int_{\X} P_j(x) \dx \nu(x), \quad j=1,\ldots,N,
	$$
	and abbreviating for the symmetric kernels
	$$
	c_{\mu,j} \coloneqq \int_{\X} K(u_j,y) \dx \mu(y) ,
	\quad
	c_{\nu,j} \coloneqq \int_{\X} K(u_j,y) \dx \nu(y) ,
	$$
	we obtain by definition of $\tilde K$ that
	\begin{align*}
	&\mathscr{D}_{\tilde K}^2(\mu,\nu)\\ 
	=& 
	\int_{\X^2} \tilde K \dx(\mu \otimes \mu) + 
	\int_{\X^2} \tilde K \dx(\nu \otimes \nu) - 
	2 \int_{\X^2} \tilde K \dx(\mu \otimes \nu)\\
	=&\mathscr{D}_{\tilde K}^2(\mu,\nu) 
	- \sum_{j=1}^N p_j(c_{\mu,j} + c_{\nu,j}) - \sum_{k=1}^N p_j(c_{\mu,k} + c_{\nu,k})
	+ 2 \sum_{j,k=1}^N p_j p_k K(u_j,u_k) \\
	&\quad +
	\sum_{j=1}^N p_j(c_{\mu,j} + c_{\nu,j}) + \sum_{k=1}^N p_j(c_{\mu,k} + c_{\nu,k}) 
	- 2 
	\sum_{j,k=1}^N p_j p_k K(u_j,u_k)\\
	=&
	\mathscr{D}_{K}^2(\mu,\nu). 
	\end{align*}
	iii) Let $m=1$. 
	Then we have for the optimal dual potential in \eqref{dual_kern} related to $\mathscr{D}_{\tilde K}$ that
	\begin{align}
	\hat \varphi_{\tilde K} 
	&=  \frac{\int_\X \tilde K(x,\cdot) \dx \mu(x) - \int_\X \tilde K(x,\cdot) \dx \nu(x)}{\Vert \int_\X \tilde K(x,\cdot) \dx \mu(x) 
		- \int_\X \tilde K(x,\cdot) \dx \nu(x)\Vert_{H_K(\X)}} \\
	&=
	\frac{\int_\X  K(x,\cdot) \dx \mu(x) - \int_\X  K(x,\cdot) \dx \nu(x) + c_\nu - c_\mu}
	{\Vert \int_\X  K(x,\cdot) \dx \mu(x) - \int_\X  K(x,\cdot) \dx \nu(x) + c_\nu - c_\mu\Vert_{H_K(\X)}}.
	\end{align}
\end{proof}

\section{Optimal transport and Wasserstein distances}\label{sec:OT}
The following discussion about optimal transport is based on \cite{Ambrosio,CP2019,S2015}, 
where many aspects simplify due to the compactness of $\X$ and the assumption that the cost $c$ is Lipschitz continuous.
Let $\mu, \nu \in \mathcal P(\X)$ and $c \in C(\X \times \X)$ be a non-negative, symmetric and Lipschitz continuous function. 
Then, the \emph{Kantorovich problem of optimal transport} (OT) reads
\begin{equation}\label{Monge_Kantorovich_problem}
\OT(\mu,\nu) \coloneqq \inf_{\pi \in\Pi(\mu,\nu)} \int_{\X^2} c\, \dx \pi,
\end{equation}
where $\Pi(\mu,\nu)$ denotes the set of joint probability measures $\pi$ on $\X^2$ with marginals $\mu$ and $\nu$.
In our setting, the OT functional \smash{$\pi \mapsto  \int_{\X^2} c\, \dx \pi$} is weakly continuous, 
\eqref{Monge_Kantorovich_problem} has a solution and every such minimizer $\hat \pi$ is called optimal transport plan.
In general, we can not expect the optimal transport plan to be unique.
However, if $\X$ is a compact subset of a separable Hilbert space, $c(x,y) = \Vert x-y \Vert_\X^p$, $p\in (1,\infty)$, 
and either $\mu$ or $\nu$ is \emph{regular}, see \cite[Def.~6.2.2]{Ambrosio} for the technical definition, then \eqref{Monge_Kantorovich_problem} has a unique solution.
Instead of giving the exact definition, we want to remark that for $\X = \R^d$ the regular measures are precisely the ones which have a density with respect to the Lebesgue measure.

The \emph{$c$-transform} $\varphi^{c} \in C(\X)$ of $\varphi \in C(\X)$ is defined as 
\begin{equation*}
\varphi^{c}(y) = \min_{x\in\X}\bigl\{c(x,y)-\varphi(x)\bigr\}.
\end{equation*}
Note that $\varphi^{c}$ has the same Lipschitz constant as $c$.
A function $\varphi^{c} \in C(\X)$ is called $c$-concave if it is the $c$-transform of some function $\varphi \in C(\X)$.

The dual formulation of the OT problem \eqref{Monge_Kantorovich_problem} reads
\begin{equation}\label{Wdual}
\OT(\mu,\nu) = \max_{ \substack{(\varphi,\psi)\in C(\X)^2\\
		\varphi(x)+\psi(y)\leq c(x,y)}}
\int_{\X}\varphi \dx \mu + \int_{\X} \psi \dx \nu.
\end{equation}
Maximizing pairs are essentially of the form $(\varphi,\psi) = (\hat \varphi, \hat \varphi^c)$ for some $c$-concave function $\hat \varphi$ and fulfill $\hat \varphi(x) + \hat \varphi^c(y) = c(x,y)$ in $\supp(\hat \pi)$, where $\hat \pi$ is any optimal transport plan.
The function $\hat \varphi$ is called (Kantorovich) potential for the couple $(\mu,\nu)$.
If \smash{$(\hat \varphi,\hat \psi)$} is an optimal pair, clearly also $(\hat \varphi -C,\hat \psi +C)$ with $C \in \R$ is optimal 
and manipulations outside of $\supp(\mu)$ and $\supp(\nu)$ do not change the functional value.
But even if we exclude such manipulations, the optimal dual potentials are in general not unique as Example~\ref{Ex:1} shows.

\begin{example}\label{Ex:1}
	We choose $\X = [0,1]$, $c(x,y) = \vert x-y \vert$, $\mu = \delta_{0}/2 + \delta_{1}/2$ and $\nu = \delta_{0.1}/2 + \delta_{0.9}/2$.
	Then, $\OT(\mu,\nu) = 0.1$ with the unique optimal transport plan $\hat \pi = \frac12 \delta_{0,0.1} + \frac12 \delta_{1,0.9}$. 
	Optimal dual potentials are  given by
	\[\hat \varphi_1(x) = \begin{cases}
	0.1 - x & \text{ for } x \in [0, 0.1],\\
	x - 0.9 & \text{ for } x \in [0.9, 1],\\
	0 & \text{ else,}
	\end{cases}
	\quad \text { and } \quad
	\hat \varphi_2(x) = \begin{cases}
	0.2 - x & \text{ for } x \in [0, 0.2],\\
	x - 0.9 & \text{ for } x \in [0.9, 1],\\
	0 & \text{ else.}
	\end{cases}\] 
	Clearly, these potentials do not differ only by a constant.
\end{example}

\begin{remark}\label{rem:ExtW}
	Note that the space $C(\X)^2$ in the dual problem could also be replaced with $C(\supp(\mu)) \times C(\supp(\nu))$.
	Using the Tietze extension theorem, any feasible point of the restricted problem can 
	be extended to a feasible point of the original problem and hence the problems coincide.
	If the problem is restricted, all other concepts have to be adapted accordingly.
\end{remark}

For $p \in [1,\infty)$,
the \emph{$p$-Wasserstein distance}  $W_p$ between $\mu,\nu \in \mathcal P(\X)$ is defined by
\begin{align} \label{eq:OTprimal}
W_p(\mu,\nu) \coloneqq \biggl( \min_{\pi \in \Pi(\mu,\nu)} 
\int_{\X^2} \dist(x,y)^p \mathrm{d} \pi(x,y) \biggr)^\frac{1}{p}.
\end{align}
It is a metric on $\mathcal P (\X)$, which metrizes the weak topology.
Indeed, due to compactness of $\X$, we have that $\mu_k \weakly \mu$
if and only if $\lim_{k \rightarrow \infty} W_p (\mu_k, \mu) = 0$.

For $1 \le p \le q < \infty$ it holds $W_p \le W_q$.
The distance $W_1$ is also called
\emph{Kantorovich-Rubinstein distance} or \emph{Earth's mover distance}.
Here, it holds $\varphi^c = -\varphi$ and the dual problem reads
\begin{equation} \label{wasser_1_dual}
W_1(\mu,\nu)
= \max_{|\varphi|_{\Lip(\X)} \le 1}  \int_{\X} \varphi \dx \xi ,\quad \xi \coloneqq \mu -  \nu,
\end{equation}
where the maximum is taken over all Lipschitz continuous functions with Lipschitz constant bounded by 1.
This looks similar to the discrepancy \eqref{equiv_1_OT}, but the space of test functions is larger for $W_1$.

The distance $W_1$ is related to $W_p$ by
$$
W_1(\mu,\nu) \le W_p(\mu,\nu) \le C W_1(\mu,\nu)^\frac{1}{p}
$$
with a constant $0 \le C < \infty$ depending on $\diam(\X)$ and $p$. 


\section{Regularized optimal transport} \label{sec:sinkhorn}
In this section, we give a self-contained introduction to continuous \emph{regularized optimal transport}.
For $ \mu, \nu \in {\mathcal P}(\X)$ and $\varepsilon > 0$, regularized OT is defined as
\begin{equation} \label{sinkhorn} 
\OT_{\varepsilon}(\mu,\nu) 
\coloneqq  
\min_{\pi \in \Pi(\mu,\nu)} 
\,
\Big\{
\int_{\X^2}  c\, \mathrm{d} \pi
+ \varepsilon \mathrm{KL} (\pi,\mu \otimes \nu) \Big\}.
\end{equation}
Compared to the original $\OT$ problem, we will see in the numerical part that $\OT_{\varepsilon}$ can be  efficiently solved numerically, see also~\cite{CP2019}.
Moreover, $\OT_{\varepsilon}$ has the following properties.

\begin{lemma}\label{lem:1}
	\begin{itemize}
		\item[i)] There is a unique minimizer $\hat \pi_\varepsilon \in  {\mathcal P}(\X^2)$ of \eqref{sinkhorn} with finite value.
		\item[ii)] The function $\OT_\varepsilon$ is weakly continuous and Fr\'echet differentiable.
		\item[iii)] For any $\mu, \nu \in \mathcal P(\X)$ and $\varepsilon_1, \varepsilon_2 \in [0,\infty]$ with  $\varepsilon_1 \leq \varepsilon_2$ it holds
		\[\OT_{\varepsilon_1}(\mu,\nu) \leq \OT_{\varepsilon_2}(\mu,\nu).\]
	\end{itemize}
\end{lemma}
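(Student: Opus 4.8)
The plan is to treat the three assertions in the order i), iii), ii), since ii) is the substantial one. For i), observe that $\mu\otimes\nu\in\Pi(\mu,\nu)$ gives $\int_{\X^2}c\dx(\mu\otimes\nu)\le\max_{\X^2}c<\infty$ and $\mathrm{KL}(\mu\otimes\nu,\mu\otimes\nu)=0$, so the infimum in \eqref{sinkhorn} is finite. Existence then follows by the direct method: $\Pi(\mu,\nu)$ is a weakly closed subset of the weakly compact set $\mathcal P(\X^2)$ (weak closedness being clear by testing the marginal conditions against functions in $C(\X)$), the linear functional $\pi\mapsto\int_{\X^2}c\dx\pi$ is weakly continuous since $c\in C(\X\times\X)$, and $\pi\mapsto\mathrm{KL}(\pi,\mu\otimes\nu)$ is weakly lsc by the recalled properties of $f$-divergences; hence the objective is weakly lsc on a weakly compact set and attains its minimum $\hat\pi_\varepsilon$. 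Since $c\ge0$, any minimizer has finite $\mathrm{KL}$, hence is absolutely continuous w.r.t.\ $\mu\otimes\nu$ by \eqref{eq:FDiv}; uniqueness then follows from strict convexity of $\mathrm{KL}(\cdot,\mu\otimes\nu)$ (strict convexity of $x\mapsto x\log x-x+1$ applied to the densities of two minimizers, the cost term being linear and $\Pi(\mu,\nu)$ convex). Claim iii) is elementary: with $\hat\pi_{\varepsilon_2}$ optimal for $\varepsilon_2$ and $\varepsilon_1\le\varepsilon_2$, using $\mathrm{KL}\ge0$,
\begin{align*}
\OT_{\varepsilon_1}(\mu,\nu)&\le\int_{\X^2}c\dx\hat\pi_{\varepsilon_2}+\varepsilon_1\,\mathrm{KL}(\hat\pi_{\varepsilon_2},\mu\otimes\nu)\\
&\le\int_{\X^2}c\dx\hat\pi_{\varepsilon_2}+\varepsilon_2\,\mathrm{KL}(\hat\pi_{\varepsilon_2},\mu\otimes\nu)=\OT_{\varepsilon_2}(\mu,\nu),
\end{align*}
and the boundary cases $\varepsilon_1=0$, $\varepsilon_2=\infty$ follow the same way once $\OT_0$, $\OT_\infty$ are identified with $\OT$ and $\int_{\X^2}c\dx(\mu\otimes\nu)$.

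For ii) the key device is the dual representation. Applying Fenchel--Rockafellar duality \eqref{primal-dual} to \eqref{sinkhorn}, dualizing the marginal constraints by potentials $(\varphi,\psi)\in C(\X)^2$ and computing the conjugate of $\pi\mapsto\int c\dx\pi+\varepsilon\mathrm{KL}(\pi,\mu\otimes\nu)$ over $\mathcal M^+(\X^2)$ pointwise (with $f^*(x)=e^x-1$, the pointwise maximizer is the density $e^{(\varphi(x)+\psi(y)-c(x,y))/\varepsilon}$), one obtains
\begin{align*}
\OT_\varepsilon(\mu,\nu)=\max_{\varphi,\psi\in C(\X)}\biggl\{&\int_\X\varphi\dx\mu+\int_\X\psi\dx\nu\\
&-\varepsilon\int_{\X^2}\bigl(e^{(\varphi(x)+\psi(y)-c(x,y))/\varepsilon}-1\bigr)\dx\mu(x)\dx\nu(y)\biggr\},
\end{align*}
the qualification condition holding because $\mu\otimes\nu$ lies in the domain and $c$ is continuous, so the dual value equals the primal one and is attained. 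Writing the first-order optimality condition for the concave dual functional in $\varphi$ gives $\int_\X e^{(\hat\varphi_\varepsilon(x)+\hat\psi_\varepsilon(y)-c(x,y))/\varepsilon}\dx\nu(y)=1$ for $\mu$-a.e.\ $x$, so $\hat\varphi_\varepsilon$ may be taken equal, \emph{everywhere} on $\X$, to the softmin $x\mapsto-\varepsilon\log\int_\X e^{(\hat\psi_\varepsilon(y)-c(x,y))/\varepsilon}\dx\nu(y)$ (this leaves the dual value unchanged); consequently $\hat\varphi_\varepsilon,\hat\psi_\varepsilon$ inherit the Lipschitz constant of $c$, and after the harmless normalization $\hat\varphi_\varepsilon(x_0)=0$ at a fixed $x_0$ they range in a fixed equi-Lipschitz and equibounded — hence, by Arzelà--Ascoli, compact — subset $\mathcal K\subset C(\X)^2$.

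Weak continuity now follows from the dual formula. For $\mu_k\weakly\mu$, $\nu_k\weakly\nu$ we have $\mu_k\otimes\nu_k\weakly\mu\otimes\nu$, so for each fixed $(\varphi,\psi)$ the dual objective at $(\mu_k,\nu_k)$ converges to that at $(\mu,\nu)$; taking the supremum yields $\liminf_k\OT_\varepsilon(\mu_k,\nu_k)\ge\OT_\varepsilon(\mu,\nu)$. For the reverse inequality pass to a subsequence realizing the $\limsup$, write $\OT_\varepsilon(\mu_k,\nu_k)$ as the dual objective at the optimal $(\hat\varphi_k,\hat\psi_k)\in\mathcal K$, extract a uniformly convergent further subsequence $(\hat\varphi_k,\hat\psi_k)\to(\varphi^\ast,\psi^\ast)$, and pass to the limit in every term (uniform convergence of the integrands, including $e^{(\hat\varphi_k(x)+\hat\psi_k(y)-c(x,y))/\varepsilon}$ since $t\mapsto e^{t/\varepsilon}$ is Lipschitz on bounded sets, combined with weak convergence of $\mu_k$, $\nu_k$, $\mu_k\otimes\nu_k$); this gives $\OT_\varepsilon(\mu_k,\nu_k)\to$ the dual objective of $(\mu,\nu)$ at $(\varphi^\ast,\psi^\ast)$, which is $\le\OT_\varepsilon(\mu,\nu)$, so $\limsup_k\OT_\varepsilon(\mu_k,\nu_k)\le\OT_\varepsilon(\mu,\nu)$. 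For Fréchet differentiability one exploits that, because the optimality identity holds everywhere, perturbing $\mu$ by a signed measure $\rho$ with $\rho(\X)=0$ makes the bilinear $\mu\otimes\nu$-term contribute nothing to the first variation; bounding the dual objective of $(\mu+t\rho,\nu)$ from below at $(\hat\varphi_\varepsilon,\hat\psi_\varepsilon)$ and from above at the perturbed optimal potentials, and using the continuous dependence of the potentials on $(\mu,\nu)$ obtained exactly as in the continuity proof, identifies the derivative with $\rho\mapsto\int_\X\hat\varphi_\varepsilon\dx\rho$ (and symmetrically in $\nu$).

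The routine parts are i) and iii). The real work sits in ii): setting up the Fenchel--Rockafellar duality with no duality gap and an attained dual maximizer, and — above all — extracting enough regularity of the optimal potentials (the Lipschitz bound, their uniqueness up to an additive constant, and their continuous dependence on $(\mu,\nu)$) to drive both the upper-semicontinuity half of the continuity statement and the Fréchet differentiability. This is precisely where compactness via Arzelà--Ascoli, rather than a formal computation, is indispensable.
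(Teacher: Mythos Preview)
Your treatment of i) and iii) is correct and essentially identical to the paper's: feasibility of $\mu\otimes\nu$, weak lower semicontinuity plus weak compactness of $\Pi(\mu,\nu)$ for existence, strict convexity of $\mathrm{KL}(\cdot,\mu\otimes\nu)$ for uniqueness, and the two-line monotonicity argument using $\mathrm{KL}\ge 0$.

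For ii) the approaches diverge in presentation rather than substance. The paper does not prove ii) in place: it simply points forward to the dual formulation (its Proposition~\ref{prop:dual}) and cites \cite[Prop.~2]{FSVATP2018}. You instead supply a self-contained argument, and in doing so you anticipate machinery that the paper only develops \emph{after} this lemma: the Fenchel--Rockafellar dual (Proposition~\ref{prop:dual}), the Lipschitz bound and range estimate for the softmin operator $T_{\mu,\varepsilon}$ (Lemma~\ref{prop:PropT}~i)), and the existence of optimal potentials via Arzel\`a--Ascoli (Proposition~\ref{prop:existence}). Your weak-continuity proof --- lower semicontinuity from the supremum representation, upper semicontinuity by extracting uniformly convergent optimal potentials from the equi-Lipschitz compact set $\mathcal K$ and passing to the limit --- is the natural argument and is correct. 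The Fr\'echet-differentiability sketch via the envelope theorem (sandwiching the perturbed value between the dual objective at the old and the new potentials, then invoking continuous dependence of the potentials) is the right idea and is what underlies the cited result, though as written it remains a sketch; in particular one should be explicit about the space and norm in which differentiability is claimed and about why the second-order remainder is $o(\|\rho\|)$. What your route buys is self-containment; what the paper's route buys is brevity, at the cost of a forward reference and an external citation.
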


\begin{proof}
	i) First, note that $\mu \otimes \nu$ is a feasible point and hence the infimum is finite.
	Existence of minimizers follows as the functional is weakly lsc and $\Pi(\mu,\nu) \subset \mathcal P(\X^2)$ is weakly compact.
	Uniqueness follows since $\mathrm{KL}(\cdot, \mu \otimes\nu)$ is strictly convex.
	
	ii) The proof uses the dual formulation in Proposition \ref{prop:dual}, see~\cite[Prop.~2]{FSVATP2018}.
	
	iii) Let $\hat \pi_{\varepsilon_2}$ be the minimizer for $\OT_{\varepsilon_2}(\mu,\nu)$.
	Then, it holds
	\begin{align*}
	\OT_{\epsilon_2}(\mu,\nu) &= \int_{\X^2}  c\, \dx \hat \pi_{\varepsilon_2}
	+ \varepsilon_2 \mathrm{KL} (\hat \pi_{\varepsilon_2},\mu \otimes \nu)\\
	&\geq \int_{\X^2}  c\, \dx \hat \pi_{\varepsilon_2}
	+ \varepsilon_1 \mathrm{KL} (\hat \pi_{\varepsilon_2},\mu \otimes \nu) \geq \OT_{\epsilon_1}(\mu,\nu).
	\end{align*}
\end{proof}

Note that in special cases, e.g., for absolutely continuous measures, see \cite{CDPS17, Leo12}, 
it is possible to show convergence of the optimal solutions $\hat \pi_\varepsilon$ to an optimal solution of $\OT(\mu,\nu)$ as $\varepsilon \to 0$.
However, we are not aware of a fully general result.
An extension of entropy regularization to unbalanced $\OT$ is discussed in \cite{CPSV17}.

Originally, entropic regularization was proposed in \cite{C2013} for \emph{discrete} probability measures with the negative entropy $E$, see also \cite{Peyre2015},
\[\widetilde{\OT}_\varepsilon (\mu,\nu) \coloneqq \min_{\pi \in \Pi(\mu,\nu)} \Bigl\{ \int_{\X^2}  c\, \mathrm{d} \pi + \varepsilon E(\pi) \Bigr\}, 
\quad 
E(\pi) \coloneqq \sum_{i,j=1}^n  \log( p_{ij}) p_{ij} = \mathrm{KL} (\pi,\lambda \otimes \lambda),
\]
where $\lambda$ denotes the counting measure.
For $\pi \in \Pi(\mu,\nu)$ it is easy to check that
$$E(\pi) = \mathrm{KL}(\pi,\mu\otimes \nu) + \sum_{i,j = 1}^n \log(\mu_i \nu_j) \mu_i \nu_j
= \mathrm{KL}(\pi,\mu\otimes \nu) +\mathrm{KL}(\mu\otimes \nu,\lambda \otimes \lambda),
$$
i.e., the minimizers are independent of the chosen regularization.
For non-discrete measures, special care is necessary as the following remark shows.

\begin{remark}\upshape\textbf{$\mathrm{(KL}(\pi,\mu\otimes\nu)$ versus  $E(\pi)$ regularization$\mathrm )$} \label{rem:KL-versus-entropy}
	Since the entropy is only defined for measures with densities, we consider compact sets $\X \subset \R^d$ equipped with the normalized Lebesgue measure $\lambda$ and $\mu,\nu \ll \lambda$ with densities $\sigma_\mu, \sigma_\nu \in L^1(\X)$.
	For $\pi \ll \lambda \otimes \lambda$ with density $\sigma_\pi$ the entropy is defined by
	$$
	E(\pi) = \int_{\X^2} \log(\sigma_\pi) \, \sigma_\pi \dx (\lambda \otimes \lambda) = \mathrm{KL}(\pi,\lambda \otimes \lambda).
	$$
	Note that for any $\pi \in \Pi(\mu,\nu)$ we have
	\[ \pi \ll \mu \otimes \nu  \quad \Longleftrightarrow \quad \pi \ll \lambda \otimes \lambda,\]
	where the right implication follows directly and the left one can be seen as follows:
	If $\pi \ll \lambda \otimes \lambda$ with density $\sigma_\pi \in L^1(\X \times \X)$, then
	\[0 = \int_{\{z \in \X: \sigma_\mu(z) = 0\}} \int_{\X} \sigma_\pi(x,y) \dx y \dx x.\]
	Consequently, we get $\sigma_\pi(x,y) = 0$ a.e.~on $\{z \in \X: \sigma_\mu(z) = 0\} \times \X$ (for any representative of $\sigma_\mu$).
	The same reasoning is applicable to $\X \times \{z \in \X: \sigma_\nu(z) = 0\}$.
	Thus,
	\[\pi = \sigma_\pi \, (\lambda\otimes\lambda) = \frac{\sigma_\pi(x,y)}{\sigma_\mu(x) \sigma_\nu(y)} \, (\mu \otimes \nu),\]
	where the quotient is defined as zero if $\sigma_\mu$ or $\sigma_\nu$ vanish.
	Hence, the left implication also holds true.
	
	If $\mathrm{KL} (\mu \otimes \nu, \lambda \otimes \lambda) < \infty$, we conclude for any $\pi \ll \lambda \otimes \lambda$ with $\pi \in \Pi(\mu,\nu)$ that the following expressions are well-defined 
	\begin{align*}
	&\mathrm{KL} (\pi,\lambda \otimes \lambda) - \mathrm{KL} (\mu \otimes \nu, \lambda \otimes \lambda) \label{care}\\
	&=\int_{\X^2} \log(\sigma_\pi) \, \dx \pi - \int_{\X^2} \log\Bigl(\frac{\dx(\mu \otimes \nu)}{\dx (\lambda \otimes \lambda)} \Bigr) \, \dx (\mu \otimes \nu) \\
	&=\mathrm{KL} (\pi,\mu \otimes \nu) + \int_{\X^2} \log\bigl(\sigma_\mu(x) \sigma_\nu(y) \bigr) \, \dx \pi(x,y) 
	- \int_{\X^2} \log\bigl(\sigma_\mu(x) \sigma_\nu(y) \bigr) \, \dx \mu(x) \dx \nu(y)\\
	&=\mathrm{KL} (\pi,\mu \otimes \nu).
	\end{align*}
	Consequently, in this case we also have  \smash{$\widetilde{\OT}_\varepsilon (\mu,\nu) ={\OT}_\varepsilon (\mu,\nu) + \varepsilon \mathrm{KL}(\mu\otimes \nu,\lambda \otimes \lambda)$}.
	The crux is the condition $\mathrm{KL} (\mu \otimes \nu, \lambda \otimes \lambda) < \infty$, which is equivalent to $\mu,\nu$ having finite entropy, 
	i.e.,  $\sigma_\mu, \sigma_\nu$ are in a so-called Orlicz space $L \log L$ \cite{NR2013}.
	The authors in \cite{CLMW19} considered the entropy as regularization (with continuous cost function) and
	pointed out that \smash{$\widetilde{\OT}_\varepsilon (\mu,\nu)$} admits a (finite) minimizer exactly in this case.
	However, we have seen that we can avoid this existence trouble if we regularize with $\mathrm{KL} (\pi,\mu \otimes \nu)$ instead, 
	which therefore seems to be a more natural choice.
	A comparison of the settings and a more general existence discussion based on merely continuous cost functions can be also found in \cite{MG2019}.	
\end{remark}

Another possibility is to use quadratic regularization instead, see \cite{LMM19} for more details.
In connection with discrepancies, we are especially interested in the limiting case $\varepsilon \rightarrow \infty$. 
The next proposition is basically known, see \cite{CP2019,FSVATP2018}.
However, we have not found it in this generality in the literature.

\begin{proposition}\label{prop:conv}
	\begin{itemize}
		\item[i)]	It holds $\lim_{\varepsilon \to \infty} \OT_\varepsilon(\mu,\nu) = \OT_\infty(\mu,\nu)$, 
		where
		\[\OT_\infty(\mu,\nu) \coloneqq \int_{\X^2} c\, \dx (\mu \otimes \nu).\]
		\item[ii)]
		It holds $\lim_{\varepsilon \to 0} \OT_\varepsilon(\mu,\nu) = \OT(\mu,\nu)$.
	\end{itemize}
\end{proposition}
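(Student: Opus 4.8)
The plan is to handle the two limits separately, using monotonicity from Lemma~\ref{lem:1}~iii) in both cases to reduce convergence to a single inequality.

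\emph{Part i), the limit $\varepsilon \to \infty$.} First I would observe that $\mu \otimes \nu \in \Pi(\mu,\nu)$ is feasible with $\mathrm{KL}(\mu\otimes\nu,\mu\otimes\nu)=0$, so
\[
\OT_\varepsilon(\mu,\nu) \le \int_{\X^2} c \dx(\mu\otimes\nu) = \OT_\infty(\mu,\nu)
\]
for every $\varepsilon>0$. Combined with Lemma~\ref{lem:1}~iii), the net $\varepsilon \mapsto \OT_\varepsilon(\mu,\nu)$ is nondecreasing and bounded above by $\OT_\infty(\mu,\nu)$, hence converges to some $\ell \le \OT_\infty(\mu,\nu)$. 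For the reverse inequality, let $\hat\pi_\varepsilon$ be the (unique) minimizer. Since $\X^2$ is compact, $\mathcal P(\X^2)$ is weakly compact, so along a subsequence $\hat\pi_{\varepsilon_k} \weakly \pi_\infty$ for some $\pi_\infty \in \mathcal P(\X^2)$; the marginal constraints pass to the weak limit, so $\pi_\infty \in \Pi(\mu,\nu)$. Because $c$ is continuous, $\int c \dx\hat\pi_{\varepsilon_k} \to \int c \dx\pi_\infty$. The key point is that $\mathrm{KL}(\hat\pi_{\varepsilon_k},\mu\otimes\nu) \ge 0$ forces $\mathrm{KL}(\hat\pi_{\varepsilon_k},\mu\otimes\nu) \to 0$: indeed $0 \le \varepsilon_k\, \mathrm{KL}(\hat\pi_{\varepsilon_k},\mu\otimes\nu) = \OT_{\varepsilon_k}(\mu,\nu) - \int c\dx\hat\pi_{\varepsilon_k} \le \OT_\infty(\mu,\nu) - \int c\dx\hat\pi_{\varepsilon_k}$, which stays bounded, so $\mathrm{KL}(\hat\pi_{\varepsilon_k},\mu\otimes\nu) = O(1/\varepsilon_k) \to 0$. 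By weak lower semicontinuity of $\mathrm{KL}(\cdot,\mu\otimes\nu)$ we get $\mathrm{KL}(\pi_\infty,\mu\otimes\nu) = 0$, hence $\pi_\infty = \mu\otimes\nu$. Therefore $\ell = \lim_k \OT_{\varepsilon_k}(\mu,\nu) \ge \lim_k \int c\dx\hat\pi_{\varepsilon_k} = \int c\dx(\mu\otimes\nu) = \OT_\infty(\mu,\nu)$, and equality follows.

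\emph{Part ii), the limit $\varepsilon \to 0$.} By Lemma~\ref{lem:1}~iii) the map $\varepsilon \mapsto \OT_\varepsilon(\mu,\nu)$ is nondecreasing and bounded below by $\OT(\mu,\nu) = \OT_0(\mu,\nu)$, so it converges as $\varepsilon \to 0$ to some $\ell \ge \OT(\mu,\nu)$. For the matching upper bound, let $\hat\pi_0$ be an optimal plan for $\OT(\mu,\nu)$ and let $\hat\pi_\varepsilon$ be the minimizer for $\OT_\varepsilon$. Feasibility of $\hat\pi_0$ gives
\[
\OT_\varepsilon(\mu,\nu) \le \int_{\X^2} c\dx\hat\pi_0 + \varepsilon\, \mathrm{KL}(\hat\pi_0,\mu\otimes\nu) = \OT(\mu,\nu) + \varepsilon\, \mathrm{KL}(\hat\pi_0,\mu\otimes\nu).
\]
If $\mathrm{KL}(\hat\pi_0,\mu\otimes\nu) < \infty$, letting $\varepsilon \to 0$ yields $\ell \le \OT(\mu,\nu)$ and we are done. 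The main obstacle is precisely that an optimal $\hat\pi_0$ need not be absolutely continuous with respect to $\mu\otimes\nu$ (e.g. a deterministic transport plan concentrated on a graph), so $\mathrm{KL}(\hat\pi_0,\mu\otimes\nu)$ may be $+\infty$ and the estimate above is vacuous. To circumvent this I would approximate $\hat\pi_0$ in the $\mathrm{KL}$-finite class: using a block/mollification argument — partition $\X^2$ into small measurable rectangles $A_i \times B_j$ of diameter $\le \delta$ and replace $\hat\pi_0$ by the plan $\pi_\delta$ that on each cell redistributes the mass $\hat\pi_0(A_i\times B_j)$ as the product $\frac{1}{\mu(A_i)\nu(B_j)}\hat\pi_0(A_i\times B_j)\,(\mu|_{A_i})\otimes(\nu|_{B_j})$ — one checks $\pi_\delta \in \Pi(\mu,\nu)$, that $\pi_\delta \ll \mu\otimes\nu$ with $\mathrm{KL}(\pi_\delta,\mu\otimes\nu) < \infty$ (the density is a bounded step function on each cell modulo the $\mu\otimes\nu$-measures of the cells, and $-\log$ of those is integrable), and that $\int c\dx\pi_\delta \to \int c\dx\hat\pi_0$ as $\delta \to 0$ by uniform continuity of $c$ on the compact set $\X^2$. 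Then for each fixed $\delta$,
\[
\limsup_{\varepsilon\to 0}\OT_\varepsilon(\mu,\nu) \le \int_{\X^2} c\dx\pi_\delta + \lim_{\varepsilon\to 0}\varepsilon\,\mathrm{KL}(\pi_\delta,\mu\otimes\nu) = \int_{\X^2} c\dx\pi_\delta,
\]
and sending $\delta \to 0$ gives $\ell \le \OT(\mu,\nu)$. Together with the lower bound from monotonicity this proves $\lim_{\varepsilon\to0}\OT_\varepsilon(\mu,\nu) = \OT(\mu,\nu)$.

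I expect the block-approximation step in Part ii) to be the genuinely delicate part; an alternative would be to quote a $\Gamma$-convergence result for the functionals $\pi \mapsto \int c\dx\pi + \varepsilon\,\mathrm{KL}(\pi,\mu\otimes\nu)$ on $\Pi(\mu,\nu)$, but since the excerpt emphasizes self-containedness, the direct construction above is the natural route.
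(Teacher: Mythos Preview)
Your Part~i) is essentially the paper's argument, only organised slightly differently: you invoke monotonicity from Lemma~\ref{lem:1}~iii) to know the limit exists, whereas the paper works directly with $\limsup$/$\liminf$; both then conclude via weak lower semicontinuity of $\mathrm{KL}$ that any weak cluster point of $\hat\pi_\varepsilon$ must equal $\mu\otimes\nu$.

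Part~ii) is where you genuinely diverge. The paper does \emph{not} argue on the primal side at all; it defers the proof to Proposition~\ref{lem:PropPot}~ii), which works entirely in the dual. There the optimal potentials $(\hat\varphi_\varepsilon,\hat\psi_\varepsilon)$ are uniformly Lipschitz by Lemma~\ref{prop:PropT}, Arzel\`a--Ascoli produces subsequential limits $(\hat\varphi_0,\hat\psi_0)$, and the relation $T_{\mu,\varepsilon}(\varphi)\to\min_{y\in\supp(\mu)}\{c(\cdot,y)-\varphi(y)\}$ as $\varepsilon\to 0$ shows these limits are feasible for the unregularized dual \eqref{Wdual}; the identity $\OT_\varepsilon(\mu,\nu)=\int\hat\varphi_\varepsilon\,d\mu+\int\hat\psi_\varepsilon\,d\nu$ then passes to the limit and sandwiches against $\OT(\mu,\nu)$. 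Your route instead stays primal: monotonicity gives the lower bound for free, and the block-averaging $\pi_\delta$ furnishes a recovery sequence with finite relative entropy. This is correct---compactness of $\X$ makes the partition finite, so the density of $\pi_\delta$ with respect to $\mu\otimes\nu$ is a bounded simple function and $\mathrm{KL}(\pi_\delta,\mu\otimes\nu)$ is a finite sum---and is in fact the standard $\Gamma$-convergence argument for entropic OT. The trade-off: the paper's dual approach simultaneously delivers information on the limiting potentials, which it needs later (Corollaries~\ref{cor:s-d} and~\ref{cor:conn_disr}); your primal argument is more self-contained for Proposition~\ref{prop:conv} alone and avoids setting up the dual machinery, but would not by itself supply those potential limits.
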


\begin{proof}
	i) For $\pi = \mu \otimes \nu$, we have 
	\[
	\int_{\X^2}  c\,\dx \pi 
	+ \varepsilon \mathrm{KL} (\pi,\mu \otimes \nu)  = \OT_\infty(\mu,\nu)
	\]
	and consequently 
	$\limsup_{\varepsilon \to \infty} \OT_\varepsilon(\mu,\nu) \leq \OT_\infty(\mu,\nu)$.
	In particular, the optimal transport plan $\hat \pi_\varepsilon$ satisfies
	$\limsup_{\varepsilon \to \infty} \varepsilon \text{KL}(\hat \pi_\varepsilon ,  \mu \otimes \nu) \leq \OT_\infty(\mu,\nu)$.
	Since $\text{KL}$ 
	is weakly lsc, 
	we conclude that the sequence of minimizers  $\hat \pi_\varepsilon$ satisfies $\hat \pi_\varepsilon \weakly \mu \otimes \nu$ as $\varepsilon \to \infty$.
	Hence, we obtain the desired result from
	\begin{align*}
	\liminf_{\varepsilon \to \infty} \OT_\varepsilon(\mu,\nu) 
	&= 
	\liminf_{\varepsilon \to \infty} \int_{\X^2}  c\,\dx \hat \pi_\varepsilon + \varepsilon \mathrm{KL} (\hat \pi_\varepsilon,\mu \otimes \nu)\\
	&\geq  
	\liminf_{\varepsilon \to \infty} \int_{\X^2}  c\,\dx \hat \pi_\varepsilon  = \OT_\infty(\mu,\nu).
	\end{align*}
	\noindent
	
	ii) This part is more involved and follows from of Proposition~\ref{lem:PropPot} ii).
\end{proof}

Similar as $\OT$ in \eqref{Wdual}, its regularized version $\OT_\varepsilon$ can be written in dual form, see~\cite{CPSV17,CLMW19}.

\begin{proposition}\label{prop:dual}
	The (pre-)dual problem of $\OT_\varepsilon$ is given by
	\begin{align}
	\OT_\varepsilon(\mu,\nu) 
	&= \sup_{(\varphi,\psi)\in C(\X)^2}
	\Big\{ \int_{\X}\varphi \dx \mu + \int_{\X} \psi \dx \nu \\
	& \qquad - \varepsilon \int_{\X^2} \exp\Bigl(\frac{\varphi(x) + \psi(y) - c(x,y)}{\varepsilon}\Bigr) -1
	\dx (\mu \otimes \nu) \Big\}.\label{pre-dual}
	\end{align}
	If optimal dual solutions $\hat \varphi_\varepsilon$ and $\hat \psi_\varepsilon$ exist, 
	they are related to the optimal transport plan $\hat \pi_\varepsilon$ by
	\begin{equation}\label{eq:PDrelation}
	\hat\pi_\varepsilon = \exp\Bigl(\frac{\hat \varphi_\varepsilon(x) + \hat \psi_\varepsilon(y) - c(x,y)}{\varepsilon}\Bigr) \mu \otimes \nu.
	\end{equation}
\end{proposition}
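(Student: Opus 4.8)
The plan is to obtain \eqref{pre-dual} as an instance of the Fenchel--Rockafellar relation \eqref{primal-dual}. Put $V \coloneqq C(\X)^2$ and $W \coloneqq C(\X^2)$, so that $V^* = \mathcal M(\X)^2$ and $W^* = \mathcal M(\X^2)$ by Riesz, and let $A \colon V \to W$ be the bounded linear map $A(\varphi,\psi)(x,y) \coloneqq \varphi(x) + \psi(y)$, whose adjoint $A^*\pi \in \mathcal M(\X)^2$ is the pair of marginals of $\pi$. Define the linear continuous functional $F(\varphi,\psi) \coloneqq \int_\X \varphi \dx \mu + \int_\X \psi \dx \nu$ and the exponential--integral functional $G(\zeta) \coloneqq \varepsilon \int_{\X^2} \bigl( \exp\bigl(\tfrac{\zeta - c}{\varepsilon}\bigr) - 1 \bigr) \dx(\mu \otimes \nu)$. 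With these choices one checks directly that $-F\bigl(-(\varphi,\psi)\bigr) - G\bigl(A(\varphi,\psi)\bigr)$ coincides with the expression in braces in \eqref{pre-dual}, so the left-hand side of \eqref{primal-dual} is exactly the asserted dual functional.

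It then remains to identify the right-hand side of \eqref{primal-dual} with $\OT_\varepsilon(\mu,\nu)$. Since $F$ is linear and continuous, $F \in \Gamma_0(V)$ and $F^* = \iota_{\{(\mu,\nu)\}}$, hence $F^*(A^*\pi)$ is zero if $\pi$ has marginals $\mu$ and $\nu$ and $+\infty$ otherwise. For $G$ I use the conjugate pairing \eqref{dual_G} with $\nu$ replaced by $\mu \otimes \nu$: the substitution $\zeta = \varepsilon \eta + c$ (a bijection of $C(\X^2)$) gives $G^*(\pi) = \int_{\X^2} c \dx \pi + \varepsilon \, \mathrm{KL}(\pi, \mu \otimes \nu)$, where the right-hand side is understood as $+\infty$ as soon as $\pi$ fails to be a non-negative measure or $\pi \not\ll \mu \otimes \nu$. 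Combining the two conjugates, $\inf_{\pi \in W^*} \{ F^*(A^*\pi) + G^*(\pi) \}$ runs effectively over $\Pi(\mu,\nu)$ and equals $\inf_{\pi \in \Pi(\mu,\nu)} \{ \int_{\X^2} c \dx \pi + \varepsilon \mathrm{KL}(\pi, \mu \otimes \nu) \} = \OT_\varepsilon(\mu,\nu)$ by \eqref{sinkhorn}.

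To legitimately invoke \eqref{primal-dual} I still need to verify its hypotheses. $F \in \Gamma_0(V)$ is immediate. For $G \in \Gamma_0(W)$, note that $\zeta \mapsto \exp\bigl(\tfrac{\zeta - c}{\varepsilon}\bigr)$ is continuous from $C(\X^2)$ to $C(\X^2)$, since exponentials of uniformly convergent, uniformly bounded functions converge uniformly; therefore $G$ is finite, convex (an affine map composed with $\exp$, then integrated) and continuous on all of $W$. The qualification condition ``$\exists\, x \in \mathrm{dom}\,F$ with $G$ continuous at $Ax$'' then holds trivially, e.g.\ for $x = (0,0)$. I expect the main obstacle to be precisely this conjugate-and-continuity bookkeeping for $G$ — in particular checking that $G^*$ correctly reproduces $+\infty$ for signed or singular $\pi$, so that the dualized problem really is $\OT_\varepsilon$; everything else is formal.

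Finally, for the primal--dual relation \eqref{eq:PDrelation}, suppose the maximizers $\hat \varphi_\varepsilon, \hat \psi_\varepsilon$ exist, so that $\hat x \coloneqq (\hat \varphi_\varepsilon, \hat \psi_\varepsilon)$ is an optimal solution of the primal problem in \eqref{primal-dual}, while the unique minimizer $\hat \pi_\varepsilon$ from Lemma~\ref{lem:1} i) is an optimal solution of the dual problem. Then \eqref{dual_2} yields $A \hat x \in \partial G^*(\hat \pi_\varepsilon)$, and \eqref{dual_1} rewrites this as $\hat \pi_\varepsilon \in \partial G(A \hat x)$. Since $G(\zeta) = \varepsilon H\bigl(\tfrac{\zeta - c}{\varepsilon}\bigr)$ with $H$ as in \eqref{dual_G} for the reference measure $\mu \otimes \nu$, the chain rule and $\nabla H(\varphi) = e^\varphi\,(\mu\otimes\nu)$, see \eqref{eq:GradDiv}, show that $G$ is Gateaux differentiable with $\nabla G(\zeta) = \exp\bigl(\tfrac{\zeta - c}{\varepsilon}\bigr)(\mu \otimes \nu)$; hence $\partial G(A \hat x) = \{ \nabla G(A \hat x) \}$ and we read off $\hat \pi_\varepsilon = \exp\bigl( \tfrac{\hat \varphi_\varepsilon(x) + \hat \psi_\varepsilon(y) - c(x,y)}{\varepsilon} \bigr) \mu \otimes \nu$, which is \eqref{eq:PDrelation}.
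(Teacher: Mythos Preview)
Your proof is correct and follows essentially the same route as the paper: the same choice of $F$, $G$, $A$ in the Fenchel--Rockafellar scheme \eqref{primal-dual}, the same identification $F^*(A^*\pi)=\iota_{\Pi(\mu,\nu)}(\pi)$ and $G^*(\pi)=\int c\,\dx\pi+\varepsilon\,\mathrm{KL}(\pi,\mu\otimes\nu)$. The only cosmetic difference is in deriving \eqref{eq:PDrelation}: the paper stays on the $G^*$ side and invokes \eqref{eq:GradKL} to read off $A\hat x = c + \varepsilon\log\frac{\dx\hat\pi_\varepsilon}{\dx(\mu\otimes\nu)}$, whereas you flip via \eqref{dual_1} to $\hat\pi_\varepsilon\in\partial G(A\hat x)$ and differentiate $G$ directly; both are equivalent one-line computations.
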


\begin{proof} 
	Let us consider
	$F \in \Gamma_0(C(\X)^2)$, $G \in \Gamma_0(C(\X^2))$ with Fenchel conjugates  $F^* \in \Gamma _0 (\mathcal M(\X)^2)$, $G^* \in \Gamma_0 (\mathcal M(\X^2))$
	together with a linear bounded operator $A\colon C(\X)^2 \to C(\X^2)$ with adjoint operator $A^* \colon\mathcal M(\X^2) \to \mathcal M(\X)^2$
	defined by
	\begin{align}
	&F(\varphi,\psi) = \int_{\X}\varphi \dx \mu + \int_{\X} \psi \dx \nu,\\
	&G(\varphi) = \varepsilon \int_{\X^2} \exp\Bigl(\frac{\varphi - c}{\varepsilon}\Bigr) -1 \dx (\mu \otimes \nu),\\
	&A(\varphi,\psi)(x,y) =  \varphi(x) + \psi(y).
	\end{align}
	Then, \eqref{pre-dual} has the form of the left-hand side in \eqref{primal-dual}.
	Incorporating \eqref{dual_G}, we get
	$$
	G^*(\pi) = \int_\X c \dx  \pi + \varepsilon \mathrm{KL}( \pi,\mu\otimes\nu).
	$$
	Using the indicator function $\iota_C$ defined by $\iota_C (x)\coloneqq 0$ for $x \in C$ and $\iota_C (x)\coloneqq +\infty$ otherwise, we have 
	\begin{align*}
	F^*(A^*\pi) 
	&= \sup_{(\varphi,\psi) \in C(\X)^2} \langle A^* \pi,(\phi,\psi) \rangle -  \int_{\X}\varphi \dx \mu - \int_{\X} \psi \dx \nu\\
	&= \sup_{(\varphi,\psi) \in C(\X)^2} \langle  \pi, \phi(x) + \psi(y) \rangle -  \int_{\X}\varphi \dx \mu - \int_{\X} \psi \dx \nu \\
	&= \iota_{\Pi(\mu,\nu)} (\pi).
	\end{align*}
	Now, the duality relation follows from \eqref{primal-dual}. 
	
	If the optimal solution \smash{$(\hat \varphi_\varepsilon,\hat \psi_\varepsilon)$} exists, we can apply \eqref{dual_2} and \eqref{eq:GradKL}
	to obtain
	$$
	\hat\varphi_\varepsilon(x) + \hat \psi_\varepsilon(y) = c + \log\left( \frac{\dx \hat \pi_\varepsilon}{\dx (\mu\otimes \nu) } \right),
	$$
	which  yields \eqref{eq:PDrelation}.
\end{proof}

\begin{remark}\label{rem:restrict}
	Using the Tietze extension theorem, we could also replace the space $C(\X)^2$ by $C(\supp(\mu)) \times C(\supp(\nu))$.
\end{remark}

Note that the last term in \eqref{pre-dual} is a smoothed version of the associated constraint $\varphi(x)+\psi(y)\leq c(x,y)$ appearing in~\eqref{Wdual}.
Clearly, the values of $\varphi$ and $\psi$  are only relevant on $\supp(\mu)$ and $\supp(\nu)$, respectively.
Further, for any $\varphi, \psi \in C(\X)$ and $C \in \R$, the potentials $\varphi + C, \psi -C$ realize the same value in \eqref{pre-dual}.

For fixed $\varphi$ or $\psi$, the corresponding maximizing potentials in \eqref{pre-dual} are given by 
$$
\hat \psi_{\varphi,\varepsilon} = T_{\mu,\varepsilon}(\varphi) \text{ on $\supp(\nu)$} \quad \mathrm{and} \quad \hat \varphi_{\psi,\varepsilon} = T_{\nu,\varepsilon}(\psi) \text{ on $\supp(\mu)$},
$$
respectively.
Here, $T_{\mu,\varepsilon} \colon C(\X) \to C(\X)$ is defined as
\begin{align}
T_{\mu,\varepsilon}(\varphi)(x) \coloneqq - \varepsilon \log \left( \int_\X \exp\Bigl(\frac{\varphi(y) -c(x,y)}{\varepsilon}\Bigr)\dx \mu(y) \right).\label{eq:condphi}
\end{align}
Therefore, any pair of optimal potentials $\hat \varphi_\varepsilon$ and $\hat \psi_\varepsilon$ must satisfy
\begin{align}\label{eq:CondOptim}
\hat \psi_\varepsilon  = T_{\mu,\varepsilon}(\hat \varphi_\varepsilon) \text{ on $\supp(\nu)$}, \qquad \hat \varphi_\varepsilon = T_{\nu,\varepsilon}(\hat \psi_\varepsilon) \text{ on $\supp(\mu)$}.
\end{align}
For every $\varphi \in C(\X)$ and $C \in \R$, it holds $T_{\mu,\varepsilon}(\varphi+C) = T_{\mu,\varepsilon}(\varphi) +C$.
Hence, $T_{\mu,\varepsilon}$ can be interpreted as an operator on the quotient space $C(\X)/\R$, where $f_1,f_2 \in C(\X)$ are equivalent if they differ by a real constant.
This space can equipped with the \emph{oscillation norm}
\[\Vert f \Vert_{\circ,\infty} \coloneqq \tfrac12(\max f - \min f)\]
and for $ f \in C(\X)/\R$ there is a representative $\bar f \in C(\X)$ with $\Vert f \Vert_{\circ,\infty} = \Vert \bar f \Vert_{\infty}$.
Finally, it is possible to restrict the domain of $T_{\mu,\varepsilon}$ to $C(\supp(\mu))$ and $C(\supp(\mu))/\R$, respectively.
This interpretation is useful for showing convergence of the Sinkhorn algorithm.
In the next lemma, we collect a few properties of $T_{\mu,\varepsilon}$, see also \cite{GCBCP2018, Via19}.

\begin{lemma}\label{prop:PropT}
	\begin{itemize}
		\item[i)] 
		For any measure $\mu \in P(\X)$, $\varepsilon > 0$ and $\varphi \in C(\X)$, the function $T_{\mu,\varepsilon}(\varphi) \in C(\X)$ has the same Lipschitz constant as $c$ and satisfies
		\begin{equation}\label{eq:rangePsi}
		T_{\mu,\varepsilon}(\varphi)(x) \in \Bigl[\,\,\min_{y \in \supp(\mu)}  c(x,y) - \varphi(y), \max_{y \in \supp(\mu)} c(x,y) - \varphi(y)\Bigr].
		\end{equation}		
		\item[ii)] 
		For fixed $\mu \in \mathcal P(\X)$, the operator $T_{\mu,\varepsilon}\colon C(\supp(\mu)) \to C(\X)$ is $1$-Lipschitz.
		Additionally, the operator $T_{\mu,\varepsilon}\colon C(\supp(\mu))/\R \to C(\X)/\R$ is $\kappa$-Lipschitz with $\kappa < 1$.
	\end{itemize}
\end{lemma}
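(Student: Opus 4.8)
The statement has two parts with three separate claims. For part i), the plan is to work directly from the explicit formula \eqref{eq:condphi}. The range estimate \eqref{eq:rangePsi} follows immediately: since $\mu$ is a probability measure on $\supp(\mu)$, the integrand $\exp((\varphi(y)-c(x,y))/\varepsilon)$ takes values between $\exp((\min_{y}(\varphi(y)-c(x,y)))/\varepsilon)$ and $\exp((\max_{y}(\varphi(y)-c(x,y)))/\varepsilon)$, so the integral lies between those bounds; applying $-\varepsilon\log(\cdot)$ (which is decreasing) flips the inequalities and produces exactly the claimed interval. For the Lipschitz constant in $x$, I would fix $x_1,x_2 \in \X$ and write $T_{\mu,\varepsilon}(\varphi)(x_1) - T_{\mu,\varepsilon}(\varphi)(x_2)$ as $-\varepsilon\log$ of a ratio of integrals; bounding $c(x_1,y) \le c(x_2,y) + \Lip(c)\,\dist_\X(x_1,x_2)$ pointwise inside the exponential pulls the factor $\exp(-\Lip(c)\dist_\X(x_1,x_2)/\varepsilon)$ out of the integral, and taking logs gives the one-sided Lipschitz bound; symmetry gives the other side. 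This shows $T_{\mu,\varepsilon}(\varphi)$ has Lipschitz constant at most $\Lip(c)$, and it is exactly $\Lip(c)$ in general since the $c$-transform is recovered in the limit (or one simply says ``the same Lipschitz constant as $c$'' in the sense of the bound, as for $\varphi^c$).

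For part ii), the $1$-Lipschitz property of $T_{\mu,\varepsilon}\colon C(\supp(\mu)) \to C(\X)$ is a standard soft-min/log-sum-exp contraction argument: if $\varphi_1,\varphi_2 \in C(\supp(\mu))$ with $\|\varphi_1 - \varphi_2\|_\infty = \delta$, then $\varphi_1 \le \varphi_2 + \delta$ pointwise, and monotonicity of $T_{\mu,\varepsilon}$ (clear from the formula, as the integrand is increasing in $\varphi$) together with the constant-shift identity $T_{\mu,\varepsilon}(\varphi_2 + \delta) = T_{\mu,\varepsilon}(\varphi_2) + \delta$ gives $T_{\mu,\varepsilon}(\varphi_1) \le T_{\mu,\varepsilon}(\varphi_2) + \delta$; symmetry yields $\|T_{\mu,\varepsilon}(\varphi_1) - T_{\mu,\varepsilon}(\varphi_2)\|_\infty \le \delta$.

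The genuinely substantive claim is the strict contraction $\kappa < 1$ on the quotient spaces with the oscillation norm; this is the Birkhoff--Hilbert projective-metric phenomenon underlying convergence of Sinkhorn, and I expect it to be the main obstacle. The plan is to exploit the boundedness of the ``effective kernel'': writing $g_i(x,y) \coloneqq \exp((\varphi_i(y) - c(x,y))/\varepsilon)$, one has $T_{\mu,\varepsilon}(\varphi_i)(x) = -\varepsilon\log\int g_i \dx\mu$, and since $\X$ is compact and $c$ is continuous (hence bounded), the ratio $g_1/g_2 = \exp((\varphi_1 - \varphi_2)/\varepsilon)$ has oscillation controlled by $\|\varphi_1 - \varphi_2\|_{\circ,\infty}$; the key point is that the map $w \mapsto -\varepsilon\log\int e^{w/\varepsilon}\dx\mu$, as a map on oscillations, is a strict contraction with a rate depending only on $\varepsilon$, $\diam(\X)$, and $\Lip(c)$ via the oscillation of $c(x,\cdot)$ on $\supp(\mu)$. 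Concretely I would fix $x_1, x_2$, subtract the two log-integrals for $\varphi_1$ and for $\varphi_2$, and estimate the difference of the resulting expressions using that the weights $\mu$-average against are bounded away from the extremes — the standard computation shows the oscillation is multiplied by $\tanh(\operatorname{diam}_c/(4\varepsilon))$ or a similar explicit factor strictly below $1$, where $\operatorname{diam}_c$ bounds the relevant oscillation of $c$. Since I am permitted to cite earlier-referenced sources, I would at this point invoke \cite{GCBCP2018, Via19} for the precise contraction constant rather than reproduce the Birkhoff-coefficient computation, noting only that compactness of $\X$ and continuity of $c$ guarantee the required uniform bound on $\operatorname{diam}_c$ so that $\kappa < 1$.
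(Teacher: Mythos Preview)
Your plan for part~i) matches the paper's proof essentially line for line: the log-of-ratio manipulation with the pointwise Lipschitz bound on $c$ is exactly what the paper does, and the range estimate \eqref{eq:rangePsi} is dispatched the same way.

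For part~ii) your approach diverges from the paper's in an interesting way, but first a small correction: $T_{\mu,\varepsilon}$ is \emph{anti}-monotone (the integrand is increasing in $\varphi$, but there is a $-\varepsilon\log$ in front), and the shift identity is $T_{\mu,\varepsilon}(\varphi+C)=T_{\mu,\varepsilon}(\varphi)-C$. Your two sign errors happen to cancel, so the $1$-Lipschitz conclusion survives, but the argument as written is not quite right.

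On the substantive comparison: the paper proves both halves of ii) with a single device you do not use, namely the path-integral representation
\[
T_{\mu,\varepsilon}(\varphi_1)(x)-T_{\mu,\varepsilon}(\varphi_2)(x)=\int_0^1\int_\X\bigl(\varphi_1-\varphi_2\bigr)\,\rho_{t,x}\,\dx\mu\,\dx t,
\]
where $\rho_{t,x}\ge 0$ is an explicit Gibbs density with $\int\rho_{t,x}\,\dx\mu=1$. The $1$-Lipschitz bound is then immediate, and the strict contraction on $C(\supp(\mu))/\R$ follows from the elementary pointwise estimate $|\rho_{t,x}-\rho_{t,y}|\le (1-e^{-2L\diam(\X)/\varepsilon})(\rho_{t,x}+\rho_{t,y})$, yielding the explicit constant $\kappa=1-e^{-2L\diam(\X)/\varepsilon}$. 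This is entirely self-contained and avoids any appeal to Birkhoff--Hilbert machinery. Your route---monotonicity/shift for the $1$-Lipschitz part, then citing \cite{GCBCP2018,Via19} for the Hilbert-metric contraction---is correct and arguably more conceptual, and the $\tanh$ constant you allude to is in fact sharper than the paper's; but it outsources precisely the step the paper works out by hand, and your informal description of \emph{why} the soft-min contracts oscillations (``the map $w\mapsto -\varepsilon\log\int e^{w/\varepsilon}\dx\mu$ is a strict contraction on oscillations'') is not accurate as stated---the contraction comes from the $x$-dependence of the kernel $e^{-c(x,\cdot)/\varepsilon}$, not from the log-sum-exp alone.
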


\begin{proof} i)
	For $x_1,x_2 \in \X$ (possibly changing the naming of the variables) we obtain
	\begin{align}
	&\bigl\vert T_{\mu,\varepsilon}(\varphi)(x_1) - T_{\mu,\varepsilon}(\varphi)(x_2) \bigr\vert\\
	=& \varepsilon  
	\Big| \log  \int_\X \exp\Bigl(\frac{\varphi(y)-c(x_2,y)}{\varepsilon}\Bigr)\dx \mu(y) 
	-  \log  \int_\X \exp\Bigl(\frac{\varphi(y)-c(x_1,y)}{\varepsilon}\Bigr)\dx \mu(y) \Big|
	\\
	=& \varepsilon  \log \left( \int_\X \exp\Bigl(\frac{\varphi(y)-c(x_2,y)}
	{\varepsilon}\Bigr)\dx \mu(y)\Bigr/ \int_\X \exp\Bigl(\frac{\varphi(y)-c(x_1,y)}{\varepsilon}\Bigr)\dx \mu(y) \right).
	\end{align}
	Incorporating the $L$-Lipschitz continuity of $c$, we get
	\begin{align*}
	\exp\Bigl(\frac{c(x_1,y) -c(x_2,y) }{\varepsilon}\Bigr) 
	\le \exp\Bigl(\frac{|c(x_1,y) - c(x_2,y)| }{\varepsilon}\Bigr)
	\le \exp \Bigl(\frac{L}{\varepsilon} |x_1 - x_2| \Bigr),
	\end{align*}
	so that
	\begin{align*}
	\int_\X \exp\Bigl(\frac{\varphi(y) - c(x_2,y)}{\varepsilon} \Bigr) \dx \mu(y)
	&\leq 
	\exp \Bigl(\frac{L}{\varepsilon} |x_1 - x_2|\Bigr) \int_\X \exp\Bigl(\frac{\varphi(y) - c(x_1,y)}{\varepsilon} \Bigr) \dx \mu(y).
	\end{align*}
	Thus, $T_{\mu,\varepsilon}(\varphi)$ is Lipschitz continuous
	\begin{align}
	\bigl\vert T_{\mu,\varepsilon}(\varphi)(x_1) - T_{\mu,\varepsilon}(\varphi)(x_2) \bigr\vert	
	\le 
	\varepsilon  \log \Bigl( \exp\Bigl( \frac{L}{\varepsilon} \vert x_1 -x_2 \vert\Bigr) \Bigr)
	= L \vert x_1 -x_2 \vert.
	\end{align}
	Finally, \eqref{eq:rangePsi} follows directly from \eqref{eq:condphi} since $\mu$ is a probability measure.
	\\
	ii)
	For any $x \in \X$ and $\varphi_1,\varphi_2 \in C(\supp(\mu))$ it holds 
	\begin{align}\label{eq:EstStand}
	T_{\mu,\varepsilon}(\varphi_1)(x) - T_{\mu,\varepsilon}(\varphi_2)(x) =& \int_0^1 \tfrac{\dx}{\dx t} T_{\mu,\varepsilon}\bigl(\varphi_1 + t(\varphi_2 - \varphi_1)\bigr)(x) \dx t\\
	=&\int_0^1 \int_\X \bigl(\varphi_1(z) - \varphi_2(z) \bigr) \rho_{t,x}(z) \dx \mu(z) \dx t\notag
	\end{align}
	with
	\[\rho_{t,x} \coloneqq \frac{\exp\bigl(\bigl(t\varphi_2 + (1-t)\varphi_1 - c(x,\cdot)/\varepsilon\bigr)\bigr)}{\int_\X \exp\bigl(\bigl(t\varphi_2(z) + (1-t)\varphi_1(z) - c(x,z)\bigr)/\varepsilon\bigr) \dx \mu(z)}.\]
	This directly implies
	\[\Vert T_{\mu,\varepsilon}(\varphi_1) - T_{\mu,\varepsilon}(\varphi_2) \Vert_\infty \leq \sup_{ x \in \supp(\mu)} \int_0^1 \int_\X \bigl\vert \varphi_1(z) - \varphi_2(z) \bigr\vert \rho_{t,x}(z) \dx \mu(z) \dx t \leq \Vert \varphi_1 - \varphi_2 \Vert_\infty.\]
	
	In order to show the second claim, we choose representatives $\varphi_1$ and $\varphi_2$ such that $\Vert \varphi_1 - \varphi_2 \Vert_{\infty} = \Vert \varphi_1 - \varphi_2 \Vert_{\circ, \infty}$.
	Given $x,y\in \X$, we conclude using \eqref{eq:EstStand} that
	\begin{align}
	&\frac{1}{2}\bigr(T_{\mu,\varepsilon}(\varphi_1)(x) - T_{\mu,\varepsilon}(\varphi_2)(x) - T_{\mu,\varepsilon}(\varphi_1)(y) + T_{\mu,\varepsilon}(\varphi_2)(y)\bigl)\notag\\
	=& \frac{1}{2} \int_0^1 \int_\X \bigl(\varphi_1(z) - \varphi_2(z) \bigr) \bigr(\rho_{t,x}(z) - \rho_{t,y}(z) \bigl) \dx \mu(z) \dx t\notag\\
	\leq& \Vert \varphi_1 - \varphi_2 \Vert_{\circ,\infty} \frac{1}{2} \int_0^1 \Vert \rho_{t,x} - \rho_{t,y} \Vert_{L^1(\mu)} \dx t.\label{eq:EstExp}
	\end{align}
	For all $z \in \X$ with $p_{t,x}(z) \geq p_{t,y}(z)$, we can estimate
	\[p_{t,x}(z) - p_{t,y}(z) \leq p_{t,x}(z)(1 - \exp(-2L\diam(\X)/\varepsilon))\]
	and similarly for $z \in \X$ with $p_{t,y}(z) \geq p_{t,x}(z)$.
	Hence, we obtain
	\begin{align*}
	\Vert \rho_{t,x} - \rho_{t,y} \Vert_{L^1(\mu)}
	\leq& \int_{\X} (1_{\{p_{t,x}\geq p_{t,y}\}}p_{t,x} + 1_{\{p_{t,y}>p_{t,x}\}}p_{t,y})\bigl(1 - \exp(-2L\diam(\X)/\varepsilon)\bigr)\dx\mu\\
	\leq&2\bigl(1 - \exp(-2L\diam(\X)/\varepsilon)\bigr).
	\end{align*}
	Finally, inserting this into \eqref{eq:EstExp} implies
	\[\bigl\Vert T_{\mu,\varepsilon}(\varphi_1) - T_{\mu,\varepsilon}(\varphi_2) \bigr\Vert_{\circ,\infty} \leq \bigl(1-\exp(-2L\diam(\X)/\varepsilon)\bigr) \Vert \varphi_1 - \varphi_2 \Vert_{\circ, \infty}.\]
\end{proof}

Now, we are able to prove existence of an optimal solution $(\hat \varphi_\varepsilon,\hat \psi_\varepsilon)$.

\begin{proposition}\label{prop:existence}
	The optimal potentials $\hat \varphi_\varepsilon, \hat \psi_\varepsilon \in C(\X)$ exist and are unique on $\supp(\mu)$ and $\supp(\nu)$, respectively (up to the additive constant).
\end{proposition}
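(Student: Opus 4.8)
\emph{Plan.} The plan is to establish existence by a compactness argument applied to a maximizing sequence of the dual functional, and to obtain uniqueness from the primal--dual relation~\eqref{eq:PDrelation} together with the uniqueness of $\hat\pi_\varepsilon$ from Lemma~\ref{lem:1}~i).

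\emph{Existence.} Write $D(\varphi,\psi)$ for the functional maximized in~\eqref{pre-dual}; by Proposition~\ref{prop:dual} and Lemma~\ref{lem:1}~i) one has $\sup D = \OT_\varepsilon(\mu,\nu) < \infty$. Starting from any maximizing sequence, I would first replace $\varphi_n$ by the componentwise maximizer $T_{\nu,\varepsilon}(\psi_n)$ for fixed $\psi_n$, then---exploiting the shift invariance $D(\varphi-C,\psi+C)=D(\varphi,\psi)$---renormalize so that $\min_{\supp(\mu)}\varphi_n = 0$, and finally replace $\psi_n$ by $T_{\mu,\varepsilon}(\varphi_n)$; each of these steps keeps the sequence maximizing. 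By Lemma~\ref{prop:PropT}~i) both $\varphi_n$ and $\psi_n$ are now Lipschitz with the Lipschitz constant of $c$; since each $\varphi_n$ then has this common modulus of continuity and vanishes somewhere on the compact set $\X$, the family $(\varphi_n)$ is uniformly bounded, and the range inclusion~\eqref{eq:rangePsi} bounds $\psi_n = T_{\mu,\varepsilon}(\varphi_n)$ uniformly as well. Hence $(\varphi_n,\psi_n)$ is uniformly bounded and equicontinuous, so by Arzelà--Ascoli a subsequence converges uniformly to some $(\hat\varphi_\varepsilon,\hat\psi_\varepsilon)\in C(\X)^2$. Because the integrand $\exp((\varphi(x)+\psi(y)-c(x,y))/\varepsilon)$ depends continuously on $(\varphi,\psi)$ and stays uniformly bounded along the sequence, passing to the limit in~\eqref{pre-dual} gives $D(\hat\varphi_\varepsilon,\hat\psi_\varepsilon)=\sup D$, so $(\hat\varphi_\varepsilon,\hat\psi_\varepsilon)$ is an optimal dual pair.

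\emph{Uniqueness.} With existence of optimal potentials in hand, Proposition~\ref{prop:dual} shows that \emph{every} optimal pair $(\varphi,\psi)$ satisfies $\hat\pi_\varepsilon = \exp((\varphi(x)+\psi(y)-c(x,y))/\varepsilon)\,\mu\otimes\nu$ with one and the same $\hat\pi_\varepsilon$, the latter being the unique minimizer of~\eqref{sinkhorn}. Therefore, for two optimal pairs $(\hat\varphi_\varepsilon,\hat\psi_\varepsilon)$ and $(\tilde\varphi_\varepsilon,\tilde\psi_\varepsilon)$ the densities agree, i.e.\ $\hat\varphi_\varepsilon(x)+\hat\psi_\varepsilon(y) = \tilde\varphi_\varepsilon(x)+\tilde\psi_\varepsilon(y)$ for $(\mu\otimes\nu)$-a.e.\ $(x,y)$, and hence, by continuity, for all $(x,y)\in\supp(\mu)\times\supp(\nu)$. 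Fixing $y\in\supp(\nu)$ yields $\hat\varphi_\varepsilon - \tilde\varphi_\varepsilon \equiv C$ on $\supp(\mu)$ for some constant $C$, and then $\hat\psi_\varepsilon - \tilde\psi_\varepsilon \equiv -C$ on $\supp(\nu)$, which is precisely uniqueness up to the additive constant. Alternatively, strict concavity of $D$ in $\psi$ along directions not vanishing $\nu$-a.e.\ forces any optimal pair to obey~\eqref{eq:CondOptim}, so that $\hat\varphi_\varepsilon$ restricted to $\supp(\mu)$ is the unique fixed point of the contraction $T_{\nu,\varepsilon}\circ T_{\mu,\varepsilon}$ on $C(\supp(\mu))/\R$ provided by Lemma~\ref{prop:PropT}~ii).

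\emph{Main obstacle and an alternative.} The only genuinely delicate point is producing a maximizing sequence that is simultaneously equicontinuous and uniformly bounded: one must apply the operators $T_{\nu,\varepsilon},T_{\mu,\varepsilon}$ enough times to gain the uniform Lipschitz bound \emph{and} fix the additive constant so that the sequence does not escape to infinity, all while not destroying the maximizing property. An alternative to the compactness argument would be to invoke the Banach fixed-point theorem on the complete space $(C(\supp(\mu))/\R,\Vert\cdot\Vert_{\circ,\infty})$, on which $T_{\nu,\varepsilon}\circ T_{\mu,\varepsilon}$ is a contraction by Lemma~\ref{prop:PropT}~ii) (restriction being $1$-Lipschitz for the oscillation norm), and then to check that the resulting quotient-level fixed point lifts to a genuine pair satisfying~\eqref{eq:CondOptim}; optimality of that pair for~\eqref{pre-dual} is then confirmed by forming $\hat\pi$ via~\eqref{eq:PDrelation}, verifying $\hat\pi\in\Pi(\mu,\nu)$ from~\eqref{eq:CondOptim}, and equating primal and dual values, the constant terms cancelling because $\hat\pi$ and $\mu\otimes\nu$ are probability measures.
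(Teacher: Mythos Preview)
Your proposal is correct and follows essentially the same route as the paper: replace a maximizing sequence by applying the operators $T_{\nu,\varepsilon},T_{\mu,\varepsilon}$ to gain the uniform Lipschitz bound, fix the additive constant to obtain uniform boundedness, invoke Arzelà--Ascoli, and deduce uniqueness from the primal--dual relation~\eqref{eq:PDrelation} together with the uniqueness of $\hat\pi_\varepsilon$. The paper's proof differs only in cosmetic details (it applies $T_{\mu,\varepsilon}$ first and normalizes $\tilde\psi_n(x_0)=0$ at a single point rather than $\min_{\supp(\mu)}\varphi_n=0$), and it does not spell out the uniqueness step or the Banach fixed-point alternative as explicitly as you do.
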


\begin{proof}
	Let $\varphi_n, \psi_n \in C(\X) $ be maximizing sequences of \eqref{pre-dual}.
	Using the operator $T_{\mu,\varepsilon}$, these can be replaced by 
	$$\tilde \psi_n = T_{\mu,\varepsilon}(\varphi_n) 
	\quad \mathrm{and} \quad 
	\tilde \varphi_n = T_{\nu,\varepsilon} \circ T_{\mu,\varepsilon}(\varphi_n),
	$$
	which are Lipschitz continuous with the same constant as $c$ by Lemma \ref{prop:PropT} i) and therefore uniformly equi-continuous.
	Next, we can choose some $x_0 \in \supp(\mu)$ and w.l.o.g.~assume $\tilde \psi_n(x_0) = 0$.
	Due to the uniform Lipschitz continuity, the potentials $\tilde \psi_n$ are uniformly bounded and by \eqref{eq:rangePsi} the same holds true for $\tilde \varphi_n$.
	Now, the theorem of Arzelà--Ascoli implies that both sequences contain convergent subsequences.
	Since the functional in \eqref{pre-dual} is continuous, we can readily infer the existence of optimal potentials $\hat \varphi_\varepsilon, \hat \psi_\varepsilon \in C(\X)$.
	Due to the uniqueness of $\hat \pi_\varepsilon$, \eqref{eq:PDrelation} implies that $\hat \varphi_\varepsilon|_{\supp(\mu)}$ and $\hat \psi_\varepsilon|_{\supp(\nu)}$ are uniquely determined up to an additive constant. 
\end{proof}

Combining the optimality condition \eqref{eq:condphi} and \eqref{pre-dual}, we directly obtain for any pair of optimal solutions
\begin{equation}\label{eq:optimal energy}
\OT_\varepsilon(\mu,\nu) = \int_{\X}\hat \varphi_\varepsilon  \dx \mu + \int_{\X} \hat \psi_\varepsilon  \dx \nu.
\end{equation}
Adding, e.g., the additional constraint 
\begin{equation} \label{add_constraint}
\int_{\X } \varphi \dx \mu = \tfrac{1}{2} \OT_\infty(\mu,\nu),
\end{equation}
the restricted optimal potentials $\hat \varphi_\varepsilon|_{\supp(\mu)}$ and $\hat \psi_\varepsilon|_{\supp(\nu)}$ are unique.
The next proposition investigates the limits of the potentials as $\varepsilon \to 0$ and $\varepsilon \to \infty$.

\begin{proposition}\label{lem:PropPot}
	\begin{itemize}
		\item[i)] Enforcing the constraint \eqref{add_constraint},
		the restricted potentials 
		$\hat \varphi_\varepsilon|_{\supp(\mu)}$ and $\hat \psi_\varepsilon|_{\supp(\nu)}$ converge uniformly for $\varepsilon \to \infty$ to
		\begin{align}
		\hat \varphi_\infty(x) &= \int_{\X} c(x,y) \dx \nu(y) -  \tfrac{1}{2}\OT_\infty(\mu,\nu),\\
		\hat \psi_\infty(y) &= \int_{\X} c(x,y) \dx \mu(x) -  \tfrac{1}{2}\OT_\infty(\mu,\nu),
		\end{align}	
		respectively.
		\item[ii)] For $\varepsilon \to 0$ every accumulation point of $(\hat \varphi_\varepsilon|_{\supp(\mu)}$, $\hat \psi_\varepsilon|_{\supp(\nu)})$ 
		can be extended to an optimal dual pair for $\OT(\mu,\nu)$ satisfying \eqref{add_constraint}.
		In particular, $\lim_{\varepsilon \to 0} \OT_\varepsilon (\mu,\nu) = \OT(\mu,\nu)$.
	\end{itemize}
\end{proposition}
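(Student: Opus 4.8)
The plan is to exploit the Sinkhorn fixed-point relations \eqref{eq:CondOptim}, the uniform Lipschitz bound of Lemma~\ref{prop:PropT}~i), and the normalisation \eqref{add_constraint}. A preliminary step shared by both parts: since $\hat\varphi_\varepsilon = T_{\nu,\varepsilon}(\hat\psi_\varepsilon)$ and $\hat\psi_\varepsilon = T_{\mu,\varepsilon}(\hat\varphi_\varepsilon)$ extend to $L$-Lipschitz functions on all of $\X$ by Lemma~\ref{prop:PropT}~i), the oscillation of $\hat\varphi_\varepsilon$ is at most $L\,\diam(\X)$; together with $\int_\X \hat\varphi_\varepsilon \dx\mu = \tfrac12 \OT_\infty(\mu,\nu)$ this bounds $\|\hat\varphi_\varepsilon\|_\infty$ independently of $\varepsilon$, and then \eqref{eq:rangePsi} gives the same for $\|\hat\psi_\varepsilon\|_\infty$. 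Hence $\{\hat\varphi_\varepsilon\}$ and $\{\hat\psi_\varepsilon\}$ are uniformly bounded and uniformly equi-Lipschitz.

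For part i) I would expand $T_{\mu,\varepsilon}$ as $\varepsilon\to\infty$. Using $e^{t}=1+t+O(t^2)$ for $|t|\le 1$ and the uniform bound on $|\varphi(x)-c(x,y)|$ over the family, one obtains, uniformly in $y$ and over the family,
\[
T_{\mu,\varepsilon}(\varphi)(y) = \int_\X c(x,y)\dx\mu(x) - \int_\X \varphi \dx\mu + O(\varepsilon^{-1}).
\]
Inserting $\varphi = \hat\varphi_\varepsilon$ and using \eqref{add_constraint} gives $\hat\psi_\varepsilon \to \hat\psi_\infty$ uniformly on $\supp(\nu)$ directly, with no compactness argument. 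Integrating this against $\nu$ and invoking the symmetry of $c$ yields $\int_\X \hat\psi_\varepsilon\dx\nu \to \tfrac12\OT_\infty(\mu,\nu)$ (so that $\OT_\varepsilon \to \OT_\infty$ by \eqref{eq:optimal energy}, recovering Proposition~\ref{prop:conv}~i) with a rate); applying the analogous expansion of $T_{\nu,\varepsilon}$ to $\hat\psi_\varepsilon$ and feeding in this limit then gives $\hat\varphi_\varepsilon \to \hat\varphi_\infty$ uniformly on $\supp(\mu)$.

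For part ii) I would use Arzelà--Ascoli: from the uniform bounds and equi-Lipschitz property on the compact sets $\supp(\mu)$, $\supp(\nu)$, every sequence $\varepsilon_n \to 0$ has a subsequence along which $\hat\varphi_{\varepsilon_n}|_{\supp(\mu)} \to \varphi_*$ and $\hat\psi_{\varepsilon_n}|_{\supp(\nu)} \to \psi_*$ uniformly. The crucial step is feasibility, $\varphi_*(x)+\psi_*(y)\le c(x,y)$ on $\supp(\mu)\times\supp(\nu)$: the relation $\hat\psi_\varepsilon = T_{\mu,\varepsilon}(\hat\varphi_\varepsilon)$ on $\supp(\nu)$ rewrites, via \eqref{eq:condphi}, as $\int_\X \exp\bigl((\hat\varphi_\varepsilon(x)+\hat\psi_\varepsilon(y)-c(x,y))/\varepsilon\bigr)\dx\mu(x) = 1$ for every $y\in\supp(\nu)$; if the limiting pair exceeded $c$ at some $(x_0,y_0)$ with $x_0\in\supp(\mu)$, $y_0\in\supp(\nu)$, then by continuity and uniform convergence the integrand would be $\ge e^{\delta/\varepsilon_n}$ on a neighbourhood of $x_0$ of positive $\mu$-mass, forcing the integral to $+\infty$ -- a contradiction. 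Thus $(\varphi_*|_{\supp(\mu)},\psi_*|_{\supp(\nu)})$ is feasible for the dual OT problem restricted to the supports (cf.\ Remark~\ref{rem:ExtW}); it satisfies \eqref{add_constraint} by passing to the limit; and by \eqref{eq:optimal energy} its value equals $\lim_n \OT_{\varepsilon_n}(\mu,\nu)$, which is $\ge \OT(\mu,\nu)$ by Lemma~\ref{lem:1}~iii) with $\varepsilon_1=0$ (since $\OT_0 = \OT$), while \eqref{Wdual} forces it to be $\le \OT(\mu,\nu)$. Hence equality holds, $(\varphi_*,\psi_*)$ is optimal and extends to $\X$ by the Tietze extension theorem, and since $\varepsilon\mapsto\OT_\varepsilon(\mu,\nu)$ is monotone, $\OT_\varepsilon(\mu,\nu)\to\OT(\mu,\nu)$ follows for the full limit.

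The main obstacle I anticipate is the feasibility estimate in part ii): it is the only place where the supports enter essentially, and it hinges on converting the integral normalisation into a pointwise inequality through the exponential blow-up. A secondary source of care, throughout, is keeping track of which identities live on $\supp(\mu)$, $\supp(\nu)$ versus their extensions to $\X$ (Remarks~\ref{rem:ExtW} and~\ref{rem:restrict}). The $\varepsilon\to\infty$ part is comparatively routine once the uniform bounds are in place, the only delicacy being the uniformity of the $O(\varepsilon^{-1})$ remainder over the equi-Lipschitz family.
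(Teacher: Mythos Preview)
Your proposal is correct, and in both parts you take a genuinely different route from the paper.

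For part~i) the paper computes the pointwise limit of $\hat\psi_\varepsilon(y)=T_{\mu,\varepsilon}(\hat\varphi_\varepsilon)(y)$ via l'H\^opital's rule and dominated convergence, and then upgrades pointwise to uniform convergence using the common Lipschitz constant. Your second-order Taylor expansion of $\exp$ and $\log$ achieves the same identification directly and uniformly, and in addition yields the rate $O(\varepsilon^{-1})$, which the paper's argument does not. The only care required, which you flag correctly, is that the remainder be uniform over the equi-bounded family $\{\hat\varphi_\varepsilon\}$; this is guaranteed by the preliminary bound you set up.

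For part~ii) the paper proceeds by showing that $T_{\mu,\varepsilon}(\varphi)\to\min_{y\in\supp(\mu)}\{c(\cdot,y)-\varphi(y)\}$ for fixed $\varphi$, and then uses the $1$-Lipschitz property of $T_{\mu,\varepsilon}$ (Lemma~\ref{prop:PropT}~ii)) to transfer this to the moving family $\hat\varphi_{\varepsilon_j}$; this yields the stronger conclusion that the limit pair is actually a $c$-transform pair. Your blow-up argument from the normalisation $\int_\X\exp((\hat\varphi_\varepsilon+\hat\psi_\varepsilon-c)/\varepsilon)\dx\mu=1$ is more elementary: it avoids both the soft-min limit (which the paper imports from an external reference) and the Lipschitz swap, at the price of obtaining only feasibility rather than $c$-concavity. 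Since the statement only asks that the accumulation point extend to an optimal dual pair, feasibility is enough, and both proofs finish with the same sandwich using Lemma~\ref{lem:1}~iii) and~\eqref{Wdual}. Your handling of the supports and of the extension via Remark~\ref{rem:ExtW} is the same as the paper's.
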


\begin{proof}
	i) Since $\X$ is bounded, the Lipschitz continuity of the potentials together with \eqref{add_constraint} implies that all $\hat \varphi_\varepsilon$ are uniformly bounded on $\supp(\mu)$.
	Then, we conclude for $y \in \supp(\nu)$ using l'H\^{o}pital's rule, dominated convergence and \eqref{add_constraint} that
	\begin{align*}
	&\lim_{\varepsilon \to \infty} \hat \psi_{\varepsilon}(y) \\
	=& \lim_{\varepsilon \to \infty} -\frac{\int_\X \bigl(\hat \varphi_{\varepsilon}(x) - c(x,y)\bigr) 
		\exp\bigl(\bigl(\hat \varphi_{\varepsilon}(x)-c(x,y)\bigr)/ \varepsilon \bigr)\dx \mu(x)}{\int_\X \exp\bigl(\bigl(\hat \varphi_{\varepsilon}(x)-c(x,y)\bigr) / \varepsilon \bigr)\dx \mu(x)}
	\\
	=& 
	\lim_{\varepsilon \to \infty} \int_\X c(x,y) \exp\bigl(\bigl(\hat \varphi_{\varepsilon}(x)-c(x,y)\bigr)/ \varepsilon \bigr) 
	- \hat \varphi_{\varepsilon}(x) \exp\bigl(\bigl(\hat \varphi_{\varepsilon}(x)-c(x,y)\bigr)/ \varepsilon \bigr) \dx \mu(x)\\
	=& 
	\int_\X c(x,y) \dx \mu(x)
	- \lim_{\varepsilon \to \infty}\int_\X \hat \varphi_{\varepsilon}(x) \Bigl(\exp\bigl(\bigl(\hat \varphi_{\varepsilon}(x)-c(x,y)\bigr)/ \varepsilon\bigr) - 1\Bigr) + \hat \varphi_{\varepsilon}(x) \dx \mu(x)\\
	=& 
	\int_\X c(x,y) \dx \mu(x) -  \tfrac{1}{2}\OT_\infty(\mu,\nu).
	\end{align*}
	Again, a similar reasoning, incorporating \eqref{eq:rangePsi},  can be applied for $\hat \varphi_\varepsilon$.
	Finally, note that pointwise convergence of uniformly Lipschitz continuous functions on compact sets implies uniform convergence.
	\\[1ex]	
	ii) By continuity of the integral, we can directly infer that \eqref{add_constraint} is satisfied for any accumulation point.
	Note that for any fixed $\varphi \in C(\X)$, $x \in \X$ and $\varepsilon \to 0$ it holds
	\begin{align}
	T_{\mu,\varepsilon}(\varphi)(x) \to \min_{y \in \supp(\mu)} c(x,y) - \varphi(y),
	\end{align}
	see~\cite[Prop.~9]{FSVATP2018}, which by uniform Lipschitz continuity of $T_{\mu,\varepsilon}(\varphi)$ directly implies the convergence in $C(\X)$.
	Let \smash{$\{(\hat \varphi_{\varepsilon_j}, \hat \psi_{\varepsilon_j})\}_j$} be a subsequence converging to $(\hat \varphi_0, \hat \psi_0) \in C(\supp(\mu)) \times C(\supp(\nu))$.
	Then, we have
	\begin{align}
	\hat \psi_0 &= \lim_{j \rightarrow \infty} \hat \psi_{\varepsilon_j} = \lim_{j \rightarrow \infty} T_{\mu,\varepsilon_j} (\hat\varphi_{\varepsilon_j})\\
	&=\lim_{j \rightarrow \infty} \left( T_{\mu,\varepsilon_j} (\hat\varphi_{\varepsilon_j}) - T_{\mu,\varepsilon_j}(\hat \varphi_0) +  T_{\mu,\varepsilon_j}(\hat \varphi_0) \right).
	\end{align}
	By Lemma~\ref{prop:PropT} ii), it holds
	$$
	\|T_{\mu,\varepsilon_j} (\hat\varphi_{\varepsilon_j}) - T_{\mu,\varepsilon_j}(\hat \varphi_0)\|_\infty 
	\le \| \hat\varphi_{\varepsilon_j} - \hat \varphi_0\|_\infty
	$$
	and we conclude
	$$
	\hat \psi_0 = \lim_{j \rightarrow \infty}  T_{\mu,\varepsilon_j}(\hat \varphi_0) = \min_{y \in \supp(\mu)} c(\cdot,y) - \hat \varphi_0(y).
	$$
	Similarly, we get 
	\begin{align}
	\hat \varphi_0 &=  \min_{y \in \supp(\nu)} c(\cdot,y) - \hat \psi_0(y).
	\end{align}
	Thus, $(\hat \varphi_0,\hat \psi_0)$ can be extended to a feasible point in $C(\X)^2$ of \eqref{Wdual} by Remark~\ref{rem:ExtW}.
	
	Due to continuity of \eqref{eq:optimal energy} and since $\mathrm{OT}_\varepsilon$ is monotone in $\varepsilon$, 
	this implies 
	\[\lim_{j \to \infty} \OT_{\varepsilon_j}(\mu,\nu) = \int_\X \hat \varphi_0 \dx \mu + \int_\X \hat \psi_0 \dx \nu
	\leq  
	\OT(\mu,\nu) \leq \lim_{j \to \infty} \OT_{\varepsilon_j} (\mu,\nu).\]
	Hence, the extended potentials are optimal for \eqref{Wdual}.
	Since the subsequence choice was arbitrary, this also shows Proposition~\ref{prop:conv}~ii).
\end{proof}

So far we cannot show the convergence of the potentials for $\varepsilon \to 0$ for the fully general case.
Essentially, our approach would require that all $ T_{\mu,\varepsilon}$ are contractive with a uniform constant $\beta<1$, which is not the case.
Note that if we assume that the unregularized potentials satisfying \eqref{add_constraint} are unique, then ii) directly implies convergence of the restricted dual potentials, see also \cite[Thm.~3.3]{Ber20} and \cite{CS94}.
Nevertheless, we always observed convergence in our numerical examples.

\section{Sinkhorn divergence} \label{sec:sink_div}
The regularized OT functional $\OT_\varepsilon$ is biased, i.e., in general $\min_{\nu} \OT_\varepsilon(\nu, \mu) \neq \OT_\varepsilon(\mu, \mu)$.
Hence, the usage as distance measure is meaningless, which motivates the introduction of the \emph{Sinkhorn divergence}
\begin{equation}
S_{\varepsilon}(\mu, \nu) = \OT_{\varepsilon}(\mu, \nu) - \tfrac{1}{2} \OT_{\varepsilon}(\mu, \mu) - \tfrac{1}{2} \OT_{\varepsilon}(\nu, \nu).
\end{equation}
Indeed, it was shown that $S_{\varepsilon}$ is non-negative, bi-convex and metrizes the convergence in law under mild assumptions \cite{FSVATP2018}. 
Clearly, we have $S_0 = \OT$.
By \eqref{dual_kern} and Proposition \ref{lem:PropPot}, we obtain the following corollary.

\begin{corollary}\label{cor:s-d}
	Assume that $K \in C( \X \times \X)$ is symmetric and positive definite. 
	Set 
	$c(x,y) \coloneqq -K(x,y).$
	Then, it holds $S_\infty(\mu,\nu) = \tfrac12 \mathscr{D}_{K}^2(\mu,\nu)$ 
	and the optimal dual potential $\hat \varphi_K$ realizing $\mathscr{D}_K(\mu,\nu)$ is related to
	the uniform limits \smash{$\hat \varphi_\infty, \hat \psi_\infty$ of $\hat \varphi_{\varepsilon},\hat \psi_{\varepsilon}$} in $\OT_{\varepsilon}(\mu, \nu)$ with 
	constraint \eqref{add_constraint} 	by 
	\[
	\hat \varphi_K = \frac{\hat \varphi_\infty
		- \hat \psi_\infty}{\Vert \hat \varphi_\infty - \hat \psi_\infty \Vert_{H_K(\X)}}.
	\]
\end{corollary}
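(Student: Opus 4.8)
The plan is to derive both assertions by direct computation, feeding the explicit limit formulas of Proposition~\ref{lem:PropPot}~i) into the discrepancy representations \eqref{mercer_1_OT} and \eqref{dual_kern}. As a preliminary I would record that the hypothesis $c = -K$ does not literally meet the non-negativity requirement of Section~\ref{sec:OT}, but this is immaterial: replacing $K$ by $K - C$ with $C \ge \max_{\X^2} K$ turns $c$ into the non-negative cost $C - K$, shifts $\OT_\varepsilon$ and $\OT_\infty$ by $C$, shifts $\hat\varphi_\varepsilon,\hat\psi_\varepsilon,\hat\varphi_\infty,\hat\psi_\infty$ each by $C/2$ (consistently with the normalization \eqref{add_constraint}), and leaves $S_\varepsilon$, $\mathscr{D}_K^2$ (since $\xi(\X)=0$) and $\hat\varphi_K$ unchanged; hence one may assume $c \ge 0$ and apply the results of Sections~\ref{sec:sinkhorn} and \ref{sec:sink_div} verbatim.

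For $S_\infty(\mu,\nu) = \tfrac12\mathscr{D}_K^2(\mu,\nu)$: starting from $S_\varepsilon = \OT_\varepsilon(\mu,\nu) - \tfrac12\OT_\varepsilon(\mu,\mu) - \tfrac12\OT_\varepsilon(\nu,\nu)$ and using the pointwise convergence $\OT_\varepsilon \to \OT_\infty$ from Proposition~\ref{prop:conv}~i) for each of the three pairs, one obtains $S_\infty(\mu,\nu) = \OT_\infty(\mu,\nu) - \tfrac12\OT_\infty(\mu,\mu) - \tfrac12\OT_\infty(\nu,\nu)$. Substituting $c = -K$ into $\OT_\infty(\mu,\nu) = \int_{\X^2} c\,\dx(\mu\otimes\nu)$ turns the right-hand side into $\tfrac12\bigl(\int_{\X^2}K\dx(\mu\otimes\mu) + \int_{\X^2}K\dx(\nu\otimes\nu) - 2\int_{\X^2}K\dx(\mu\otimes\nu)\bigr)$, which is $\tfrac12\mathscr{D}_K^2(\mu,\nu)$ by \eqref{mercer_1_OT}.

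For the potential identity I would insert $c = -K$ into the limits of Proposition~\ref{lem:PropPot}~i). The common additive constant $\tfrac12\OT_\infty(\mu,\nu)$ cancels in $\hat\varphi_\infty - \hat\psi_\infty$, and using the symmetry of $K$ one reads off $\hat\varphi_\infty - \hat\psi_\infty = \int_\X K(x,\cdot)\dx\mu(x) - \int_\X K(x,\cdot)\dx\nu(x) = \int_\X K(x,\cdot)\dx\xi(x)$ with $\xi = \mu - \nu$. By \eqref{gg} this function lies in $H_K(\X)$ (not merely in $C(\X)$), so its $H_K(\X)$-norm is finite, and it is positive unless $\mu = \nu$; dividing by it reproduces exactly the expression for $\hat\varphi_K$ in \eqref{dual_kern}, whence $\hat\varphi_K = (\hat\varphi_\infty - \hat\psi_\infty)/\Vert\hat\varphi_\infty - \hat\psi_\infty\Vert_{H_K(\X)}$.

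There is no genuine obstacle here, since this is a corollary and the computations are short; the points that merely demand attention are the sign/constant bookkeeping (in particular that the $\tfrac12\OT_\infty$ terms cancel and that the symmetry of $K$ is what identifies $\hat\varphi_\infty-\hat\psi_\infty$ with the difference of kernel mean embeddings), the appeal to \eqref{gg} so that the $H_K(\X)$-norm in the denominator is meaningful, and the preliminary constant-shift reduction ensuring that the optimal transport framework of Section~\ref{sec:OT} is applicable to $c = -K$ in the first place.
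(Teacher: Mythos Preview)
Your proposal is correct and follows exactly the route the paper indicates: the paper merely writes ``By \eqref{dual_kern} and Proposition~\ref{lem:PropPot}, we obtain the following corollary'' and notes afterwards that \eqref{gg} guarantees $\hat\varphi_\infty,\hat\psi_\infty\in H_K(\X)$, which is precisely the computation you spell out. Your preliminary constant-shift argument handling the non-negativity of $c$ is a useful detail that the paper silently passes over.
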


Note that \eqref{gg} already implies that for the chosen $c$ it holds $\hat \varphi_\infty, \hat \psi_\infty \in H_K(\X)$.
By Corollary \ref{cor:s-d}, we have for $c(x,y) \coloneqq -K(x,y)$ that
$S_\infty(\mu,\nu) = \tfrac12 \mathscr{D}_{K}^2(\mu,\nu)$ if $K \in C(\X \times \X)$ is symmetric, positive definite.
For the cost $c(x,y) = \|x-y\|^p$ of the classical $p$-Wasserstein distance, we have already seen in Section~\ref{sec:discrepancies_OT} that $K(x,y) = -c(x,y)$ is not positive definite.
However, at least for $p=1$ the Kernel is conditionally positive definite of order 1 and can be tuned by Proposition~\ref{prop:graef} to a positive definite kernel by adding a constant, which neither changes the value of the discrepancy nor of the optimal dual potential.
More generally, we have the following corollary.

\begin{corollary}\label{cor:conn_disr}
	Let $K \in C(\X \times \X)$ be symmetric, conditionally positive definite of order 1, and let
	$\tilde K$ be the corresponding positive definite kernel in \eqref{cpd_1}.
	Then we have for $c = - \tilde K$ that
	$$
	S_\infty (\mu,\nu) = \tfrac12 \mathscr{D}_{K}^2(\mu,\nu)
	$$
	and for the optimal dual potentials 
	\begin{align*}
	\hat \varphi_\infty(x) 
	&= \int_\X -K(x,y) \dx \nu(y) + \frac12 \int_{\X^2} K \dx (\mu \otimes \nu) 
	+ K(x,\xi)  + \frac12 \bigl( c_\nu - c_\mu - K(\xi,\xi) \bigr),\\
	\hat \psi_\infty(y) 
	&= \int_\X -K(x,y) \dx \mu(x) + \frac12 \int_{\X^2} K  \dx (\mu \otimes \nu)
	+ K(\xi,y) + \frac12 \bigl( c_\mu - c_\nu - K(\xi,\xi)\bigr),
	\end{align*}
	with some fixed $\xi \in \X$ and $c_\mu,c_\nu$ defined as in \eqref{eq:xx}.
\end{corollary}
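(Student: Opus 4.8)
The plan is to reduce the statement to the positive definite case already handled in Corollary~\ref{cor:s-d} and Proposition~\ref{lem:PropPot}~i), applied to the modified kernel $\tilde K$ from \eqref{cpd_1}, and then to expand $\tilde K$ in terms of $K$. Throughout I write $u=\xi$ for the fixed base point used in \eqref{cpd_1}.

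First I would observe that $\tilde K$ is symmetric (because $K$ is) and continuous, and positive definite by Lemma~\ref{lem:cpd_2}~i). Hence Corollary~\ref{cor:s-d} applies verbatim with $\tilde K$ in the role of $K$ and $c=-\tilde K$, which gives $S_\infty(\mu,\nu)=\tfrac12\mathscr{D}_{\tilde K}^2(\mu,\nu)$. Since $K$ is conditionally positive definite of order $1$ and any two probability measures share their zeroth moment, Lemma~\ref{lem:cpd_2}~iii) yields $\mathscr{D}_{\tilde K}^2(\mu,\nu)=\mathscr{D}_K^2(\mu,\nu)$, which proves the first claim.

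For the potentials I would apply Proposition~\ref{lem:PropPot}~i) with cost $c=-\tilde K$. Using $\OT_\infty(\mu,\nu)=\int_{\X^2}c\dx(\mu\otimes\nu)=-\int_{\X^2}\tilde K\dx(\mu\otimes\nu)$, the proposition gives
\[
\hat\varphi_\infty(x)=-\int_\X \tilde K(x,y)\dx\nu(y)+\tfrac12\int_{\X^2}\tilde K\dx(\mu\otimes\nu),
\]
and the analogous identity for $\hat\psi_\infty$ with $\mu$ and $\nu$ exchanged in the single integral. It then remains to substitute $\tilde K(x,y)=K(x,y)-K(\xi,y)-K(x,\xi)+K(\xi,\xi)$ and to evaluate the resulting integrals: the mixed term $\int_{\X^2}\tilde K\dx(\mu\otimes\nu)$ produces $\int_{\X^2}K\dx(\mu\otimes\nu)-c_\nu-c_\mu+K(\xi,\xi)$ with $c_\mu,c_\nu$ as in \eqref{eq:xx} (using the symmetry of $K$ to identify $\int_\X K(\xi,y)\dx\nu(y)=c_\nu$ and $\int_\X K(x,\xi)\dx\mu(x)=c_\mu$), while $\int_\X\tilde K(x,y)\dx\nu(y)=\int_\X K(x,y)\dx\nu(y)-c_\nu-K(x,\xi)+K(\xi,\xi)$. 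Collecting the constant contributions gives exactly the displayed formula for $\hat\varphi_\infty$, and symmetrically for $\hat\psi_\infty$; as a sanity check one confirms $\int_\X\hat\varphi_\infty\dx\mu=\tfrac12\OT_\infty(\mu,\nu)$, i.e.\ the normalization \eqref{add_constraint} is respected.

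I do not anticipate a genuine obstacle: the argument is bookkeeping built on two already-proven facts, namely that $\tilde K$ is an admissible (symmetric, continuous, positive definite) kernel with $\mathscr{D}_{\tilde K}^2=\mathscr{D}_K^2$ on $\mathcal P(\X)$ (Lemma~\ref{lem:cpd_2}~i), iii)) and the explicit $\varepsilon\to\infty$ limits of the potentials (Proposition~\ref{lem:PropPot}~i)). The only point requiring care is the consistent use of the symmetry of $K$ and keeping track of which argument of $K$ is integrated against $\mu$ as opposed to $\nu$.
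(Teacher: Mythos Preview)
Your proposal is correct and follows exactly the same route as the paper: the first claim is obtained by combining Corollary~\ref{cor:s-d} with Lemma~\ref{lem:cpd_2}~iii), and the potential formulas by inserting $c=-\tilde K$ into Proposition~\ref{lem:PropPot}~i) and expanding $\tilde K$ in terms of $K$. You have simply spelled out the algebraic bookkeeping that the paper leaves implicit.
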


\begin{proof}
	By Corollary \ref{cor:s-d} and Lemma \ref{lem:cpd_2}, we obtain
	$$
	\mathrm{S}_\infty (\mu,\nu) = \tfrac12 \mathscr{D}_{\tilde K}(\mu,\nu)^2 = \tfrac12 \mathscr{D}_{K}(\mu,\nu)^2.
	$$
	The second claim follows by Proposition~\ref{lem:PropPot}.
\end{proof}

In the following, we want to characterize the convergence of the functional 
$S_\varepsilon(\cdot,\nu)$ in the limiting cases $\varepsilon \to 0$ and $\varepsilon \to \infty$
for fixed $\nu \in {\mathcal P}(\X)$.
Recall that  a sequence $\{F_n\}_{n\in\N}$ 
of functionals $F_n\colon {\mathcal P}(\X) \rightarrow (-\infty,+\infty]$ is said to $\Gamma$-converge to $F \colon  {\mathcal P}(\X) \rightarrow (-\infty,+\infty]$ 
if the following two conditions are fulfilled for every $\mu \in {\mathcal P}(\X)$, see~\cite{Braides02}:
\begin{enumerate}
	\item[i)] $F(\mu) \leq \liminf_{n \rightarrow \infty} F_n(\mu_n)$ whenever  $\mu_n \weakly  \mu$, 
	\item[ii)] there is a sequence $\{\mu_n\}_{n\in\N}$ with $\mu_n \weakly \mu $ and $\limsup_{n \to \infty} F_n(\mu_n) \le F(\mu)$.
\end{enumerate}
The importance of $\Gamma$-convergence relies in the fact that 
every cluster point of minimizers of $\{F_n\}_{n\in\N}$ is a minimizer of $F$.

\begin{proposition}\label{prop:gamma}
	It holds 
	$S_\varepsilon(\cdot,\nu) \xrightarrow{\Gamma}  S_\infty(\cdot,\nu)$ 	as $\varepsilon \to \infty$ 
	and 
	$S_\varepsilon(\cdot,\nu) \xrightarrow{\Gamma}  \OT(\cdot,\nu)$ as $\varepsilon \to 0$.
\end{proposition}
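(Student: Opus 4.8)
The plan is to prove the two $\Gamma$-convergences separately, in both cases exploiting that $S_\varepsilon(\mu,\nu)$ differs from $\OT_\varepsilon(\mu,\nu)$ only by the terms $-\tfrac12\OT_\varepsilon(\mu,\mu)$ and $-\tfrac12\OT_\varepsilon(\nu,\nu)$, and that these "correction" terms behave well under the relevant limits. Throughout I would use that on a compact Polish space weak convergence $\mu_n \weakly \mu$ is metrized by $W_1$ (and by any $W_p$), so continuity statements with respect to weak convergence are available, together with the monotonicity $\OT_{\varepsilon_1}\le\OT_{\varepsilon_2}$ from Lemma~\ref{lem:1}~iii).

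\textbf{Case $\varepsilon\to\infty$.} I want to show $S_\varepsilon(\cdot,\nu)\xrightarrow{\Gamma}S_\infty(\cdot,\nu)$ where, by Proposition~\ref{prop:conv}~i), $S_\infty(\mu,\nu)=\OT_\infty(\mu,\nu)-\tfrac12\OT_\infty(\mu,\mu)-\tfrac12\OT_\infty(\nu,\nu)=\int_{\X^2}c\dx(\mu\otimes\nu)-\tfrac12\int_{\X^2}c\dx(\mu\otimes\mu)-\tfrac12\int_{\X^2}c\dx(\nu\otimes\nu)$. The key observation is that the map $\mu\mapsto\OT_\infty(\mu,\mu)=\int_{\X^2}c\dx(\mu\otimes\mu)$ is \emph{weakly continuous}, since $\mu_n\weakly\mu$ implies $\mu_n\otimes\mu_n\weakly\mu\otimes\mu$ and $c\in C(\X^2)$. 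Likewise $\mu\mapsto\OT_\infty(\mu,\nu)$ is weakly continuous. Therefore $S_\infty(\cdot,\nu)$ is weakly continuous. For the liminf inequality (i), take $\mu_n\weakly\mu$ and write $S_\varepsilon(\mu_n,\nu)=\OT_\varepsilon(\mu_n,\nu)-\tfrac12\OT_\varepsilon(\mu_n,\mu_n)-\tfrac12\OT_\varepsilon(\nu,\nu)$; here I would need a liminf bound for $\OT_\varepsilon(\mu_n,\nu)$ and a limsup bound for $\OT_\varepsilon(\mu_n,\mu_n)$. For the first, I would argue that $\OT_\varepsilon(\mu_n,\nu)\ge\int_{\X^2}c\dx\hat\pi_\varepsilon^{(n)}$ where $\hat\pi_\varepsilon^{(n)}$ is the optimal plan; but one must control the double limit in $n$ and $\varepsilon$. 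The clean route is instead to use the constructive part (ii) first, and for (i) to combine weak lower semicontinuity of $\pi\mapsto\int c\dx\pi+\varepsilon\mathrm{KL}(\pi,\mu\otimes\nu)$ with a diagonal argument; for $\OT_\varepsilon(\mu_n,\mu_n)$, since $\OT_\varepsilon\le\OT_\infty$, we have $\tfrac12\OT_\varepsilon(\mu_n,\mu_n)\le\tfrac12\OT_\infty(\mu_n,\mu_n)\to\tfrac12\OT_\infty(\mu,\mu)$, giving the needed upper bound. For the limsup/recovery inequality (ii), take the constant sequence $\mu_n=\mu$; then $\limsup_{\varepsilon\to\infty}S_\varepsilon(\mu,\nu)=\limsup_{\varepsilon\to\infty}\bigl(\OT_\varepsilon(\mu,\nu)-\tfrac12\OT_\varepsilon(\mu,\mu)-\tfrac12\OT_\varepsilon(\nu,\nu)\bigr)=S_\infty(\mu,\nu)$ by Proposition~\ref{prop:conv}~i) applied to each of the three terms. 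For (i) I would in fact reduce to $\liminf_{n}\OT_{\varepsilon_n}(\mu_n,\nu)\ge\OT_\infty(\mu,\nu)$ along any $\varepsilon_n\to\infty$: this follows because $\OT_{\varepsilon_n}(\mu_n,\nu)\ge\int c\dx\hat\pi_n$ and $\hat\pi_n\weakly\mu\otimes\nu$ (the KL term forces concentration on the product, arguing as in Proposition~\ref{prop:conv}~i) but now with $\mu_n$ varying and using that $\OT_\infty(\mu_n,\nu)$ is bounded), whence $\liminf\int c\dx\hat\pi_n\ge\int c\dx(\mu\otimes\nu)$ by weak lsc of $\pi\mapsto\int c\dx\pi$ (here equality, since $c$ continuous).

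\textbf{Case $\varepsilon\to0$.} Now the target is $\OT(\cdot,\nu)$, and the crucial point is that the self-terms vanish: $\OT_\varepsilon(\mu,\mu)\to\OT(\mu,\mu)=0$ and $\OT_\varepsilon(\nu,\nu)\to\OT(\nu,\nu)=0$ as $\varepsilon\to0$, by Proposition~\ref{prop:conv}~ii) (using $c$ symmetric and $c(x,x)=0$, which follows from non-negativity together with the optimal plan being the diagonal — indeed $\OT(\mu,\mu)\le\int_{\X^2}c\dx(\mathrm{id}\times\mathrm{id})_\#\mu=\int_\X c(x,x)\dx\mu(x)=0$). For the recovery sequence (ii), take $\mu_n=\mu$ constant: then $S_{\varepsilon_n}(\mu,\nu)=\OT_{\varepsilon_n}(\mu,\nu)-\tfrac12\OT_{\varepsilon_n}(\mu,\mu)-\tfrac12\OT_{\varepsilon_n}(\nu,\nu)\to\OT(\mu,\nu)-0-0=\OT(\mu,\nu)$ by Proposition~\ref{prop:conv}~ii), so even $\lim=\OT(\mu,\nu)$, in particular $\limsup\le\OT(\mu,\nu)$. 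For the liminf inequality (i), let $\mu_n\weakly\mu$ and $\varepsilon_n\to0$; I would use the dual formulation \eqref{pre-dual}: for any fixed feasible pair $(\varphi,\psi)\in C(\X)^2$ with $\varphi(x)+\psi(y)\le c(x,y)$ one has $\OT_{\varepsilon_n}(\mu_n,\nu)\ge\int\varphi\dx\mu_n+\int\psi\dx\nu-\varepsilon_n\cdot(\text{bounded})$ since the exponential term $\exp((\varphi(x)+\psi(y)-c(x,y))/\varepsilon_n)-1\le0$ on the support and is uniformly bounded, so the $\varepsilon_n$-multiple of its integral tends to $0$; letting $n\to\infty$, $\liminf_n\OT_{\varepsilon_n}(\mu_n,\nu)\ge\int\varphi\dx\mu+\int\psi\dx\nu$, and taking the supremum over feasible $(\varphi,\psi)$ gives $\liminf_n\OT_{\varepsilon_n}(\mu_n,\nu)\ge\OT(\mu,\nu)$ via \eqref{Wdual}. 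Combining with $\OT_{\varepsilon_n}(\mu_n,\mu_n)\to0$ (for which I would combine the same dual lower bound with the trivial upper bound $\OT_{\varepsilon_n}(\mu_n,\mu_n)\le\OT_{\varepsilon_n}(\mu_n,\mu_n)$... more carefully: $\OT_{\varepsilon_n}(\mu_n,\mu_n)\ge\OT(\mu_n,\mu_n)=0$, and for the upper bound use $\OT_{\varepsilon_n}(\mu_n,\mu_n)\le\int c\dx(\mu_n\otimes\mu_n)\to\int c\dx(\mu\otimes\mu)$ — which need not be $0$; hence this naive bound is too weak) I instead bound $\OT_{\varepsilon_n}(\mu_n,\mu_n)$ above by using the diagonal plan and the regularizer: actually the simplest is $\OT_{\varepsilon_n}(\mu_n,\mu_n)\le\OT_{\infty}(\mu_n,\mu_n)$ is still not enough, so I would instead prove directly $\limsup_n\OT_{\varepsilon_n}(\mu_n,\mu_n)\le 0$ is false in general; the correct statement one needs is only $\liminf_n S_{\varepsilon_n}(\mu_n,\nu)\ge\OT(\mu,\nu)$, and I get this more robustly from the dual representation of $S_\varepsilon$ itself.

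\textbf{Main obstacle.} The hard part is the liminf inequality (i), specifically controlling the self-interaction terms $-\tfrac12\OT_\varepsilon(\mu_n,\mu_n)$ when \emph{both} $\mu_n$ varies and $\varepsilon$ moves, since these enter with a negative sign and an uncontrolled upper bound would destroy the inequality. In the $\varepsilon\to\infty$ case this is resolved cleanly by the monotonicity $\OT_\varepsilon\le\OT_\infty$ together with weak continuity of $\OT_\infty(\cdot,\cdot)$. In the $\varepsilon\to0$ case the right fix is to avoid splitting $S_\varepsilon$ into its three pieces for the liminf bound and instead work with a joint lower bound; one convenient device is that $S_\varepsilon(\mu,\nu)\ge 0$ always, combined with the recovery sequence already pinning the value of the constant sequence, but to get $\Gamma$-liminf for \emph{arbitrary} recovering sequences one should invoke the dual/potential description: by Proposition~\ref{lem:PropPot}~ii) the optimal potentials of $\OT_\varepsilon$ have accumulation points that are optimal Kantorovich potentials, and a short stability argument (the potentials are uniformly Lipschitz, hence by Arzelà--Ascoli one may pass to the limit in $\int\hat\varphi_{\varepsilon}\dx\mu_n+\int\hat\psi_{\varepsilon}\dx\nu$) yields $\liminf_n S_{\varepsilon_n}(\mu_n,\nu)\ge\OT(\mu,\nu)$. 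I expect this passage — making the double limit in $(n,\varepsilon_n)$ rigorous while keeping the self-terms under control — to be the only genuinely delicate point; everything else is bookkeeping on top of Propositions~\ref{prop:conv} and~\ref{lem:PropPot}.
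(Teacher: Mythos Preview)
Your recovery sequences (the $\limsup$ part) are fine and match the paper: take the constant sequence $\mu_n=\mu$ and invoke Proposition~\ref{prop:conv}. For the $\varepsilon\to\infty$ liminf your treatment of the self-term via $\OT_{\varepsilon_n}(\mu_n,\mu_n)\le\OT_\infty(\mu_n,\mu_n)$ together with weak continuity of $\OT_\infty$ is exactly what the paper does; your alternative route for the cross term (showing $\hat\pi_n\weakly\mu\otimes\nu$ via a KL/Pinsker argument) can be made to work but is heavier than necessary.

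The genuine gap is the $\varepsilon\to0$ liminf, precisely where you flag it: you need an \emph{upper} bound on $\OT_{\varepsilon_n}(\mu_n,\mu_n)$ that survives the double limit, and your candidates (diagonal plan, $\OT_\infty$ bound, ``dual representation of $S_\varepsilon$ itself'', Arzel\`a--Ascoli on potentials with varying $\mu_n$) either fail outright or remain vague. The paper resolves both liminf inequalities with one simple device you are missing: \emph{freeze an intermediate regularization level and use the weak continuity of $\OT_m$ for fixed $m$} (Lemma~\ref{lem:1}~ii)). Concretely, for $\varepsilon\to0$: given any fixed $m>0$, eventually $\varepsilon_n\le m$, so by monotonicity (Lemma~\ref{lem:1}~iii)) $\OT_{\varepsilon_n}(\mu_n,\mu_n)\le\OT_m(\mu_n,\mu_n)$ and $\OT_{\varepsilon_n}(\mu_n,\nu)\ge\OT(\mu_n,\nu)$; hence
\[
\liminf_n S_{\varepsilon_n}(\mu_n,\nu)\ \ge\ \liminf_n\Bigl(\OT(\mu_n,\nu)-\tfrac12\OT_m(\mu_n,\mu_n)\Bigr)-\tfrac12\OT_m(\nu,\nu)
=\OT(\mu,\nu)-\tfrac12\OT_m(\mu,\mu)-\tfrac12\OT_m(\nu,\nu),
\]
using that both $\OT(\cdot,\nu)$ and $\OT_m(\cdot,\cdot)$ are weakly continuous. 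Now send $m\to0$ and apply Proposition~\ref{prop:conv}~ii) to get $\OT_m(\mu,\mu)\to\OT(\mu,\mu)=0$ and likewise for $\nu$. The $\varepsilon\to\infty$ liminf is the mirror image: for fixed $m$, eventually $\varepsilon_n\ge m$, so $\OT_{\varepsilon_n}(\mu_n,\nu)\ge\OT_m(\mu_n,\nu)$ while $\OT_{\varepsilon_n}(\mu_n,\mu_n)\le\OT_\infty(\mu_n,\mu_n)$, pass to the limit in $n$ by weak continuity, then let $m\to\infty$ via Proposition~\ref{prop:conv}~i). This ``freeze $m$, pass to the limit in $n$, unfreeze $m$'' trick is the missing idea; once you have it, no potential-level arguments or diagonal plans are needed.
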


\begin{proof}
	In both cases the $\limsup$-inequality follows from Proposition~\ref{prop:conv} 
	by choosing for some fixed $\mu \in {\mathcal P}(\X)$ the constant sequence $\mu_n = \mu$, $n \in \mathbb N$.
	
	Concerning the $\liminf$-inequality, we first treat the case $\varepsilon \to \infty$.
	Let $\mu_n \weakly \mu$ and $\varepsilon_n \to \infty$.
	Since $\OT_\varepsilon(\mu,\nu)$ is increasing with $\varepsilon$, it holds
	for every fixed $m \in \N$ that
	\begin{align}
	\liminf_{n \rightarrow \infty} S_{\varepsilon_n}(\mu_n, \nu) 
	&=
	\liminf_{n \rightarrow \infty} \left( \OT_{\varepsilon_n}(\mu_n, \nu) - 	\tfrac{1}{2}\OT_{\varepsilon_n} (\mu_n, \mu_n) -\tfrac{1}{2} \OT_{\varepsilon_n} (\nu, \nu) \right)
	\\
	&\geq 
	\liminf_{n \rightarrow \infty} \left( \OT_{m}(\mu_n, \nu) - 	\tfrac{1}{2} \OT_\infty (\mu_n, \mu_n) \right) -\tfrac{1}{2} \OT_\infty(\nu, \nu).
	\end{align}
	Due to the weak continuity of $\OT_m$ and $\OT_\infty$, we obtain
	\[
	\liminf_{n \rightarrow \infty} S_{\varepsilon_n}(\mu_n, \nu) 
	\geq 
	\OT_m(\mu, \nu) - \tfrac{1}{2}\OT_\infty (\mu, \mu) -\tfrac{1}{2} \OT_\infty(\nu, \nu).
	\]
	Letting $m \rightarrow \infty$, Proposition~\ref{prop:conv} implies the $\liminf$-inequality.
	
	Next, we consider $\varepsilon \to 0$.
	Let $\mu_n \weakly \mu$ and $\varepsilon_n \to 0$.
	With similar arguments as above we obtain for any fixed $m \in \mathbb N$ that
	\begin{align}
	\liminf_{n \rightarrow \infty} S_{\varepsilon_n}(\mu_n, \nu) 
	&\geq 
	\liminf_{n \rightarrow \infty} \left( \OT(\mu_n, \nu) - \tfrac{1}{2}\OT_m (\mu_n, \mu_n) \right) - \tfrac{1}{2} \OT_m(\nu, \nu)
	\end{align}
	and weak continuity of $\OT_m$ and $\OT$ implies
	\[
	\liminf_{n \rightarrow \infty} S_{\varepsilon_n}(\mu_n, \nu) 
	\geq 
	\OT(\nu, \mu)  - \tfrac{1}{2}\OT_m (\mu, \mu) 
	-\tfrac{1}{2} \OT_m(\nu, \nu).
	\]
	Using again Proposition~\ref{prop:conv}, we verify the $\liminf$-inequality.
\end{proof}

\section{Numerical approach and examples}\label{sec:numerics_OT}
In this section, we discuss the Sinkhorn algorithm for computing $\mathrm{OT}_{\varepsilon}$ based on the (pre)-dual form~\eqref{pre-dual}
and show some numerical examples.
As pointed out in Remark \ref{rem:restrict}, we can restrict the potentials and the update operator \eqref{eq:condphi} to $\supp(\mu)$ and $\supp(\nu)$, respectively.
In particular, this restriction results in a discrete problem if both input measures are atomic.
For a fixed starting iterate $\psi^{(0)}$, the Sinkhorn algorithm iterates are defined as
\begin{align}
\varphi^{(i+1)} &= T_{\nu,\varepsilon}(\psi^{(i)}),\\
\psi^{(i+1)} &= T_{\mu,\varepsilon}(\varphi^{(i+1)}).
\end{align}
Equivalently, we could rewrite the scheme with just one potential and the following update 
$\psi^{(i+1)} = T_{\mu,\varepsilon}\circ T_{\nu,\varepsilon}(\psi^{(i)})$.
According to Lemma~\ref{prop:PropT}, the operator $T_{\mu,\varepsilon}\circ T_{\nu,\varepsilon}$ is contractive 
and hence the Banach fixed point theorem implies that the algorithm converges linearly.
Note that it suffices to enforce the additional constraint \eqref{add_constraint} after the Sinkhorn scheme by adding an appropriately chosen constant.
Then, the value of $\OT_\varepsilon(\mu, \nu)$ can be computed from the optimal potentials using~\eqref{eq:optimal energy}.
Here, we do not want to go into more detail on implementation issues, since this is not the main scope of this chapter.
The numerical examples merely serve as an illustration of the theoretical results.
All computations in this section are performed using \mbox{GEOMLOSS}, a publicly available PyTorch implementation for regularized optimal transport.
Implementation details can be found in Feydy et al.~\cite{FSVATP2018} and in the corresponding GitHub repository.

\paragraph{Demonstration of convergence results.}
In the following, we present a numerical toy example for illustrating the convergence results from the previous sections.
First, we want to verify the interpolation behavior of $S_\varepsilon(\mu,\nu)$ between $\OT(\mu,\nu)$ and $\mathscr{D}_K(\mu,\nu)$.
We choose $\X = [0,1]$, $c(x,y) = \vert x-y \vert$ and the probability measures $\mu$ and $\nu$ depicted in Fig.~\ref{fig:1}.
The resulting energies $S_\varepsilon(\mu, \nu)$ in log-scale are plotted in the same figure.

\begin{figure}[t]
	\begin{subfigure}[t]{0.325\textwidth}
		\includegraphics[width=\textwidth]{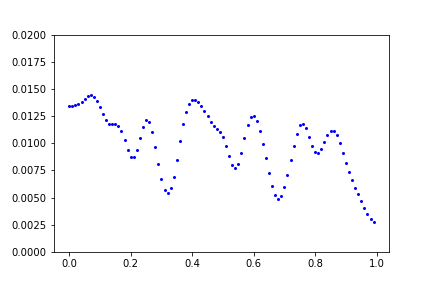}
		\caption{Measure $\mu$}
	\end{subfigure}
	\begin{subfigure}[t]{0.325\textwidth}
		\includegraphics[width=\textwidth]{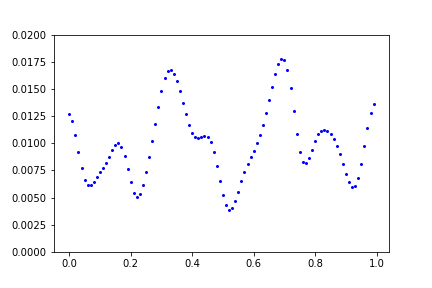}
		\caption{Measure $\nu$}
	\end{subfigure}
	\begin{subfigure}[t]{0.325\textwidth}
		\includegraphics[width=\textwidth]{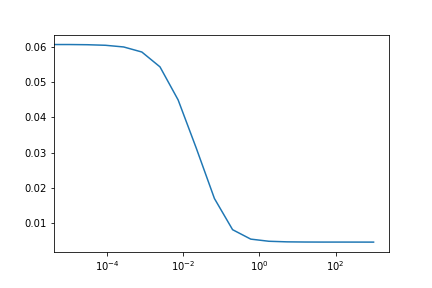}
		\caption{Values $S_\varepsilon(\mu,\nu)$ for increasing~$\varepsilon$}
	\end{subfigure}
	\caption{Energy values between $S_0$ and $S_\infty$ for two given measures on $[0,1]$ and cost function $c(x,y) = |x-y|$.
		Every blue dot corresponds to the position and the weight of a Dirac measure.}\label{fig:1}
\end{figure}

We observe that the values converge as shown in Proposition \ref{prop:conv} and that the change mainly happens in the interval $[10^{-2}, 10^1]$.
Additionally, the numerical results indicate $S_{\varepsilon_1}(\mu, \nu) \leq S_{\varepsilon_2}(\mu, \nu)$ for $\varepsilon_1 > \varepsilon_2$, which is the opposite behavior as for $\OT_\varepsilon$ where the energies increase, see Lemma~\ref{lem:1}~iii).
So far we are not aware of any theoretical result in this direction for $S_{\varepsilon}(\mu, \nu)$.

Next, we investigate the behavior of the corresponding optimal potentials $\hat \varphi_\varepsilon$ and \smash{$\hat \psi_\varepsilon$} in~\eqref{pre-dual}.
The convergence of the potentials as shown in Proposition~\ref{lem:PropPot}~iii) is numerically verified in Fig.~\ref{fig:2.1}.
Further, the corresponding potentials $\hat \varphi_\varepsilon$ are depicted in Fig.~\ref{fig:2.2} and the differences $\hat \varphi_\varepsilon - \hat \psi_\varepsilon$ are depicted in Fig.~\ref{fig:2.3}.
According to Corollary \ref{cor:s-d}, this difference is related to the optimal potential $\hat \varphi_K$ in the dual formulation of the related discrepancy.
The shape of the potentials ranges from something almost linear for small $\varepsilon$ to something more quadratic for large $\varepsilon$.
Again, we observe that the changes mainly happen for $\varepsilon$ in the interval $[10^{-2}, 10^1]$ and that numerical instabilities start to occur for $\varepsilon>10^3$.
For small values of $\varepsilon$, we actually observe numerical convergence and that the relation $\hat \psi_\varepsilon \approx - \hat \varphi_\varepsilon$ holds true, see Fig.~\ref{fig:EqSmall}.
This fits the theoretical findings for $W_1(\mu, \nu)$ in Section~\ref{sec:OT}.

\begin{figure}[t]
	\centering
	\begin{subfigure}[t]{0.325\textwidth}
		\includegraphics[width=\textwidth]{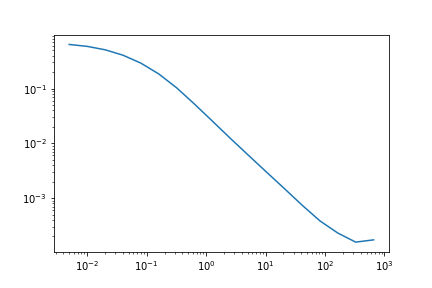}
		\caption{$\sup_{\supp(\mu)} \vert\hat \varphi_{\varepsilon} - \hat \varphi_{\infty} \vert$ for increasing values of $\varepsilon$}
	\end{subfigure}
	\begin{subfigure}[t]{0.325\textwidth}
		\includegraphics[width=\textwidth]{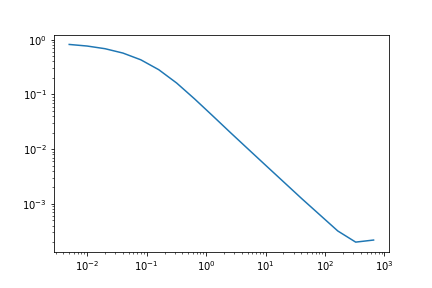}
		\caption{$\sup_{\supp(\nu)} \vert\hat \psi_{\varepsilon} - \hat \psi_{\infty}\vert$ for increasing values of $\varepsilon$}
	\end{subfigure}
	\begin{subfigure}[t]{0.325\textwidth}
		\includegraphics[width=\textwidth]{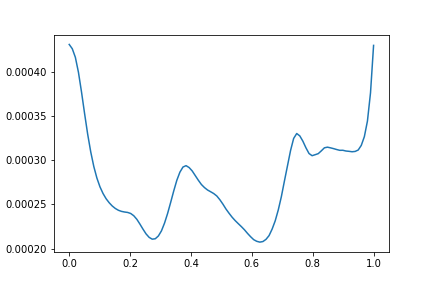}
		\caption{$\hat \varphi_{1e^{-4}} + \hat \psi_{1e^{-4}}$}
		\label{fig:EqSmall}
	\end{subfigure}
	\caption{Numerical verification of Prop.~\ref{lem:PropPot} and of $\hat \psi_\varepsilon \approx - \hat \varphi_\varepsilon$ for small $\varepsilon$.}\label{fig:2.1}
\end{figure}

\begin{figure}[t]
	\centering
	\begin{subfigure}[t]{0.325\textwidth}
		\includegraphics[width=\textwidth]{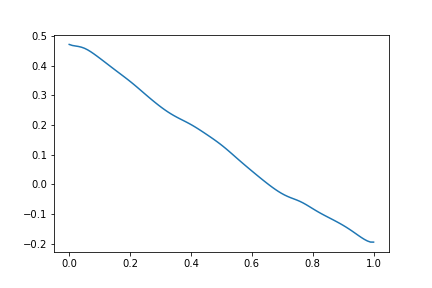}
		\caption{$\hat \varphi_{0.02}$}\label{fig:EqSmall2}
	\end{subfigure}
	\begin{subfigure}[t]{0.325\textwidth}
		\includegraphics[width=\textwidth]{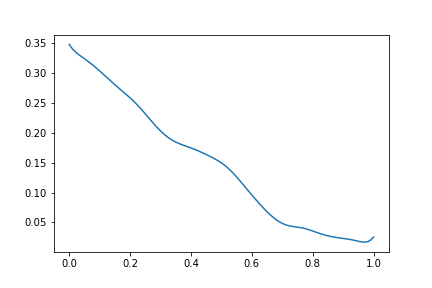}
		\caption{$\hat \varphi_{0.08}$}
	\end{subfigure}
	\begin{subfigure}[t]{0.325\textwidth}
		\includegraphics[width=\textwidth]{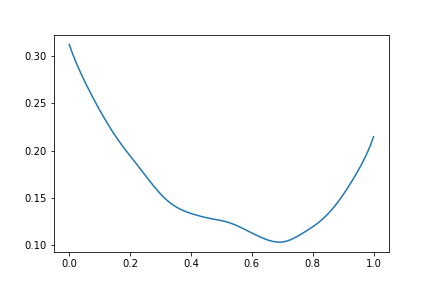}
		\caption{$\hat \varphi_{0.32}$}
	\end{subfigure}
	
	\begin{subfigure}[t]{0.325\textwidth}
		\includegraphics[width=\textwidth]{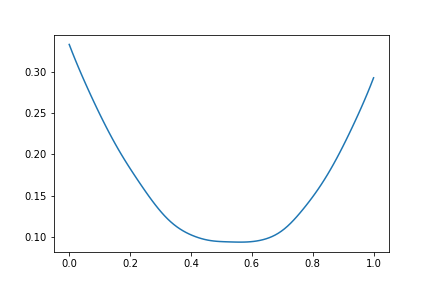}
		\caption{$\hat \varphi_{1.28}$}
	\end{subfigure}
	\begin{subfigure}[t]{0.325\textwidth}
		\includegraphics[width=\textwidth]{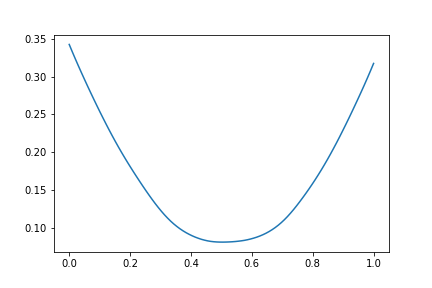}
		\caption{$\hat \varphi_{81.92}$}
	\end{subfigure}
	\begin{subfigure}[t]{0.325\textwidth}
		\includegraphics[width=\textwidth]{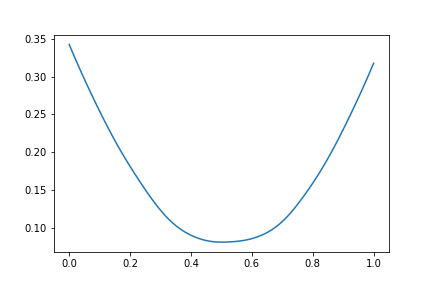}
		\caption{$\hat \varphi_{\infty}$}
	\end{subfigure}
	\caption{Optimal potentials $\hat \varphi_{\varepsilon}$ in $\OT_\varepsilon(\mu,\nu)$ for increasing values of $\varepsilon$.}\label{fig:2.2}
\end{figure}

\begin{figure}[t]
	\begin{subfigure}[t]{0.325\textwidth}
		\includegraphics[width=\textwidth]{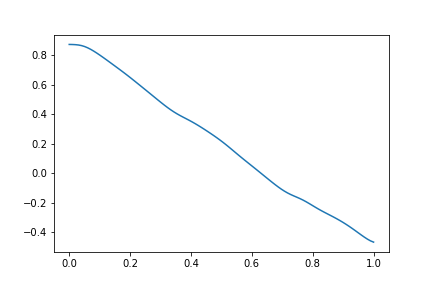}
		\caption{$\hat \varphi_{0.02} - \hat \psi_{0.02}$}\label{fig:EqSmall1}
	\end{subfigure}
	\begin{subfigure}[t]{0.325\textwidth}
		\includegraphics[width=\textwidth]{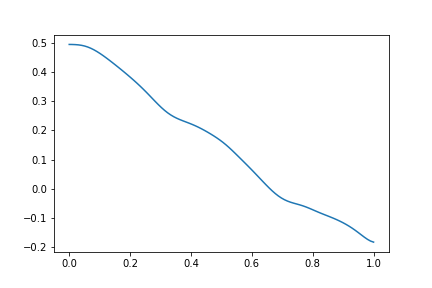}
		\caption{$\hat \varphi_{0.08} - \hat \psi_{0.08}$}
	\end{subfigure}
	\begin{subfigure}[t]{0.325\textwidth}
		\includegraphics[width=\textwidth]{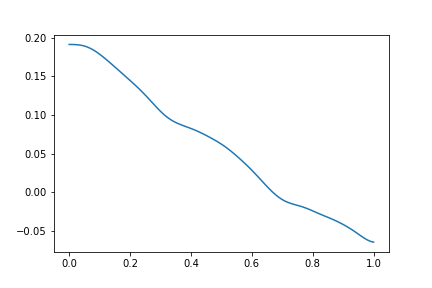}
		\caption{$\hat \varphi_{0.32} - \hat \psi_{0.32}$}
	\end{subfigure}
	
	\begin{subfigure}[t]{0.325\textwidth}
		\includegraphics[width=\textwidth]{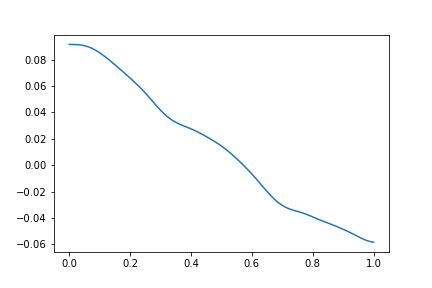}
		\caption{$\hat \varphi_{1.28} - \hat \psi_{1.28}$}
	\end{subfigure}
	\begin{subfigure}[t]{0.325\textwidth}
		\includegraphics[width=\textwidth]{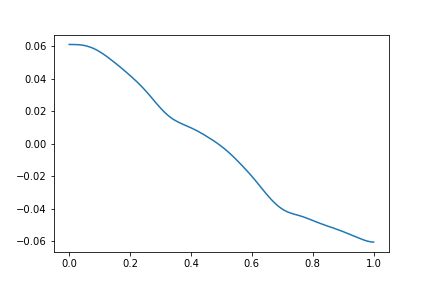}
		\caption{$\hat \varphi_{81.92} - \hat \psi_{81.92}$}
	\end{subfigure}
	\begin{subfigure}[t]{0.325\textwidth}
		\includegraphics[width=\textwidth]{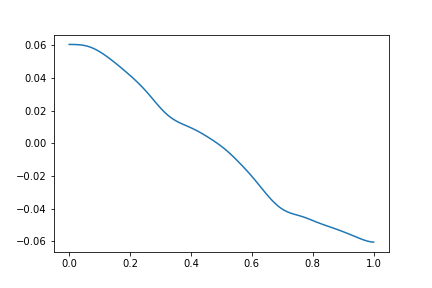}
		\caption{$\hat \varphi_{\infty} - \hat \psi_{\infty}$}
	\end{subfigure}
	\caption{Difference $\hat \varphi_{\varepsilon} - \hat \psi_{\varepsilon}$ of optimal potentials in $\OT_\varepsilon(\mu,\nu)$ for increasing $\varepsilon$,
		where the normalized function $\hat \varphi_{\infty} - \hat \psi_{\infty}$ coincides with the optimal dual potential $\hat \varphi_K$ in the discrepancy
		by Corollary \ref{cor:conn_disr}.}\label{fig:2.3}
\end{figure}
\medskip

\paragraph{Dithering results.}
Now, we want to take a short glimpse at a more involved problem.
In the following, we investigate the influence of using $S_\varepsilon$ with different values $\varepsilon$ as approximation quality measure in dithering.
For this purpose, we choose $\X = [-1,1]^2$, $c(x,y) = \vert x-y \vert$ and $\mu = C \, \exp(-9\Vert x \Vert^2/2) (\lambda \otimes \lambda)$, where $C \in \R$ is a normalizing constant.
In order to deal with a fully discrete problem, $\mu$ is approximated by an atomic measure with $90 \times 90$ spikes on a regular grid.
Then, we approximate $\mu$ with a measure $\nu \in \mathcal{P}_\text{emp}^{400}(\X)$ (empirical measure with 400 spikes) in terms of the following objective function
\begin{equation}\label{eq:Dither}
\min_{\nu \in \mathcal P_\text{emp}^{400}(\X)} S_\varepsilon(\mu,\nu).
\end{equation}
For solving this problem, we can equivalently minimize over the positions of the equally weighted Dirac spikes in $\nu$.
Hence, we need the gradient of $S_\varepsilon$ with respect to these positions.
If $\varepsilon = \infty$, this gradient is given by an analytic expression.
Otherwise, we can apply automatic differentiation tools to the Sinkhorn algorithm in order to compute a numerical gradient, see \cite{FSVATP2018} for more details.
Here, it is important to ensure high enough numerical precision and to perform enough Sinkhorn iterations.
In any case, the gradient serves as input for the L-BFGS-B (Quasi-Newton) method in which the Hessian is approximated in a memory efficient way \cite{BLNZ95}.
The numerical results are depicted in Fig.~\ref{fig:dithering}, where all examples are iterated to high numerical precision.
Numerically, we nicely observe the convergence of $S_\varepsilon(\mu,\hat \nu)$ in the limits $\varepsilon \to 0$ and $\varepsilon \to \infty$ 
as implied from the $\Gamma$-convergence result in Proposition~\ref{prop:gamma}.
Visually, the result using Fourier methods is most appealing. Differences could be caused by the different numerical approaches.
In particular, the minimization of \eqref{eq:Dither} is quite challenging and our applied approach is pretty straight forward without including any special knowledge about the problem.
Noteworthy, the Fourier method uses a truncation of $S_\infty=\tfrac12 \mathscr{D}_K^2$ in the Fourier domain, see \eqref{mercer_2_OT}, namely 
\[
\sum_{k=0}^N \alpha_k  \big| \hat{\mu}_{k}-\hat{\nu}_{k}  \big|^2, \qquad N \coloneqq 128
\]
as target functional, see \cite{Graf:2013fk}.
The value of $S_\infty$ for the Fourier method is slightly larger than the result using optimization of $S_\infty$ directly.
Since the computational cost increases as $\varepsilon$ gets smaller, we suggest to choose $\varepsilon \approx 1$ or to directly stick with discrepancies.
This also avoids that the approximation rates suffer from the so-called curse of dimensionality.

Finally, note that we sampled $\mu$ with a lot more points than we used for the dithering.
If not enough points are used, we would observe clustering of the dithered measure around the positions of $\mu$.
One possibility to avoid such a behavior for $S_\varepsilon$ could be to use the semi-discrete approach described in \cite{GCPB2016}, avoiding any sampling of the measure $\mu$.
In the Fourier based approach, this issue was less pronounced.

\begin{figure}[t]
	\centering
	\begin{subfigure}[t]{0.325\textwidth}
		\includegraphics[width=\textwidth]{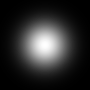}
		\caption{Fixed measure $\mu$.}
	\end{subfigure}
	\begin{subfigure}[t]{0.325\textwidth}
		\includegraphics[width=\textwidth]{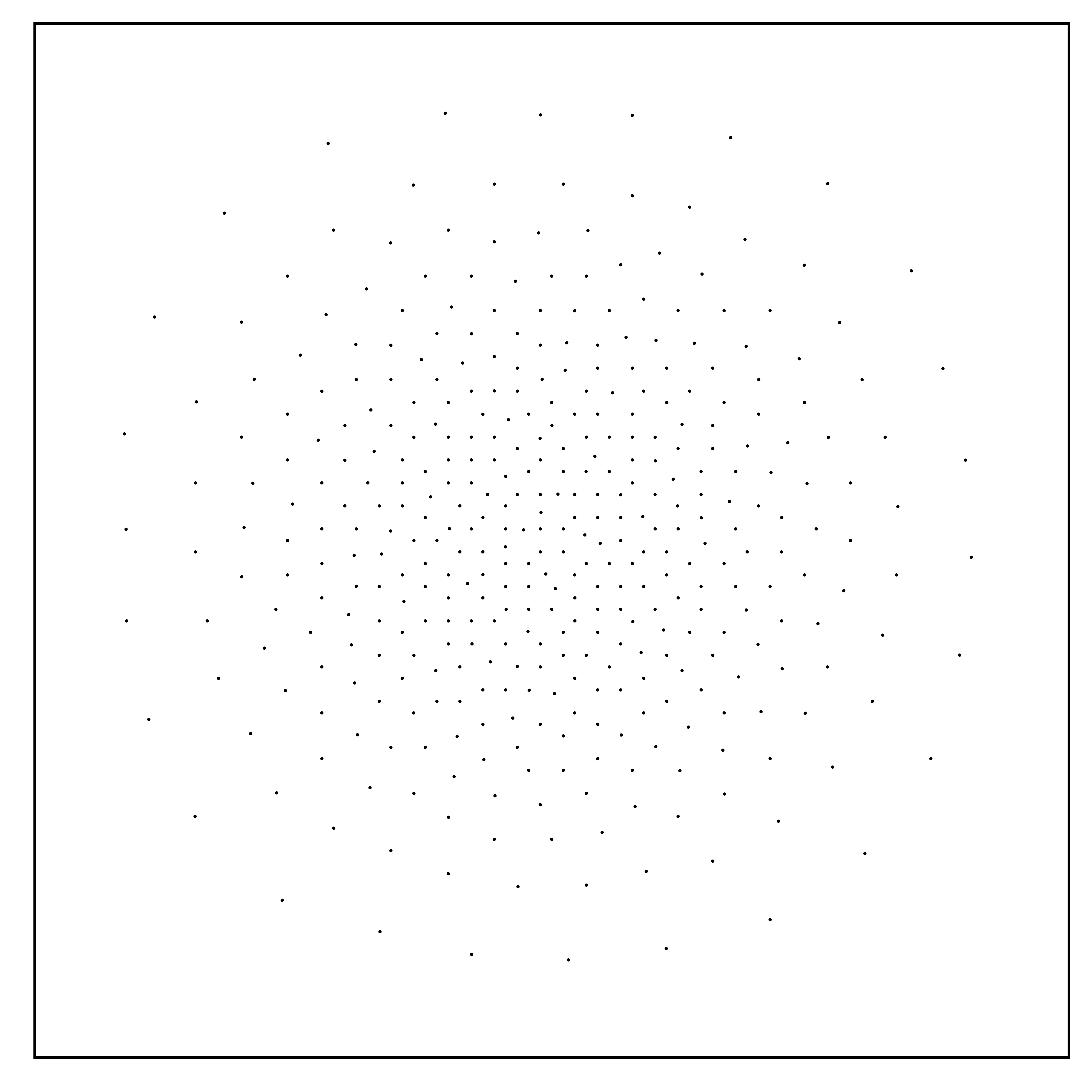}
		\caption{$S_{0.03}(\mu,\hat \nu) = 1.303e^{-3}$.}
	\end{subfigure}
	\begin{subfigure}[t]{0.325\textwidth}
		\includegraphics[width=\textwidth]{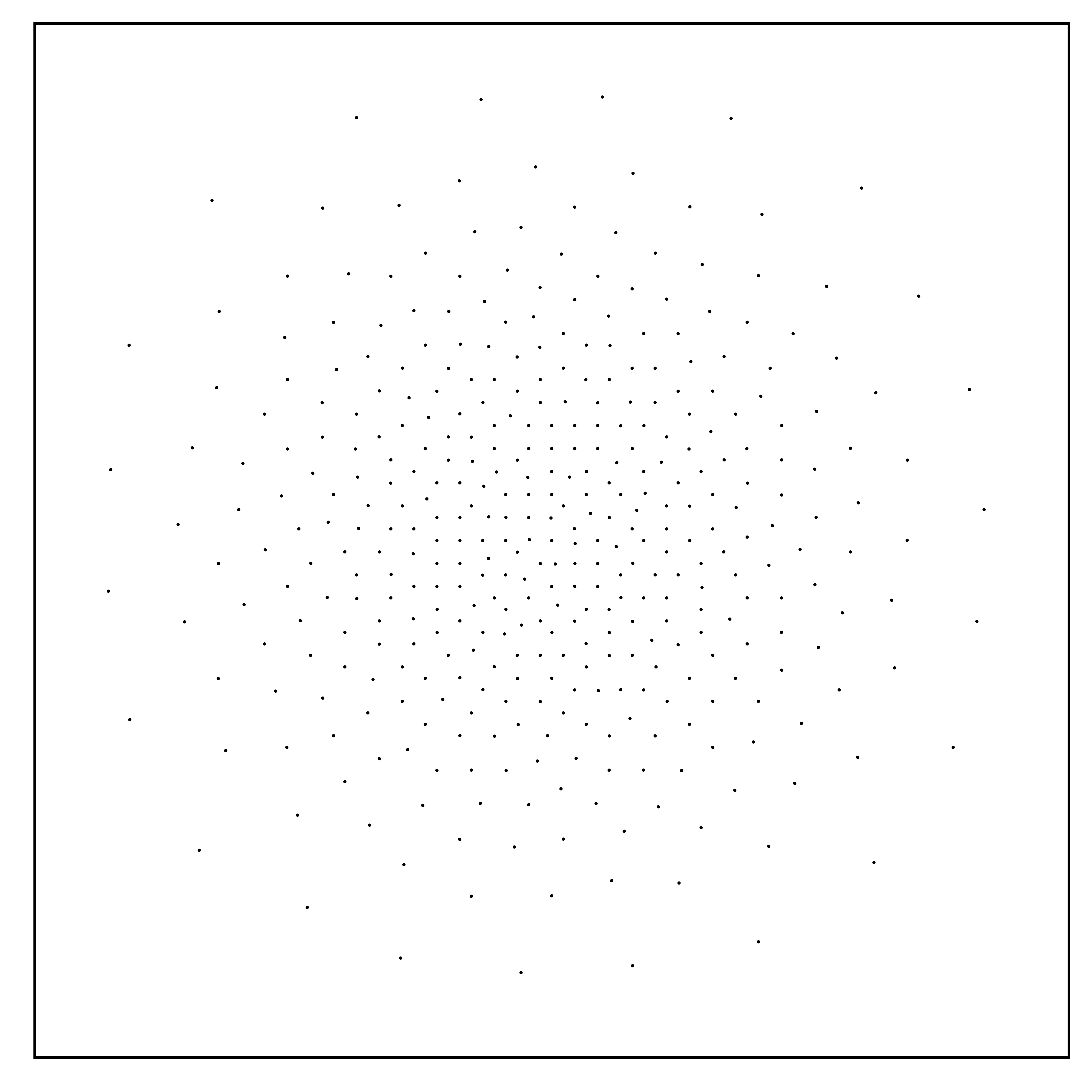}
		\caption{$S_{0.15}(\mu,\hat \nu) = 1.071e^{-4}$.}
	\end{subfigure}
	
	\begin{subfigure}[t]{0.325\textwidth}
		\includegraphics[width=\textwidth]{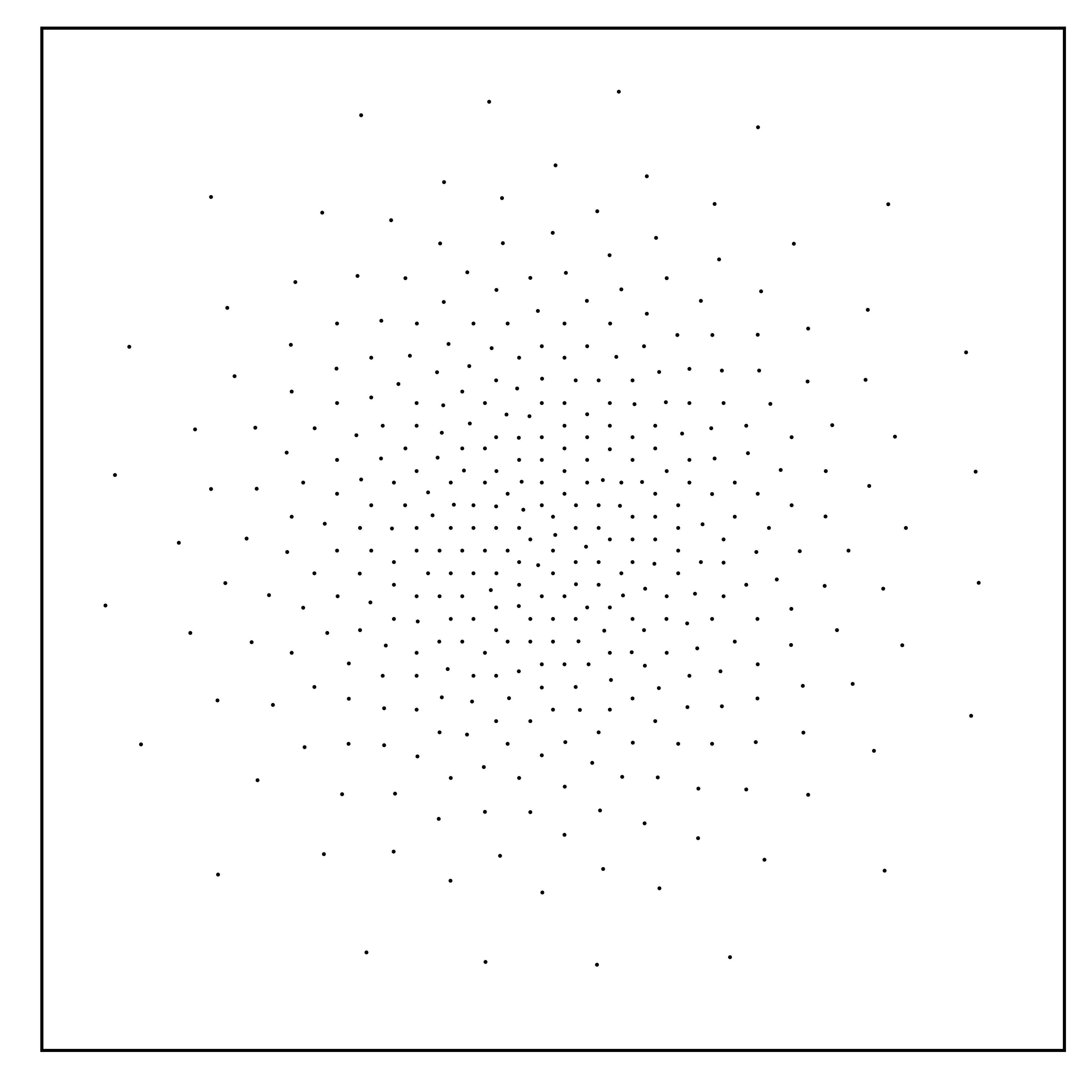}
		\caption{$S_{1.25}(\mu,\hat \nu) = 1.491e^{-5}$.}
	\end{subfigure}
	\begin{subfigure}[t]{0.325\textwidth}
		\includegraphics[width=\textwidth]{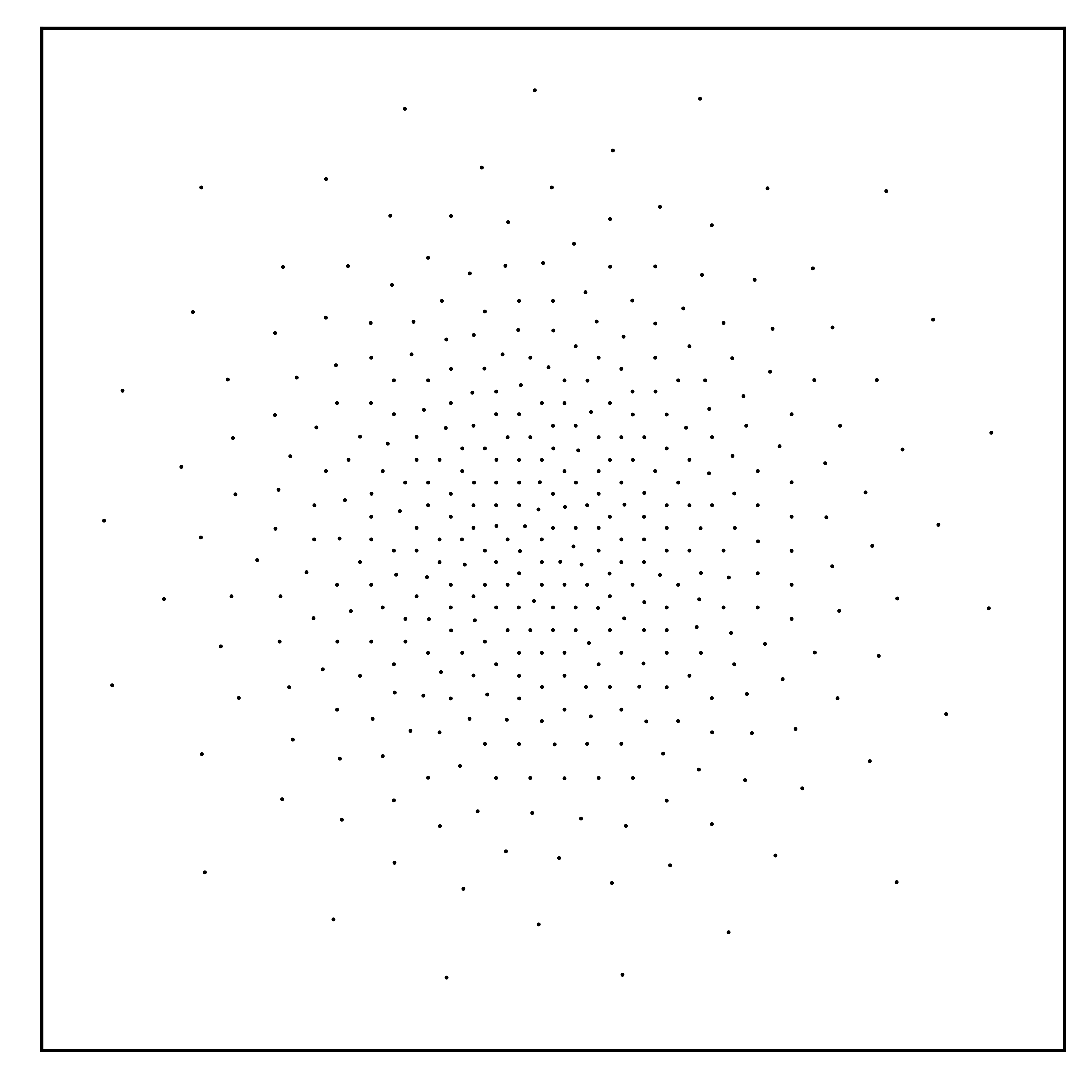}
		\caption{$S_\infty(\mu,\hat \nu) = 1.118e^{-5}$.}
	\end{subfigure}
	\begin{subfigure}[t]{0.325\textwidth}
		\includegraphics[width=\textwidth]{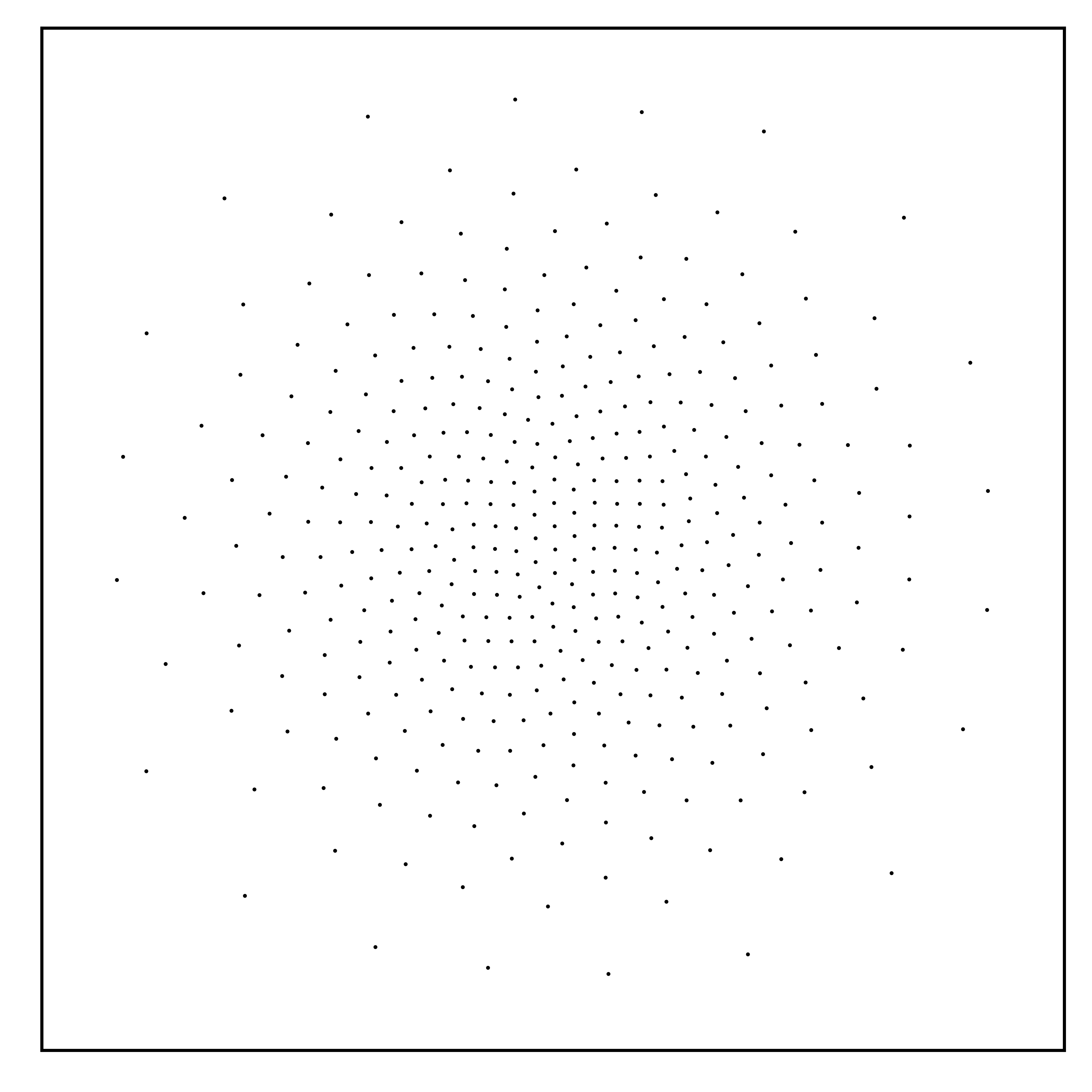}
		\caption{Fourier formulation~\cite{EGNS2019},\newline $S_\infty(\mu,\hat \nu) = 1.156e^{-5}$.}
	\end{subfigure}
	\caption{Optimal approximations $\hat \nu$ and corresponding energies $S_\varepsilon(\mu,\hat \nu)$ for increasing $\varepsilon$.}
	\label{fig:dithering}
\end{figure}

\section{Conclusions}\label{sec:conclusions_OT}
In this chapter, we examined the behavior of the Sinkhorn divergences $S_\varepsilon$ as $\varepsilon \to \infty$ and $\varepsilon \to 0$, with focus on the first case, which leads to discrepancies for appropriate cost functions and kernels.
We considered a quite general scenario of measures involving, e.g., convex combinations of measures with densities and point measures (spikes).
Besides application questions, some open theoretical problem are left.
While $\OT_\varepsilon$ is monotone increasing in $\varepsilon$ for any cost function $c$, we observed numerically for $c(x,y) = \Vert x -y \Vert$ that $S_\varepsilon$ is monotone decreasing.
Further, in Proposition~\ref{lem:PropPot}~ii), we were not able to show convergence of the whole sequence of optimal potentials $\{(\hat \varphi_\varepsilon,\hat \psi_\varepsilon)\}_\varepsilon$ without further assumptions so far. 

\appendix
	\section{Basic theorems}\label{sec:thm}
	
	We  frequently apply the theorem of Arzelà--Ascoli.
	By definition, a sequence $\{f_n\}_{n \in \mathbb N}$ of continuous functions on $\mathbb X$ is 
	\emph{uniformly bounded}, if there exists a constant $M\ge 0$ 
	independent of $n$ and $x$ such that for all $f_n$ and all $x \in \mathbb X$ it holds
	$\left|f_n(x)\right|\leq M$.
	The sequence is said to be \emph{uniformly equi-continuous} if, for every $\varepsilon > 0$, there exists a $\delta > 0$ 
	such that for all functions $f_n$
	$$
	\left|f_{n}(x)-f_{n}(y)\right|<\varepsilon 
	$$
	whenever $d_{\mathbb X}(x,y)< \delta$.
	
	\begin{theorem}(Arzelà--Ascoli)\label{thm_aa}
		Let $\{f_n\}_{n \in \mathbb N}$ be a uniformly bounded and uniformly equi-continuous sequence of continuous functions on $\mathbb X$.
		Then, the sequence has a uniformly convergent subsequence.
	\end{theorem}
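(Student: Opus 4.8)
The plan is to prove the theorem by the classical diagonal extraction argument, using that $\X$ is separable and compact. First I would fix a countable dense subset $D = \{x_k : k \in \N\} \subset \X$, which exists since $\X$ is Polish. By uniform boundedness, the real sequence $\{f_n(x_1)\}_n$ is bounded, so by Bolzano--Weierstrass it has a convergent subsequence indexed by an infinite set $N_1 \subset \N$; inductively, given $N_{k-1}$, the bounded sequence $\{f_n(x_k)\}_{n \in N_{k-1}}$ has a convergent subsequence indexed by $N_k \subset N_{k-1}$. Choosing the $j$-th element of $N_j$ produces a diagonal subsequence $\{g_j\}_{j \in \N}$, itself a subsequence of $\{f_n\}$, such that $\{g_j(x_k)\}_j$ converges for every $k \in \N$.

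Next I would show that $\{g_j\}$ is uniformly Cauchy on $\X$. Given $\varepsilon > 0$, uniform equi-continuity provides $\delta > 0$ with $|g_j(x) - g_j(y)| < \varepsilon/3$ whenever $d_{\X}(x,y) < \delta$, uniformly in $j$. Since $D$ is dense, the open balls $B(x_k,\delta)$, $k \in \N$, cover $\X$, and compactness yields a finite subcover $B(x_{k_1},\delta), \dots, B(x_{k_m},\delta)$. As each of the finitely many real sequences $\{g_j(x_{k_i})\}_j$, $i = 1,\dots,m$, converges, there is an index $J$ such that $|g_j(x_{k_i}) - g_l(x_{k_i})| < \varepsilon/3$ for all $j,l \ge J$ and all $i$. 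For an arbitrary $x \in \X$, picking $i$ with $d_{\X}(x,x_{k_i}) < \delta$, the triangle inequality gives $|g_j(x) - g_l(x)| \le |g_j(x)-g_j(x_{k_i})| + |g_j(x_{k_i})-g_l(x_{k_i})| + |g_l(x_{k_i})-g_l(x)| < \varepsilon$ for all $j,l\ge J$, hence $\sup_{x \in \X}|g_j(x)-g_l(x)| \le \varepsilon$.

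Finally, a uniformly Cauchy sequence of continuous functions on $\X$ converges uniformly to a limit $f$, which is automatically continuous; equivalently, $\{g_j\}$ converges in the Banach space $(C(\X),\|\cdot\|_{C(\X)})$ recalled in Section~\ref{sec:basics_OT}. Thus $\{g_j\}$ is the desired uniformly convergent subsequence of $\{f_n\}$.

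The only delicate point is the $\varepsilon/3$ estimate in the second step: the finitely many centers used for the Cauchy comparison must lie in the dense set $D$, so that the diagonalization of the first step applies at those points. This is why I cover $\X$ by $\delta$-balls centered at points of $D$ rather than by arbitrary $\delta$-balls; everything else is routine.
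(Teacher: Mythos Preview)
Your proof is the standard, correct diagonal extraction argument for Arzel\`a--Ascoli and works exactly as written on the compact Polish space $\X$ assumed throughout the paper. Note, however, that the paper does not actually prove Theorem~\ref{thm_aa}: it is merely stated in the appendix as a basic reference result, so there is no approach to compare against. Your argument is a perfectly acceptable self-contained proof; one could equally appeal to totally boundedness of $\X$ to obtain a finite $\delta$-net directly, but your choice of centering the balls at points of the countable dense set $D$ is the cleanest way to ensure the diagonal subsequence already converges at the finitely many comparison points.
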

	
	For the dual problems, we want to extend continuous functions from $A \subset \X$ to the whole space, which is possible by the following theorem.
	In the standard version, the theorem comes without the bounds, but they can be included directly since $\min$ and $\max$ of two continuous functions are again continuous functions.
	
	\begin{theorem} (Tietze Extension Theorem) \label{thm_tietze}
		Let a closed subset $A \subset \X$ and a continuous function $f\colon A \to \R$ be given.
		If $g,h \in C(\X)$ are such that $g \leq h$ and $g(x) \leq f(x) \leq h(x)$ for all $x \in A$, then there exists a continuous function $F \colon \X \to \R$ 
		such that $F(x) = f(x)$ for all $x \in A$ and $g(x) \leq F(x) \leq h(x)$ for all $x \in \X$.
	\end{theorem}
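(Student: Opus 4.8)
The plan is to reduce the statement to the classical (unbounded) Tietze extension theorem and then to impose the pointwise bounds by a truncation. Recall that $\X$ is a compact Polish space, hence a metric space, and therefore normal, so the classical theorem applies to the closed subset $A \subset \X$.

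First I would apply the classical Tietze extension theorem to the continuous function $f \colon A \to \R$: it produces a function $\tilde F \in C(\X)$ with $\tilde F|_A = f$, but without any control on its range. (Since $A$ is closed in the compact space $\X$, it is itself compact, so $f$ is automatically bounded; this observation will not actually be needed.)

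Then I would set
\[
F \coloneqq \min\bigl(h, \max(g, \tilde F)\bigr),
\]
which, as noted just before the statement, belongs to $C(\X)$ because pointwise minima and maxima of continuous functions are continuous. It remains to verify the two required properties. For $x \in A$ the hypothesis $g(x) \le f(x) \le h(x)$ gives $\max\bigl(g(x), \tilde F(x)\bigr) = \max\bigl(g(x), f(x)\bigr) = f(x)$ and hence $\min\bigl(h(x), f(x)\bigr) = f(x)$, so $F|_A = f$. For an arbitrary $x \in \X$, the outer minimum yields $F(x) \le h(x)$, while $\max\bigl(g(x), \tilde F(x)\bigr) \ge g(x)$ combined with $h(x) \ge g(x)$ yields $F(x) \ge g(x)$; thus $g \le F \le h$ on all of $\X$. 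One may equivalently take $F = \max\bigl(g, \min(h, \tilde F)\bigr)$, which agrees with the above precisely because $g \le h$.

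I do not expect a genuine obstacle here: the only ingredient that is not entirely elementary is the classical Tietze extension theorem itself, which is standard, and the truncation is a one-line check. The one point that deserves a moment's attention is that the hypothesis $g \le h$ is genuinely used — it is exactly what ensures that the truncated function still agrees with $f$ on $A$ and that the lower bound $g \le F$ survives the outer minimum.
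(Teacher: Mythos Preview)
Your proposal is correct and follows exactly the approach the paper itself sketches: the paper does not give a formal proof but remarks just before the statement that the standard Tietze theorem comes without the bounds, ``but they can be included directly since $\min$ and $\max$ of two continuous functions are again continuous functions.'' Your reduction to the classical theorem followed by the truncation $F = \min\bigl(h, \max(g, \tilde F)\bigr)$ is precisely what is intended.
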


\bibliographystyle{abbrv}
\bibliography{./references_clean}
\end{document}